\newtheorem{theorem}{Theorem}
\newtheorem{lemma}[theorem]{Lemma}
\newtheorem{corollary}[theorem]{Corollary}
\newtheorem{prop}[theorem]{Proposition}
\newtheorem*{theorem*}{Theorem}
\newtheorem*{corollary*}{Corollary}
\theoremstyle{definition}
\newtheorem{example}[theorem]{Example}
\newtheorem{remark}[theorem]{Remark}
\newtheorem{definition}[theorem]{Definition}
\newtheorem*{remark*}{Remark}
\newtheorem{question}[theorem]{Question}
\newtheorem{observation}[theorem]{Observation}
\newtheorem*{definition*}{Definition}
\newtheorem*{example*}{Example}
\numberwithin{theorem}{section}
\newcommand{\BC}{\mathbb C} 
\newcommand{\BR}{\mathbb R} 
\newcommand{\BN}{\mathbb N} \newcommand{\BQ}{\mathbb Q}
 \newcommand{\BZ}{\mathbb Z}
 \newcommand{\BT}{\mathbb T}
\newcommand{\CC}{\mathcal C} \newcommand{\calD}{\mathcal D}
\newcommand{\CG}{\mathcal G} 
 \newcommand{\CL}{\mathcal L}
\newcommand{\CM}{\mathcal M} 
 \newcommand{\CP}{\mathcal P}
\newcommand{\CS}{\mathcal S} \newcommand{\CT}{\mathcal T}
\newcommand{\aut}{\textup{Aut}(F_n)}
\newcommand{\wt}{\widetilde}
\newcommand{\wh}{\widehat}
\newcommand{\nid}{\noindent}
\newcommand{\tup}{\textup}
\DeclareMathOperator{\trace}{Tr}
\DeclareMathOperator{\rank}{rank}
\DeclareMathOperator{\Mod}{\tup{Mod}(\Sigma)}
\DeclareMathOperator{\Modp}{\tup{Mod}(\Sigma,*)}
\newcommand{\comment}[1]{}
\patchcmd{\epigraph}{\@epitext{#1}}{\itshape\@epitext{#1}}{}{}
\begin{document}
\title  [Homological eigenvalues]  {Homological eigenvalues of lifts of pseudo-Anosov mapping classes to finite covers}
\author   {Asaf Hadari}
\date{\today}
\begin{abstract} Let $\Sigma$ be a compact orientable surface of finite type with at least one boundary component. Let $f \in \tup{Mod}(\Sigma)$ be a pseudo Anosov mapping class. We prove a conjecture of McMullen by showing that there exists a finite cover $\wt{\Sigma} \to \Sigma$ and a lift $\wt{f}$ of $f$ such that $\wt{f}_*: H_1(\wt{\Sigma}, \BZ) \to H_1(\wt{\Sigma}, \BZ)$ has an eigenvalue off the unit circle. \end{abstract}
\maketitle

\vspace {10mm}

\section{Introduction}

Let $\Sigma$ be a compact orientable surface and let $\Mod$ be its mapping class group - the group of isotopy classes of orientation preserving diffeormorphisms from $\Sigma$ to itself the fix the boundary point wise. The finite dimensional representation theory of $\Mod$ is  a nascent field of study. These groups have extensive collections of finite dimensional representations, but many basic questions remain mysterious. For instance for most $\Sigma$ it is not known whether or not $\Mod$ is a linear group.  

The largest known collection of representations are the \emph{homological representations} which are associated to finite covers $\pi: \Sigma' \to \Sigma$. The first of these, which is associated to the trivial cover, is the standard  homological representation $\tup{Mod}(\Sigma) \to \tup{GL}(H_1(\Sigma, \BQ))$ given by the induced action on first homology. The kernel of this representation is called the \emph{Torelli group}.

More generally, fix a point $* \in \Sigma$. Let $\Modp$ be the group of orientation preserving self diffeomorphisms of the pair $(\Sigma, *)$ that fix the boundary point wise, mod isotopies that fix $*$ and the boundary point wise. The group $\Modp$ acts on $\pi_1(\Sigma, *)$ by automorphisms.  

Let $K < \pi_1(\Sigma, *)$ be a finite index subgroup, and let $\pi: \Sigma' \to \Sigma$ be the associated finite cover. Let $G_K = \{f \in \Modp | f(K) = K \}$. The group $G_K$ is a finite index subgroup of $\Modp$. We have a natural map $\rho_K: G_K \to \tup{GL}(H_1(K; \BZ))$. Topologically, every element $f \in G_K$ can be lifted to a diffeomorphism $f': \Sigma' \to \Sigma'$. The diffeomorphism $f'$ induces a map $f'_*: H_1(\Sigma'; \BQ) \to H_1(\Sigma'; \BQ)$. The transformation $f'_*$ is $\rho_K(f')$.  

The representations $\rho_K$ are called homological representations. They have been studied extensively by many authors. For example: Gr\"unewald, Larsen, Lubotzky and Malestein use these representation to construct several different infinite families of arithmetic quotients of mapping class groups (see \cite{GLLM}). In \cite{PW}, Putman and Wieland exhibit a connection between properties of homological representations and the virtual first betti number of $\Mod$. In work of Lubotzky and Meiri, and separately in work of Malestein and Souto these representation were used to describe generic properties of random elements of $\Mod$ (see \cite{LMeiri}, \cite{LMeiri2}, \cite{MalesSouto}). 

In addition to providing information about the group $\Mod$ as a whole, these representations also provide in formation about individual elements. For example, Koberda and later Koberda and Mangahas showed that the family of homological representations can detect the Nielsen-Thurston classification of a mapping class (\cite{Kober}, \cite{KoberMang}). 

It is natural to try to understand whether or not the topological invariants associated to a mapping class $f$ can be recovered from its homological representations. McMullen studied this question for pseudo Anosov mapping classes by considering the following invariant. Fix $f \in \Modp$, a pseudo-Anosov mapping class. Given a finite index subgroup $K < \pi_1(\Sigma, *)$, let $\sigma_K$ be the spectral radius of the operator $\rho_K(f)$ (that is - the modulus of its largest eigenvalue). It is a simple exercise to show that $\sigma_K$ is at least $1$ and at most $\lambda$ - the dilatation of $f$. In \cite{Mcm}, McMullen shows that if the invariant foliations of $f$ have a singularity with an odd number of prongs, then $\sup \sigma_K < \lambda $, where the supremum is taken over all finite index subgroups $K$. McMullen asked the following question, whose positive resolution has become a well known conjecture. 

\begin{question}
In the notation above, is $\sup \sigma_K >1$?
\end{question}

In a previous paper (\cite{inford}) we provided evidence for this conjecture by proving the following. 
\begin{theorem} (\textup{(Hadari)})
Suppose that $\Sigma$ has at least one boundary component. Then for any infinite order element $f \in \Mod$, there is a finite cover $\pi: \Sigma' \to \Sigma$ to which $f$ lifts to a map $f'$, such that $f'_*: H_1(\Sigma'; \BQ) \to H_1(\Sigma'; \BQ)$ has infinite order. 
\end{theorem}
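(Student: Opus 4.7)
Choosing a basepoint $* \in \partial \Sigma$ (which mapping classes fix), the theorem becomes a statement about the free group $F_n = \pi_1(\Sigma,*)$: if $\phi \in \aut$ has infinite order, then there is a characteristic finite-index subgroup $K \le F_n$ so that the induced action on $H_1(K,\BQ)$ has infinite order. Working with $\phi$-invariant \emph{characteristic} subgroups is convenient because then any $\phi$ automatically lifts.

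\textbf{Reduction to the IA case.} If $\phi$ already acts on $H_1(F_n, \BZ) = \BZ^n$ with infinite order, take $K = F_n$ and we are done. Otherwise some power of $\phi$ lies in $\IAn$; replace $f$ by this power (still infinite order in $\Mod$) and assume $\phi \in \IAn \setminus \{e\}$.

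\textbf{Detection via nilpotent $p$-covers.} The Andreadakis--Johnson filtration $\IAn = \CI(1) \supset \CI(2) \supset \cdots$, defined by $\CI(k) = \{\phi : \phi(x)x^{-1} \in \gamma_{k+1}(F_n) \text{ for all } x\}$, has trivial intersection because $F_n$ is residually nilpotent. So $\phi \in \CI(k) \setminus \CI(k+1)$ for some $k$, yielding a nonzero Johnson invariant $\tau_k(\phi) \in \Hom\bigl(F_n^{\tup{ab}}, \gamma_{k+1}(F_n)/\gamma_{k+2}(F_n)\bigr)$. For each prime $p$, let $K_p := \gamma_{k+1}(F_n) \cdot F_n^p$, a characteristic finite-index subgroup with quotient a finite $p$-group $G_p$. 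Then $\phi$ lifts to the corresponding cover $\Sigma_p$, and the induced map $\phi_{*,p}$ on $H_1(\Sigma_p, \BQ)$ is computable via Fox calculus: as a $\BQ[G_p]$-module, $H_1(\Sigma_p, \BQ)$ is presented using the Jacobian of Fox derivatives, and $\phi_{*,p}$ acts by the matrix $\bigl(\overline{\partial \phi(x_i) / \partial x_j}\bigr)_{i,j}$ with entries in $\BQ[G_p]$. Decomposing $H_1(\Sigma_p, \BQ)$ into isotypic components under $G_p$, the map $\phi_{*,p}$ permutes the isotypics by a finite permutation (arising from $\phi$'s action on characters of $G_p$), so any infinite order must arise within a single isotypic block.

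\textbf{The main difficulty.} The central obstacle is turning \emph{non-triviality} of $\tau_k(\phi)$ into \emph{infinite order} of $\phi_{*,p}$. Non-vanishing of the Johnson invariant only guarantees that the Fox Jacobian differs from the identity; it does not rule out that all eigenvalues of $\phi_{*,p}$ are roots of unity and that no non-trivial Jordan blocks occur at those roots. I would attempt this by taking $p$ large and using congruence constraints: the entries of the Fox Jacobian are determined modulo $\gamma_{k+2}(F_n)$ by $\tau_k(\phi)$, and for $p$ larger than a bound depending on $\tau_k(\phi)$, the characteristic polynomial of $\phi_{*,p}$ restricted to a suitably chosen isotypic component should fail to be a product of cyclotomic factors. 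The primary technical burden is making this ``restriction to an isotypic'' precise and ensuring the Johnson data survives the restriction; this is where I would expect most of the work of a full proof to lie. A parallel approach worth trying is to pass to the mapping torus $M_f$ and detect infinite order of $\phi_{*,p}$ through twisted Alexander polynomials of finite covers of $M_f$, invoking 3-manifold invariants to circumvent the direct Fox-calculus estimates.
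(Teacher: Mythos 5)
The theorem you are asked to prove is cited in the present paper from \cite{inford} and is not reproved here, so there is no internal proof to compare against line by line; but your attempt can be judged on its own terms, and on those terms it is incomplete in exactly the place you yourself flag.

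Your reductions are sound: choosing a boundary basepoint to get an honest automorphism of $F_n$, passing to a power in $\IAn$ when the ordinary $H_1$-action has finite order, and using residual nilpotence of $F_n$ so that the Johnson filtration $\CI(k)$ separates $\IAn$. The choice of characteristic targets $K_p = \gamma_{k+1}(F_n)\cdot F_n^p$ and the use of Fox calculus to describe $H_1(\Sigma_p,\BQ)$ as a $\BQ[G_p]$-module is also the right general toolkit; it is, in spirit, the same apparatus (nilpotent covers, Magnus/Fox matrices) that the present paper builds out in Sections~\ref{magnus}--\ref{stable}.

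The gap is the one you name as ``the main difficulty,'' and it is not a technical loose end but the entire content of the theorem. Nonvanishing of $\tau_k(\phi)$ tells you only the leading term of $\phi(x_i)x_i^{-1}$ in the free Lie algebra; it gives no control over the eigenvalues of the Fox Jacobian in $\BQ[G_p]$, nor over the semisimplicity of $\phi_{*,p}$. An operator can be visibly nontrivial in every nilpotent quotient and still be of finite order on every $H_1(\Sigma_p,\BQ)$ --- for instance, a diagonalizable operator whose spectrum is a set of roots of unity permuted among isotypic pieces. Your proposed fix, ``for $p$ larger than a bound depending on $\tau_k(\phi)$ the characteristic polynomial should fail to be a product of cyclotomic factors,'' is an assertion, not an argument: cyclotomicity is not an open condition in any useful sense, and a single graded piece of the filtration does not bound the characteristic polynomial from below. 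Nothing in the Andreadakis--Johnson data rules out the bad case by cardinality alone. The parallel suggestion via twisted Alexander polynomials has the same status: naming a different invariant does not explain why the Johnson data forces its nonvanishing.

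What is actually needed is a \emph{structural} mechanism that converts $\tau_k(\phi)\neq 0$ into either an eigenvalue off the unit circle or a nontrivial Jordan block in some finite cover. That is precisely the work the present paper does (for the stronger eigenvalue statement), via the equivariant Magnus matrix, the transition graph with its vertex subgraphs, and the nilpotent-stabilization machinery of Lemmas~\ref{nilstab}, \ref{upgrade}, and~\ref{stabilization}, together with the $L^2$-trace and anchoring Lemmas~\ref{l2trace} and~\ref{anchoring}. Your reductions survive and your framework is not wrong, but the theorem is not proved until the step you have left as a heuristic is actually carried out.
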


\nid In this paper, we use a strategy inspired by the proof in \cite{in ford} to provide the following answer to McMullen's question. 

\begin{theorem} \label{theorem1}
Suppose that $\Sigma$ has at least one boundary component. Then for any $f \in \Mod$ with positive topological entropy, there exists a regular finite cover $\pi: \Sigma' \to \Sigma$ to which $f$ lifts to a map $f'$, such that $f'_*: H_1(\Sigma'; \BQ) \to H_1(\Sigma'; \BQ)$ has eigenvalues off of the unit circle. Furthermore, if $f$ is pseudo-Anosov, this cover can be taken to have a solvable deck group. 

\end{theorem}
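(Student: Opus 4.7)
The plan is to build on the previous theorem from \cite{inford}, which already produces, for any infinite-order $f$, a finite cover whose lift acts on rational homology with infinite order. The task here is to upgrade ``infinite order'' to ``eigenvalue off the unit circle.'' An integer matrix of infinite order either has an eigenvalue off the unit circle, in which case we are done, or has all eigenvalues roots of unity with a non-trivial Jordan block at some root of unity; only the latter case needs additional work.

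First I would reduce to the pseudo-Anosov case. If $f$ has positive topological entropy but is reducible, the Nielsen--Thurston decomposition produces a pseudo-Anosov component on an $f$-invariant subsurface $\Sigma_0 \subset \Sigma$, which inherits a boundary component. A regular cover $\Sigma_0' \to \Sigma_0$ solving the problem for the restriction can be pushed up to a regular cover $\Sigma' \to \Sigma$ by taking the normal closure in $\pi_1(\Sigma)$ of the associated subgroup; the off-circle eigenvalue on $H_1(\Sigma_0'; \BQ)$ persists on $H_1(\Sigma'; \BQ)$ because subsurface homology sits equivariantly inside the cover's homology. So henceforth assume $f$ is pseudo-Anosov with dilatation $\lambda > 1$.

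For the pseudo-Anosov case with solvable deck group, the approach is an iterated abelian-cover construction. Starting from $f$, the previous theorem (whose covers can be chosen characteristic, hence with abelian deck group) supplies a finite regular abelian cover $\Sigma_1 \to \Sigma$ with lift $f_1$ whose action $(f_1)_*$ has infinite order. If $(f_1)_*$ has an off-circle eigenvalue, we are done. Otherwise some power satisfies $(f_1)_*^N = I + U$ with $U$ a nonzero nilpotent integer matrix, giving classes $v, w \in H_1(\Sigma_1;\BZ)$ with $w \neq 0$, $(f_1)_*^N w = w$, and $(f_1)_*^N v = v + w$. The key step is to form a $p$-cyclic cover $\Sigma_2 \to \Sigma_1$ from a character $\chi : H_1(\Sigma_1;\BZ) \to \BZ/p$ pairing non-trivially with $w$: since $w$ is $(f_1)_*^N$-fixed, $\chi$ is preserved up to automorphisms of $\BZ/p$, so a power of $f_1$ lifts to $\Sigma_2$, and its homological action acquires a block intertwining the deck-group Galois action with the unipotent shift $v \mapsto v + w$, whose eigenvalues one aims to show lie off the unit circle. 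Iterating produces a tower of abelian covers; the composite is a regular cover of $\Sigma$ with solvable deck group after passing to normal closures at each step.

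The main obstacle is to show that this process actually terminates --- that the spectral upgrade from a unipotent block to a genuine off-circle eigenvalue occurs, rather than the iteration only producing ever-larger Jordan blocks at roots of unity. This is precisely where the pseudo-Anosov hypothesis, as opposed to bare infinite order, must enter. The dilatation $\lambda > 1$ supplies exponential dynamical expansion on the universal cover, which one must convert into an algebraic eigenvalue of modulus $> 1$ on a sufficiently deep cover. The natural route is to use a train-track representative of $f$ to construct explicit homology classes on some $\Sigma_k$ whose iterates grow comparably to $\lambda^{kn}$, thereby lower-bounding the spectral radius of the lift. Making this growth estimate precise, and controlling the number of iterations needed, is the crux.
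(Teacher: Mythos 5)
Your proposal has two concrete problems, in addition to not actually proving the hard part.

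First, the reduction from positive-entropy $f$ to the pseudo-Anosov case is flawed. You write that a cover $\Sigma_0' \to \Sigma_0$ can be pushed up to a regular cover of $\Sigma$ ``by taking the normal closure in $\pi_1(\Sigma)$ of the associated subgroup.'' But $\pi_1(\Sigma_0)$ has infinite index in $\pi_1(\Sigma)$ (both are free), so a finite-index subgroup $N_0 < \pi_1(\Sigma_0)$ also has infinite index in $\pi_1(\Sigma)$, and its normal closure in $\pi_1(\Sigma)$ need not be of finite index (for instance, the normal closure of $\langle a^2\rangle$ in $F_2 = \langle a,b\rangle$ has quotient $\BZ/2 * \BZ$). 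Even setting that aside, a larger normal subgroup corresponds to a \emph{smaller} cover, which is the wrong direction. The paper instead invokes Marshall Hall's theorem: there is a finite-index $K < \pi_1(\Sigma)$ in which $\pi_1(\Sigma_0)$ sits as a free factor, so $H_1(\Sigma_0;\BQ)$ is a direct summand of $H_1(K;\BQ)$ and the off-circle eigenvalue persists. Relatedly, your parenthetical ``the covers can be chosen characteristic, hence with abelian deck group'' is false --- characteristic does not imply abelian, and the covers produced in \cite{inford} are not abelian.

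Second, and more seriously, for the pseudo-Anosov case you sketch an iterated tower of abelian covers where a $p$-cyclic cover is supposed to convert a unipotent Jordan block into an off-circle eigenvalue, and then you openly state that ``making this growth estimate precise\dots{} is the crux.'' That crux is the entire theorem; nothing in the proposal establishes it, and the paper's argument bears no resemblance to this tower-of-unipotent-blocks picture. The paper instead introduces the equivariant Magnus matrix $A_f$ over the group ring $\BC[H_f]$, proves the anchoring lemma and $L^2$-trace lemma (Lemmas \ref{anchoring} and \ref{l2trace}) reducing the problem to making $\trace(A_f^k)$ large in an $L^2$ sense, encodes cancellation in the transition graph and the polytope $\CS^e\varphi$ whose dimension is controlled by the Thurston norm via Fried's theorem (Lemma \ref{dimshad}), and then uses nilpotent quotients to stabilize vertices of $\CS^e\varphi$ (Lemmas \ref{nilstab}, \ref{upgrade}, \ref{stabilization}) before running a quantitative argument with large-degree cyclic covers (Lemmas \ref{multiplicity}, \ref{polytopelattice}, \ref{unbounded}). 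None of these ingredients appear in your outline, and your proposed mechanism --- that the deck-group Galois action on a $p$-cyclic cover will interact with a unipotent shift to produce an off-circle eigenvalue --- is neither proved nor obviously true: the Galois action and a unipotent shift can both preserve the unit circle. You would need some substitute for the paper's trace-growth machinery to make this step work, and the proposal does not supply one.
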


\nid We also provide an analogous result for automorphisms of free groups. 

\begin{theorem} \label{theorem2} Let $n \geq 2$ and let $\overline{f} \in \tup{Out}(F_n)$ be a fully irreducible automorphism. Then there exists a representative $f$ of $\overline{f}$ and finite index subgroup $K \lhd F_n$ such that $f(K) = K$, and $f_*: H_1(K;\BQ) \to H_1(K; \BQ)$ has eigenvalues off of the unit circle. The subgroup $K$ can be taken such that $F_n/K$ is solvable.  

\end{theorem}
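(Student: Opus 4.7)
My plan is to mirror the strategy that will prove Theorem \ref{theorem1}, replacing the Thurston train track of a pseudo-Anosov mapping class by the Bestvina--Handel train track of a fully irreducible outer automorphism. First, I would use Bestvina--Handel theory to choose a train-track representative $f: G \to G$ of $\overline{f}$ on a finite graph $G$ with $\pi_1(G, v_0) \cong F_n$, where $v_0$ is a vertex fixed by $f$; this furnishes an honest element of $\tup{Aut}(F_n)$ representing $\overline{f}$. The transition matrix $M$ of $f$ on the oriented edges of $G$ is a non-negative integer matrix whose Perron--Frobenius leading eigenvalue is the expansion factor $\lambda > 1$. This $\lambda$ plays the role of the pseudo-Anosov dilatation in the surface proof.

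Next I would construct $K$ iteratively, as a tower $F_n = K_0 > K_1 > \dots > K_N = K$ of finite-index $f$-invariant normal subgroups with abelian successive quotients $K_i/K_{i+1}$. Since one abelian layer is added at each step, the final quotient $F_n/K$ is solvable; since each $K_i$ is $f$-invariant and we are selecting lifts consistently, the resulting representative satisfies $f(K)=K$. At each stage $K_{i+1}$ is chosen by restricting to $f$-stable sublattices in $(K_i)_{ab}\otimes \BZ/m$ for a suitable prime power $m$, so the machinery of the previous layer can be reapplied to the next.

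The spectral engine is twisted homology on graphs. For a finite abelian cover $G_{i+1} \to G_i$ with deck group $D$, the rational homology splits as $\bigoplus_{\chi \in \wh{D}} H_1(G_i; \CL_\chi)$ under the characters of $D$, where $\CL_\chi$ is the rank-one local system associated to $\chi$, and the lift of $f$ permutes these blocks according to $\overline{f}$'s action on $\wh{D}$. On each $f$-orbit of characters the lift acts by a companion-type block whose spectral radius is an appropriate root of that of the product around the orbit. I would then argue by continuity: as a finite-order character $\chi$ varies, the twisted transition matrix $M_\chi$ depends continuously on $\chi$, and as $\chi$ approaches the trivial character one has $M_\chi \to M$. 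Lower semicontinuity of the spectral radius, together with the Perron--Frobenius eigenvalue $\lambda>1$ of $M$, then forces $M_\chi$ to have spectral radius exceeding $1$ for $\chi$ sufficiently close to trivial; selecting such a $\chi$ of finite order yields an eigenvalue of $f_*|_{H_1(K;\BQ)}$ off the unit circle.

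The principal obstacle is the interplay between $\overline{f}$-invariance of the cover and the finite-order requirement on the chosen character. A generic character of $H_1$ need not be $\overline{f}$-invariant, so the supply of admissible characters at the bottom level is constrained by the eigenstructure of $\overline{f}$ acting on $H_1(F_n;\BQ)$, and one cannot simply pick an arbitrary character near the trivial one. Ascending the tower enlarges $H_1(K_i;\BQ)$ and enriches the supply of invariant or orbit-closed finite-order characters, and the technical heart of the argument will consist in showing that the tower can always be extended until some level produces a twisted Burau-type matrix with spectral radius $>1$. This is where the fully irreducible hypothesis enters essentially, as it is what guarantees the Perron--Frobenius structure of $M$ and thus underwrites the continuity argument above.
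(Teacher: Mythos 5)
Your proposal has a genuine gap at the center of the spectral argument, and it is precisely the difficulty that forces the paper's elaborate machinery. You write that the twisted transition matrix $M_\chi$ converges to the non-negative transition matrix $M$ (with Perron--Frobenius eigenvalue $\lambda>1$) as $\chi$ tends to the trivial character, and then invoke lower semicontinuity of the spectral radius. But the matrix that acts on the twisted chain group $C_1(G;\CL_\chi)$ --- and hence on $H_1(G;\CL_\chi)$ --- carries \emph{signs}: its $(i,j)$ entry is $\sum_\eta \textbf{s}(\eta)\,\chi(\textbf{t}(\eta))$ over edge-crossings $\eta$, not the unsigned count. As $\chi\to 1$ this specializes to the \emph{signed} transition matrix, which is exactly $\varphi_*$ acting on $C_1(G;\BC)$. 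That operator restricts to $f_*$ on $H_1(G;\BC)$ and acts on the quotient $C_1/H_1$ through the permutation action of $\varphi$ on vertices; so in the only interesting case, when all eigenvalues of $f_*$ are roots of unity, the signed transition matrix itself has spectral radius $1$, not $\lambda$. The limit you need is therefore $1$, and continuity gives you nothing. Conflating the unsigned Perron--Frobenius matrix with the signed specialization $A_f(1)$ is the source of the error; the two can differ dramatically because of cancellation in $\varphi(e)$ (this is exactly the cancellation discussed in the paper's Section on stabilizing vertex subgraphs).

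There is also a secondary obstacle you gesture at but do not resolve: a character $\chi$ can only yield an $f$-invariant cover if it factors through $H_f=\tup{coker}(I-f_*)$, whose rank (after replacing $f$ by a power so that $f_*$ is unipotent) equals the dimension of the $1$-eigenspace and can be much smaller than $\tup{rank}\,H_1(F_n)$. So even if a continuity argument were available, the admissible $\chi$ would be confined to a small subtorus near $1$. The paper's proof sidesteps both issues: instead of taking $\chi$ near $1$, it uses the $L^2$-trace lemma (Plancherel) and the anchoring lemma to detect \emph{some} finite-order $\chi$ where the specialized trace is large, and the hard part of the proof --- the transition graph, the equivariant shadow polytope $\CS^e\varphi$, nilpotent stabilization of its vertices, and the cyclic-cover inflation of the trace --- is all aimed at manufacturing a cover in which that $L^2$ criterion holds. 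Your tower-of-abelian-covers scaffolding and solvability bookkeeping are consistent with the paper, but the engine driving the spectral conclusion must be replaced.
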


Note that for any surface $\Sigma$, there is a natural map $\Mod \to \tup{Out}(\pi_1(\Sigma))$. In our statement of Theorem \ref{theorem1}, we restrict ourselves to surfaces with boundary. These surfaces have free fundamental groups. Given a pseudo-Anosov mapping class on a surface with boundary, the surface $\Sigma$ is homotopy equivalent to an invariant train track graph $\Gamma \subset \Sigma$, and the map $\overline{f}$ is induced by a continuous function $\varphi: \Gamma \to \Gamma$. The map $\varphi$ fixes some point $* \in \Gamma$. Taking this point to be out base point, we  get a representative of $\overline{f}$ called a \emph{train track representative}. A similar notion exists for fully irreducible automorphisms (see \cite{FLP} for surfaces, and \cite{BeH} for $\tup{Out}(F_n)$). We will deduce theorems \ref{theorem1} and \ref{theorem2} from the following theorem, whose proof will take up the majority of this paper. 

\begin{theorem} \label{theorem3} Let $n \geq 2$ and let $\overline{f} \in \tup{Out}(F_n)$ be either a fully irreducible automorphism or the image of a pseduo-Anosov mapping class. Let $f$ be a train track representative of $\overline{f}$. Then there exists a finite index subgroup $K \lhd F_n$ such that $f(K) = K$ and $f_*: H_1(K;\BZ) \to H_1(K; \BZ)$ has eigenvalues off of the unit circle. Furthermore, we can choose $K$ such that $F_n/K$ is solvable.  
\end{theorem}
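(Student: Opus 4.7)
The plan is to build $K$ as a characteristic abelian (or iterated abelian) quotient of $F_n$ and analyze the action of $f$ on $H_1(K;\BC)$ via twisted homology on the corresponding finite cover of the train track graph. Because iterated abelian quotients are solvable, the solvability condition on $F_n/K$ will come for free from such a construction.

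Let $\Gamma$ be the train track graph with $\pi_1(\Gamma,*)=F_n$, and let $T$ be the non-negative integer transition matrix of $f$ on the edges of $\Gamma$. Under the stated hypotheses on $\overline f$, the matrix $T$ is primitive with Perron--Frobenius eigenvalue $\lambda>1$. For each $d\geq 1$, set $A_d=(\BZ/d)^n$ and let $K_d$ be the kernel of $F_n\twoheadrightarrow F_n^{\tup{ab}}\twoheadrightarrow A_d$. Being characteristic in $F_n$, $K_d$ is automatically $f$-invariant, and the action of $f$ on the associated cover $\Gamma_d$ decomposes the complex first homology as
\[
H_1(\Gamma_d;\BC)=\bigoplus_{\chi\in\wh{A_d}} V_\chi,
\]
with $f_*$ permuting isotypic summands via $\chi\mapsto\chi\circ f^{\tup{ab}}$. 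Replacing $f$ by a suitable iterate $f^N$, one may assume that $f^N$ fixes a chosen character $\chi$; then $f^N_*|_{V_\chi}$ is computed by a \emph{twisted transition matrix} $T_\chi^{(N)}$, obtained by weighting each edge occurrence in the $N$-th iterated train track image by $\chi$ evaluated on the deck transformation needed to close up the corresponding path. The family $T_\chi^{(N)}$ depends algebraically on $\chi\in(\BC^*)^n$.

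The technical heart is to produce a torsion character $\chi$ for which $T_\chi^{(N)}$ has an eigenvalue off the unit circle; taking a corresponding $N$-th root then produces an eigenvalue of $f_*$ on $H_1(K_d;\BC)$ off the unit circle. At the trivial character $\chi=1$ the matrix $T_\chi^{(N)}$ reduces to the $N$-th power of the abelianized action on $H_1(F_n;\BC)$, whose eigenvalues may all lie on the unit circle (for example when $f\in\IAn$), so a naive continuity argument at $\chi=1$ is insufficient. Instead, one aims to show that the cancellations that collapse $T$ down to its abelianization do not survive a nontrivial twist: the spectral radius of $T_\chi^{(N)}$ should tend to $\lambda^N$ at a generic $\chi$, after which density of torsion characters and semicontinuity of eigenvalues forces some torsion $\chi_0$ to give $T_{\chi_0}^{(N)}$ of spectral radius exceeding $1$. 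Such an estimate is most naturally obtained via a non-abelian Perron--Frobenius analysis of the lifted transition operator of $\tilde f$ on the universal abelian cover $\Gamma^{\tup{ab}}$, leveraging the irreducibility input supplied by the fully irreducible (or pseudo-Anosov) hypothesis on $\overline f$; this is where the strategy of \cite{inford} should be adapted.

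The main obstacle is exactly this spectral step. If a single abelian stage is insufficient — that is, if all twisted eigenvalues remain on the unit circle for every torsion $\chi$ factoring through some $A_d$ — the argument is iterated by applying the same construction to the train track map induced on $\Gamma_d$, producing a tower of characteristic abelian covers. Each stage preserves characteristicness, hence $f$-invariance, and contributes an abelian layer to the deck group, so $F_n/K$ remains solvable throughout. The crux is termination: proving either that a single twist already gives an eigenvalue off the unit circle, or that the tower must stabilize at a finite depth where such an eigenvalue appears. I expect the majority of the paper's work to go into executing this spectral argument, since everything else — the decomposition into character spaces, the passage to iterates to fix characters, and the preservation of solvability — is formal.
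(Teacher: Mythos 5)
You correctly identify the overall architecture the paper uses: a tower of characteristic, iterated-abelian covers (so $F_n/K$ is automatically solvable), the decomposition of twisted homology into character summands, and the goal of finding a torsion character at which the twisted transition matrix has an eigenvalue off the unit circle. This matches the paper's framework, which encodes the twisted matrices into a single equivariant Magnus matrix $A_f$ over $\BC[H_f]$ with $H_f \cong \tup{coker}(I - f_*)$, and relates its specializations at finite-order characters to the homological action on abelian covers. You also correctly locate the crux in the spectral step. But the mechanism you propose for that step is provably false, and in a way the paper itself flags. You assert that $\rho(T_\chi^{(N)})$ should tend to $\lambda^N$ at generic $\chi$ and then invoke density of torsion characters and semicontinuity. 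McMullen's theorem, cited in the introduction, says that when the invariant foliations of $f$ have an odd-pronged singularity, $\sup_K \sigma_K < \lambda$ over all finite-index $K$. The twisted matrix $T_\chi^{(N)}$ for torsion $\chi$ is a block of the action on a finite cover, so $\rho(T_\chi^{(N)}) \leq c^N$ with $c < \lambda$ for every torsion $\chi$; by the very density-and-continuity argument you invoke, this bound propagates to \emph{all} $\chi$ on the unitary torus. There is no generic Perron--Frobenius blow-up to exploit, and this is precisely why the problem is hard rather than formal.

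The paper's actual mechanism works with traces, not spectral radii, and this is the missing idea. It controls $\trace(A_f^k)$ as an element of the group ring $\BC[H_f]$. The $L^2$-trace lemma (Lemma \ref{l2trace}, via Plancherel) and the anchoring lemma (Lemma \ref{anchoring}) reduce the problem to making the $L^2$ norm, or a lattice-averaged sum, of this trace exceed $m = \#E(\Gamma)$; once it does, some torsion specialization has $|\trace|>m$ and hence an eigenvalue off the unit circle. The extreme terms of the trace are governed by the vertices of the equivariant shadow polytope $\CS^e\varphi$, and the obstruction you gesture at --- ``the cancellations that collapse $T$ down to its abelianization'' --- appears concretely as signed cancellation at those vertices. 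The paper defeats it with nilpotent stabilization (Lemma \ref{nilstab}, using the torsion-free lower central series of $F_n \rtimes_f \BZ$), followed by an upgrade/deformation tower (Lemmas \ref{upgrade}, \ref{cyclic}, \ref{multiplicity}, \ref{polytopelattice}, \ref{positivevertices}, \ref{unbounded}) that progressively increases the lattice-averaged or $L^2$ trace past the dimension bound and, crucially, terminates. The character decomposition and iteration that you dismiss as ``formal'' are where nearly all the work lives.
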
 

\begin{remark}
As we were concluding the writing of this paper, Yi Liu also published an independent proof of McMullen's conjecture. Like the proof in this paper, his proof is to some extent inspired by our proof in \cite{inford} but aside from this initial inspiration the two proofs are very different. The end results are also somewhat different. Liu's proof covers the case of closed surfaces, which the proof in this paper does not (our proof fails for closed surfaces in exactly one spot - Lemma \ref{nilstab}). The proof in this paper covers the $\tup{Out}(F_n)$ case, which Liu's does not, and provides the extra information that the finite cover can be taken to be solvable. 
\end{remark}

\subsection{Strategy and organization of the proof} 
Theorem \ref{theorem3} is non-trivial only when all of the eigenvalues of $f_*$ are roots of unity. By replacing $f$ with a power of itself, we can assume that all of its eigenvalues are $1$, and in particular it has a $1$-eigenspace. 

To such an autmorphism we introduce a matrix $A_f$ which we call the \emph{equivariant Magnus matrix}. It is related to the Magnus representation of $f$ (see \cite{mag}, \cite{mag2} for definitions). The entries of this matrix are polynomials, which we view as elements of the group ring of some quotient $H_f$ of $H_1(F_n, \BZ)$.  

Given a matrix whose entries are polynomials in the variables $X^{\pm 1}_1, \ldots, X^{\pm 1}_m$, we can substitute numbers $\xi_1, \ldots, \xi_m$ for $X_1, \ldots, X_m$ to get a matrix with entries in $\BC$. This is called the \emph{specialization of the matrix at $\xi_1, \ldots, \xi_m$}. The equivariant Magnus matrix has the property that its specialization at roots of unity contain information $\rho_K(f)$ for a certain collection of abelian covers $K$. One particular connection is that if we specialize $A_f$ at roots of unity and get a matrix that has eigenvalues off of the unit circle, then $\rho_K(f)$ has eigenvalues off of the unit circle for some abelian cover $K$.

We now face the question of having to tell when a matrix with polynomial coefficients has a specialization at roots of unity with eigenvalues off of the unit circle. One possible answer is to look at the trace of such a matrix. If the trace of the matrix is in some sense large (say if the $L^2$ norm of its coefficients is greater than the dimension of the matrix) then it is possible to use the Fourier transform on abelian groups to find a specialization as required.  

In Section \ref{magnus} we introduce the equivariant Magnus matrix, discuss the connection between its stabilizations and the homological representations of $f$, and provide two criteria for finding a specialization that has eigenvalues off of the unit circle. The remainder of proof shows how to find a sequence of covers where the trace of the equivariant Magnus matrices becomes larger and larger in the sense of Section \ref{magnus}.  

In Section \ref{transition} we introduce a combinatorial object called the \emph{transition graph} which encodes a great deal of information about the $f$. In particular, attached to this transition graph is a convex polygon $\CS^e \varphi \subset H_f \otimes \BR$, which we call the \emph{equivariant shadow}, and which which we use extensively in our proof. This polygon is related to the norm ball of the Thurston norm (this is explained in Lemma \ref{dimshad}). 

Morally speaking, we expect the convex hull of the support of $\trace A_f$ to be some homothetic image of the polygon $\CS^e \varphi$ and when this is the case, $\trace A_f$ is large in the sense of Section \ref{magnus}. When this is not the case, it is due to some cancellation occurring at the vertices of this polygon. In Section \ref{stable} we discuss this cancellation, and show that for every vertex it is possible to find a nilpotent cover where it does not cancel. Finally, in Section \ref{proofs} we collect several important technical lemmas, and complete the proof of our Theorems \ref{theorem1}, \ref{theorem2}, \ref{theorem3}.

\section{The Magnus matrix and its specializations} \label{magnus}

In this section we introduce a key concept: the equivariant Magnus matrix, and two lemmas: the anchoring lemma and the $L^2$-trace lemma that will be our central tools in proving Theorem \ref{theorem3}. 

Throughout the section, let $f \in \tup{Aut}(F_n)$ be a train track representative of a pseudo-Ansov mapping class or a fully irreducible automophism. Suppose that $f$ is induced by the continuous map $\varphi: \Gamma \to \Gamma$, fixing the base point $*$.

\subsection{The $f$-equivariant torsion free universal abelian cover} 
Let $G = F_n \rtimes_f \BZ$. Consider the endomorphism $i: F_n \to G$ given by $i(w) = (w,0)$.
\begin{definition} Let $h: G \to H_1(G;\BQ)$ be the natural map, and let $U_f = h\circ i$. Let $\wt{\Gamma}_f$ be the cover corresponding to $U_f$. We call $\wt{\Gamma}_f$ the \emph{$f$-equivariant torsion free universal abelian cover of $\Gamma$}. 
\end{definition}

\nid We begin by making several simple observations. Since $[F_n, F_n] < U_f$, we have that $H_f = F_n / U_f$ is a finitely generated abelian group. Since $[G, G] \lhd G$, the definition of semidirect products gives that $f(U_f) = U_f$. Thus $f$ acts on the group $H_f$. 

There is an $f$-equivariant isomorphism between $H_f$ and the image of $F_n$ in $H_1(G; \BQ) \cong G_{ab} \otimes \BQ$, where the action of $f$ on $G$ is given by conjugation.  Since conjugation acts trivially on $G_{ab}$, we get that $f$ acts trivially on $H_f$. 
Let $f_*$ denote the automorphism induced by $f$ on H = $H_1(F_n, \BZ)$. The group $H_f$ is an abelian quotient of $F_n$ and is thus a quotient of $H$. Indeed, we have the natural identification $H_f \cong \tup{coker}(I_n - f_*)$.  Note that since $G_{ab} \otimes \BQ$ is torsion free, then so is $H_f$.

\subsection{The equivariant Magnus matrix of $f$} \label{magnusmatrix} Let $V_f = C_1(\wt{\Gamma}_f, \BC)$ be the space of simplicial $1$-chains with coefficients in $\BC$ in the cover $\wt{\Gamma}_f$. 

The group $H_f$ acts on $\wt{\Gamma}_f$ by deck transformations and thus permutes the edges of $\wt{\Gamma}_f$. This gives $V_f$ the structure of a $H_f$-module. 

Pick a spanning tree $T$ of $\Gamma$ and let $\wt{T}$ be a lift of $T$ to a tree in $\wt{\Gamma}_f$. For any vertex $v$ of $\Gamma$, let $\wt{v}$ be the lift of $v$ incident at $\wt{T}$. The action of the group $H_f$ on $\wt{\Gamma}_f$ by deck transformations gives a transitive permutation on the set of all pre images of $v$. By identifying $\wt{v}$ with the element $0 \in H_f$, we can identify every pre image of $v$ with an element of $H_f$.

 Given any oriented edge $\eta$ of $\Gamma$, the choice of the lift $\wt{T}$ gives a bijection $\iota_\eta$ between the collection of lifts of $\eta$ in $\wt{\Gamma}_f$ and $H_f$ given by reading off the label of the origin vertex of a lift. This identification induces a $H_f$-module isomorphism: 
$$(\BC[H_f])^{E(\Gamma)} \cong V_f $$

If we set $m = \#E(\Gamma$) then the above isomorphism is is given by $$(\sum a_{1,i} h_{1,i}, \ldots, \sum a_{m,i} h_{m,i} ) \to \sum_{j=1}^m \sum a_{j,i} \iota_{\eta_i} (h_{j,i})$$

\nid where $a_{j,i} \in \BC$ and $h_{j,i} \in H_f$.

Let $\varphi_f$ be the lift of $\varphi$ to $\wt{\Gamma}_f$ that fixes $\wt{*}$. Since $\wt{\varphi}$ maps edges to edge-paths in $\wt{\Gamma}_f$, it induces a map $\wt{\varphi}_*: V_f \to V_f$. We call this map the \emph{equivariant Magnus representation of $f$ on $\Gamma$}.

Because $f$ acts trivially on $H_f$,  we get that $\wt{\varphi}_*$ commutes with the action of $H_f$ on $V_f$, and thus induces a $H_f$-module homomorphism $V_f \to V_f$. 

Under the identification $V_f \cong (\BC[H_f])^{E(\Gamma)}$, this homomorphism is given by multiplication by an $m \times m$ matrix $A_f \in M_m(\BC[H_f])$, where $m = \#E(\Gamma)$. We call the matrix $A_f$ the \emph{equivariant Magnus matrix of $f$ on $\Gamma$}.

\subsection{Specializations of $A_f$ and abelian covers.}
Write $H_f \cong \BZ^d$. Viewing $\BZ^d$ as a multiplicative group, we can write $\BC[\BZ^d] \cong \BC[X_1^{\pm 1}, \ldots, X_d^{\pm 1}]$.

\begin{definition} Let $\xi: \BZ^d \to \BC^\times$ be a homomorphism. Denote $\xi_i = \xi(X_i)$. Let $t \in \BC[\BZ^d]$. Using the identification $\BC[\BZ^d] \cong \BC[X_1^{\pm 1}, \ldots, X_d^{\pm 1}]$, we can view $t$ as a rational function in the variables $X_1, \ldots, X_d$. By plugging in the number $\xi_i$ for the variable $X_i$, we get a number $t(\xi) \in \BC$, which we call the \emph{specialization of $t$ at $\xi$}.
\end{definition}

\begin{definition} If $A \in M_k(\BC[\BZ^d])$ for some $k$, and $\xi: \BZ^d \to \BC$ is a homomorphism then we can define the \emph{specialization of $A$ at $\xi$} to be the $k \times k$ matrix whose $(i,j)$-coordinate is $A_{i,j}(\xi)$.
\end{definition}

The space $V_f$ is the space of formal linear combinations of edges in $\wt{\Gamma}_f$ with finite support and coefficients in $\BC$. Let $W_f$ be the space we get by removing the finite support condition. The deck group $H_f$ acts on the edges of $\wt{\Gamma}_f$ by permutations. This action gives $W_f$ an $H_f$ module structure. 
\begin{definition}
Let $\xi: H_f \to \BC^\times$ be a homomorphism. Define: $$W_{f,\xi} = \{t \in W_f | \forall h \in H_f: h \cdot t = \xi(h) t \}$$
\end{definition}

\nid Notice that for every $\xi$, the space $W_{f,\xi}$ is a $m = \#E(\Gamma)$ dimensional vector space. Indeed, let $\{\eta_1, \ldots, \eta_m\}$ be the collection of edges of $\Gamma$, and $\{\wt{\eta}, \ldots, \wt{\eta_m}\}$  a set of $m$ preferred lifts of the edges of $\Gamma$ to $\wt{\Gamma}_f$. Any edge in $\wt{\Gamma}_f$ is the image of one of the $\wt{\eta_i}$ under a deck transformation. Given any $t \in W_{f,\xi}$ and an edge $\zeta$ of $\wt{\Gamma}_f$ such that $\zeta = h \cdot \wt{\eta_i}$, then the coefficient of $\zeta$ in $t$ is $\xi(h)$ times the coefficient of $\wt{\eta_i}$. Thus, we have an obvious identification $W_{f,\xi} = \BC^{\{\wt{\eta_1}, \ldots, \wt{\eta_m} \}}$.

Since the homomorphism $\wt{\varphi}_*$ is a $H_f$ module homomorphism, it acts on every space $W_{f,\xi}$ as a $H_f$ module homomorphism which we denote $A_{f,\xi}$. 

If $\{\wt{\eta_1}, \ldots, \wt{\eta_m}\}$ is the set described above, then every element of $W_{f,\xi}$ can be written as 

$$t = \sum_{i=1}^m \sum_{h\in H_f} a_i \xi(h) (h\cdot \wt{\eta_i}) $$

We can then define $$A_{f,\xi} (t) = \sum_{i=1}^m \sum_{h\in H_f} a_i \xi(h) (h\cdot A_f \wt{\eta_i})  $$

\begin{lemma} Under the identification $W_{f,\xi} = \BC^{\{\wt{\eta_1}, \ldots, \wt{\eta_m} \}}$, the matrix corresponding to the linear transformation $A_{f,\xi}$ is the specialization $A_f(\xi)$
\end{lemma}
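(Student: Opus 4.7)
The plan is to prove this by direct computation: take the natural basis of $W_{f,\xi}$ coming from the preferred lifts $\wt{\eta_1}, \ldots, \wt{\eta_m}$, apply $A_{f,\xi}$ to each basis vector using its definition, and recognize the result as the specialization of the appropriate column of $A_f$. The lemma is a compatibility statement between two different ways of encoding the same linear map — one as multiplication by a matrix over $\BC[H_f]$, the other as a genuine linear map on a finite-dimensional $\xi$-eigenspace — so no clever idea is needed, only careful bookkeeping.

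Concretely, I would first fix the basis of $W_{f,\xi}$. For each $i$ let $t_i = \sum_{h \in H_f} \xi(h)\, h \cdot \wt{\eta_i}$; these are exactly the elements of $W_{f,\xi}$ obtained by setting $a_i = 1$ and the other $a_j = 0$ in the explicit description from the paper, so under the identification $W_{f,\xi} \cong \BC^{\{\wt{\eta_1},\ldots,\wt{\eta_m}\}}$ they correspond to the standard basis vectors. Next, I would write down $\wt{\varphi}_*(\wt{\eta_i})$ in terms of the module structure: if $(A_f)_{j,i} = \sum_{k} c_{jik}\, h_{jik}$ with $c_{jik} \in \BC$ and $h_{jik} \in H_f$, then
$$\wt{\varphi}_*(\wt{\eta_i}) \;=\; \sum_{j,k} c_{jik}\, (h_{jik} \cdot \wt{\eta_j})$$
by the very definition of the matrix $A_f$ acting on $(\BC[H_f])^{E(\Gamma)} \cong V_f$.

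Now I would apply the definition of $A_{f,\xi}$ from the paper to $t_i$ and push $h$ through the $H_f$-linearity:
$$A_{f,\xi}(t_i) \;=\; \sum_{h \in H_f} \xi(h)\, h \cdot \wt{\varphi}_*(\wt{\eta_i}) \;=\; \sum_{j,k} c_{jik} \sum_{h \in H_f} \xi(h)\, (h h_{jik}) \cdot \wt{\eta_j}.$$
The key algebraic step is the change of summation variable $g = h h_{jik}$, which, using that $\xi$ is a homomorphism, turns the inner sum into $\xi(h_{jik})^{\pm 1}\, t_j$ (with the sign dictated by whichever convention for $W_{f,\xi}$ is being used in the paper). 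Collecting, the coefficient of $t_j$ in $A_{f,\xi}(t_i)$ becomes $\sum_{k} c_{jik}\, \xi(h_{jik})^{\pm 1}$, which is precisely the specialization $(A_f)_{j,i}(\xi)$ — that is, the $(j,i)$-entry of $A_f(\xi)$. Thus the matrix of $A_{f,\xi}$ in the basis $\{t_1, \ldots, t_m\}$ is $A_f(\xi)$, as claimed.

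There is no serious obstacle: the only thing to watch is bookkeeping in the reindexing of the infinite sum over $H_f$ and consistency with whatever convention the paper uses for the $H_f$-action on $W_f$ (left versus right, $\xi$ versus $\xi^{-1}$). Once the chosen convention is fixed and the basis $\{t_i\}$ is defined compatibly with it, the computation above produces exactly the entries of $A_f(\xi)$.
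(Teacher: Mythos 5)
Your proof is correct and takes essentially the same route as the paper's: apply $A_{f,\xi}$ to the basis vectors $t_i = \sum_{h}\xi(h)\,(h\cdot\wt{\eta_i})$, push the $H_f$-action through $\wt{\varphi}_*$, and reindex the sum over $H_f$ to pull out the scalar $\xi$-factor. The paper's own proof is terser (it writes $A_f\wt{\eta_i}=\sum_j w_{i,j}\wt{\eta_j}$, switches the order of summation, and invokes the $\xi$-eigenvector property in one line), but the computation is the same; you are also right to flag the $\xi$ versus $\xi^{-1}$ bookkeeping, which the paper glosses over and which is a genuine (if harmless) convention ambiguity in the surrounding definitions.
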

\begin{proof}
Write $A_f \wt{\eta_i} = \sum_j w_{i,j} \wt{\eta_j}$, with $w_{i,j} \in \BC[H_f]$. Then $$A_f \cdot \sum_{h \in H_f} \xi(h) (h \cdot \wt{\eta_i}) = \sum_{h \in H_f} \xi(h) \sum_j (h\cdot w_{i,jj}) \wt{\eta_j}$$

\nid Switching the order of the summands and the fact that for any $t \in W_{f,\xi}$ and $h \in H_f$: $h \cdot t = \xi(h) t$ now gives the result. 
\end{proof}

Now suppose $\xi: H_f \to \BC^\times$ has finite image. Let $k =  |\xi(H_f)|$ be the size of the image group. Let $\Gamma_k \to \Gamma$ be the cover corresponding to the the kernel of the homomorphism $F_n \to H_f / k H_f$ given by reduction mod $k$. 

Let $t \in W_{f,\xi}$, and let $\eta$ be an edge in $\Gamma_k$. Given any two lifts $\wt{\eta}_1, \wt{\eta}_2$ of the edge $\eta$ to $\wt{\Gamma}_f$ the coefficients of $\wt{\eta}_1$ and $\wt{\eta}_2$ in $t$ are the same. Call this number $a_\eta$. Denote $\overline{t} = \sum_\eta a_\eta \eta \in C_1(\Gamma_k, \BC)$.

The action of $H_f / kH_f$ on $\Gamma_k$ by deck transformations induces a $H_f$ module structure on $C_1(\Gamma_k, \BC)$. The map $t \to \overline{t}$ is a $H_f$ module isomorphism. Call its image $\overline{W}_{f,\xi}$

The map $\varphi$ lifts to a map $\varphi_k$ of $\Gamma_k$. Since this map is $H_f$-equivariant, it fixes the space $\overline{W}_{f,\xi}$. Because the map $t \to \overline{t}$ is an isomorphism, the matrix giving the induced action $(\varphi_{k})_*$ on this space is $A_f(\xi)$.
\subsection{The anchoring lemma and the $L^2$-trace lemma}

\begin{definition}  Let $L$  be a lattice in $\BZ^d$. Let $t = \sum a_i h_i \in \BC[\BZ^d]$, where $a_i \in \BC$ and $h_i \in \BZ^d$. Define: $$t(L) = \sum_{h_i \in L} a_i$$
\end{definition}

\begin{definition} Led $B \in M_d(\BC[\BZ^d])$. Let $t_k = \trace[B^k]$. We say that $B$ is \emph{anchored} if there is some lattice $L$ and some integer $k$ such that $t_k(L) > d$.
\end{definition}

\begin{definition} We say that $f$ is \emph{anchored in $\Gamma$} if $A_f$ is anchored.

\end{definition}

\nid The following lemma relates specializations to lattices. 

\begin{lemma}\label{special lattice} Let $t \in \BC[\BZ^d]$, and let $L$ be a lattice. Let $N_L$ be the set of all $\xi: \BZ^d \to \BC^\times$ such that $\xi|_L = 1$. Then:

$$t(L) = \frac{1}{|N_L|} \sum_{\xi\in N_L} t(\xi) $$

\end{lemma}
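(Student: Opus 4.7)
The plan is to recognize this as a routine application of character orthogonality for the finite abelian group $\BZ^d/L$. Implicit in the statement is that $L$ is a finite-index sublattice, since otherwise $N_L$ is infinite and the right-hand side makes no sense; I would begin by noting this (or assuming it explicitly).

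The group $N_L$ is, by its very definition, the Pontryagin dual $\widehat{\BZ^d/L}$, so $|N_L| = [\BZ^d : L]$. The standard orthogonality relation for characters of a finite abelian group gives, for any $h \in \BZ^d$,
$$\frac{1}{|N_L|}\sum_{\xi \in N_L}\xi(h) \;=\; \begin{cases} 1 & \text{if } h \in L,\\ 0 & \text{otherwise.}\end{cases}$$
This is the one nontrivial input, and it is classical.

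Writing $t = \sum_i a_i h_i$ with $a_i \in \BC$ and $h_i \in \BZ^d$, the specialization $t(\xi)$ equals $\sum_i a_i \xi(h_i)$. Therefore
$$\frac{1}{|N_L|}\sum_{\xi \in N_L} t(\xi) \;=\; \sum_i a_i \cdot \frac{1}{|N_L|}\sum_{\xi \in N_L}\xi(h_i) \;=\; \sum_{h_i \in L} a_i \;=\; t(L),$$
after interchanging the two finite sums and applying the orthogonality relation term by term. There is no real obstacle here; the content of the lemma is simply to record this Fourier-theoretic identity in the form that will later be applied to the traces $t_k = \trace[A_f^k]$ appearing in the definition of anchoring.
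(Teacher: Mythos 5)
Your proof is correct and follows essentially the same route as the paper's: both reduce by linearity to monomials and apply character orthogonality for the finite abelian group $\BZ^d/L$ (the paper phrases the orthogonality via the trace of the regular representation, which is the same identity). Your explicit note that $L$ must be finite index is a reasonable clarification that the paper leaves implicit.
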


\begin{proof} Since the functions $t \to t(L)$ and $t \to t(\xi)$ are linear in $t$, it's enough to prove the lemma for the case where $t$ is a monomial. Suppose $t = a h$, with $a \in \BC$ and $h \in \BZ^d$. Let $\overline{h}$ be the image of $h$ in the finite abelian group $G = \BZ^d/L$.

By definition: $ \sum_{\xi\in N_L} t(\xi) = a \sum_{\chi \in G^\times} \chi(\overline{h})$ where $G^\times$ is the group of characters of $G^\times$. 
Denote the trace of the regular representation of $G$ by $\rho_G$. Since $G$ is abelian, $\rho_G = \sum_{\chi \in G^\times} \chi$. Thus:

$$ \sum_{\xi\in N_L} t(\xi) = a \rho_G(\overline{h}) $$

The left hand side is equal to $0$ if $\overline{h} \neq e$ and $a \cdot |G| = a\cdot |G^{\times}| = a \cdot |N_L|$ if $\overline{h} = e$. Since $\overline{h} = e$ if and only if $h \in L$, this concludes the proof. 

\end{proof}

\begin{lemma}\tup{(The anchoring lemma)} \label{anchoring} If $f$ is anchored in $\Gamma$ then there exists an abelian cover $\Gamma_k \to \Gamma$ to which $\varphi$ lifts to a map $\varphi_k$ such that $(\varphi_k)_*: H_1(\Gamma_k, \BC) \to H_1(\Gamma_k, \BC)$ has eigenvalues off the unit circle. 
\end{lemma}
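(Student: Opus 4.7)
Let $L \subseteq H_f$ be a lattice and $k$ a positive integer witnessing anchoring, so that $t_k(L) > m$, where $t_k = \trace[A_f^k]$ and $m = \#E(\Gamma)$. The plan is to pass to the regular abelian cover $\Gamma_L \to \Gamma$ with deck group $H_f/L$, compute the trace of $(\varphi_L^k)_*$ on $C_1(\Gamma_L, \BC)$ by decomposing into character isotypic components, and then transfer the resulting off-unit-circle eigenvalue from $C_1$ into $H_1$.

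First, let $K_L \lhd F_n$ be the preimage of $L$ under $F_n \twoheadrightarrow H_f$. Because $f$ acts trivially on $H_f$, it preserves $L$ and hence $K_L$, so $\varphi$ lifts to a map $\varphi_L: \Gamma_L \to \Gamma_L$. Writing $N_L$ for the group of characters of $H_f$ trivial on $L$, every such $\xi$ factors through the finite abelian deck group $H_f/L$, and the decomposition of the regular representation gives a $(\varphi_L)_*$-equivariant isotypic splitting
\[ C_1(\Gamma_L, \BC) = \bigoplus_{\xi \in N_L} \overline{W}_{f,\xi}, \]
in which each summand has dimension $m$ and, by the identification established earlier in this section, the restriction of $(\varphi_L)_*$ to $\overline{W}_{f,\xi}$ is the specialization $A_f(\xi)$. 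Summing traces and applying Lemma \ref{special lattice} yields
\[ \trace\bigl[(\varphi_L^k)_* \bigm|_{C_1(\Gamma_L, \BC)}\bigr] \;=\; \sum_{\xi \in N_L} \trace[A_f(\xi)^k] \;=\; \sum_{\xi \in N_L} t_k(\xi) \;=\; |N_L| \cdot t_k(L), \]
which by hypothesis strictly exceeds $m\cdot|N_L| = \dim C_1(\Gamma_L, \BC)$. Since the absolute value of the trace of an operator whose spectrum lies in the closed unit disc is bounded by its dimension, $(\varphi_L)_*$ must admit an eigenvalue of absolute value greater than $1$ on $C_1(\Gamma_L, \BC)$.

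It remains to argue that this eigenvalue actually lies on $H_1(\Gamma_L, \BC)$. For this I would use that $(\varphi_L)_*$ is a chain map, so the exact sequence $0 \to H_1(\Gamma_L, \BC) \to C_1 \xrightarrow{\partial} C_0 \to H_0(\Gamma_L, \BC) \to 0$ is $(\varphi_L)_*$-equivariant, and hence the multiset of eigenvalues of $(\varphi_L)_*$ on $C_1$ is contained in the multiset union of its eigenvalues on $H_1(\Gamma_L, \BC)$ and on $C_0(\Gamma_L, \BC)$. Since the train track map $\varphi_L$ sends vertices to vertices, its action on $C_0$ in the vertex basis is a 0-1 matrix with exactly one nonzero entry per column, i.e.\ the matrix of a self-map of a finite set; the eigenvalues of such a matrix are roots of unity from its cyclic part together with zeros from its transient part, and in particular all lie in the closed unit disc. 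The eigenvalue of absolute value $>1$ produced on $C_1$ therefore must live on $H_1(\Gamma_L, \BC)$, completing the proof. I expect the main subtlety to be this last bookkeeping step; the character-theoretic trace calculation itself is essentially a formal consequence of Lemma \ref{special lattice} and the setup of Section \ref{magnus}.
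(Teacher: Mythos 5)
Your proposal is correct and arrives at the same conclusion, but it takes a genuinely different route at the crucial step of transferring the large eigenvalue from $C_1$ to $H_1$. The paper's proof finds a single character $\xi \in N_L$ with $|t_i(\xi)| > m$ (by averaging, via Lemma~\ref{special lattice}) and works on the single $m$-dimensional isotypic piece $\overline{W}_{f,\xi}$, whereas you aggregate all of $N_L$ at once and compute the full trace on $C_1(\Gamma_L,\BC)$; this is a cosmetic difference. The substantive divergence is in the passage to $H_1$. The paper argues dynamically: it picks an edge $e$ whose iterates grow at rate $\lambda > 1$, shows that the normalized vectors $\varphi^j(e)/\|\varphi^j(e)\|$ converge simultaneously to the dominant generalized eigenspace $L_\lambda$ and to the cycle subspace $W$ (because any edge path can be closed up by a uniformly bounded number of edges), and then uses a compactness argument on the unit sphere to conclude that $L_\lambda \cap W \neq \{0\}$. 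You instead observe that in the exact sequence $0 \to H_1(\Gamma_L,\BC) \to C_1(\Gamma_L,\BC) \xrightarrow{\partial} C_0(\Gamma_L,\BC)$, the boundary map is $(\varphi_L)_*$-equivariant (so the eigenvalues on $C_1$ split into those on $\ker\partial = H_1$ and those on the invariant subspace $\partial(C_1) \subseteq C_0$) and that $(\varphi_L)_*$ on $C_0$ is the linearization of a self-map of the finite vertex set, whose spectrum consists solely of roots of unity and zeros. Your argument is purely algebraic and bypasses the paper's metric and limiting considerations entirely; it is, if anything, shorter and more robust, and correctly isolates the one fact that actually matters (that the $C_0$ action has unit-disc spectrum). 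Both proofs are valid.
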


\begin{proof}
Let $i$ be an integer and $L$ a lattice such that $\trace(A^i_f) (L) > m$. Denote $t_i = \trace(A^i_f)$. By Lemma \ref{special lattice}, $t_i(L) = \frac{1}{|N_L|} \sum_{\xi\in N_L} t(\xi) > m$. Since the sum $\frac{1}{|N_L|} \sum_{\xi\in N_L} t(\xi)$ is an average, there exists a $\xi \in N_L$ such that $|t_i(\xi)| > m$.

By definition of $N_L$, $\xi(H_f)$ is finite (since $L < \ker \xi$). Let $\Gamma_k \to \Gamma$ be the cover constructed above. The space $\overline{W}_{f,\xi}$ is a $m$-dimensional, $\varphi_k$-invariant subspace of $C_1(\Gamma_k, \BC)$. Furthermore, we have that $|t_i(\xi)| = |\trace(A_{f,\xi}) | > m$. Thus, the map induced by $\varphi_k$ on $C_1(\Gamma_k, \BC)$ has eigenvalues off the unit circle. To conclude the proof, we need the following claim: 

Suppose that the map induced by $\varphi_k$ on $C_1(\Gamma_k,  \BC)$ has an eigenvalue with absolute value $>1$. Then the same is true for the map induced by $\varphi_k$ on $H_1(\Gamma_k, \BC)$.

To see this, let $U = C_1(\Gamma_k, \BC)$, and let $W \subset U$ be the subspace spanned by all closed paths in $\Gamma_k$. We have a natural identification $W \cong H_1(\Gamma_k, \BC)$. The space $U$ is spanned by elements of the form $e$, where $e$ ranges over all edges of $\Gamma_k$. Pick any norm $\|\cdot \|$ on $U$, and let $\lambda > 1$ be the spectral radius of the action of $\varphi_k$ on $U$. Then there exists an edge $e$ such that $$\lim \sup_{j \to \infty} \frac{1}{j} \log \|\varphi^j (e) \| = \log \lambda  $$
Denote $L_\lambda$ to be the direct sum of all generalized eigenspaces corresponding to eigenvalues with absolute value $\lambda$.  Setting $\nu_j = \frac{\varphi^j (e)}{ \|\varphi^j (e) \|}$, we have that the distance from $\nu_j$ to $L_\lambda$ goes to $0$ as $j \to \infty$.
Note that $\varphi^j(e)$ corresponds to a path in $\Gamma$, and any such path can be closed to a loop using a bounded number of edges. Thus, the distance of $\nu_j$ from $W$ goes to $0$ as $j \to \infty$.  Therefore $L_\lambda \cap W \neq \{0 \}$. Since this space is $\varphi_k$-invariant, it contains an eigenvector with eigenvalue of absolute value $\lambda$.

\end{proof}

\nid Given $t = \sum a_i h_i \in \BC[\BZ^d]$, denote by $\| t\|_2 = \sqrt{\sum_i a_i^2}$.

\begin{lemma} \tup{(The $L^2$-trace lemma)} \label{l2trace}
If there exists an $i$ such that $\|\trace(A_f^i) \|_2 > m$ then there exists an abelian cover $\Gamma_k \to \Gamma$ to which $\varphi$ lifts to a map $\varphi_k$ such that $(\varphi_k)_*: H_1(\Gamma_k, \BC) \to H_1(\Gamma_k, \BC)$ has eigenvalues off the unit circle. 

\end{lemma}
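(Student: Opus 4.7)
The plan is to mirror the proof of the anchoring lemma, replacing the arithmetic averaging identity of Lemma \ref{special lattice} with its $L^2$ analogue, namely Plancherel's theorem on the finite abelian group $H_f/L$. Fix $i$ with $\|t\|_2 > m$, where $t = \trace(A_f^i) \in \BC[H_f]$, and let $S$ be the (finite) support of $t$.

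First, choose a lattice $L \subset H_f \cong \BZ^d$ such that the quotient map $H_f \to H_f/L$ is injective on $S$; such an $L$ exists because $S$ is finite (for instance, $L = NH_f$ for any integer $N$ larger than the diameter of $S$ in a fixed basis works). Let $\bar{t}: H_f/L \to \BC$ denote the pushforward of $t$. Injectivity on $S$ gives $\|\bar{t}\|_2 = \|t\|_2$. Let $N_L$ be the set of characters $\xi: H_f \to \BC^\times$ trivial on $L$, that is, the dual group of $G = H_f/L$, and observe that for every $\xi \in N_L$ the specialization $t(\xi)$ equals the discrete Fourier transform $\hat{\bar{t}}$ evaluated at the character of $G$ induced by $\xi$. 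Plancherel's theorem for $G$ then yields
$$\frac{1}{|N_L|}\sum_{\xi \in N_L} |t(\xi)|^2 \;=\; \|\bar{t}\|_2^2 \;=\; \|t\|_2^2 \;>\; m^2.$$
Since the left-hand side is an average, some $\xi \in N_L$ must satisfy $|t(\xi)| > m$.

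Now specialization is a ring homomorphism commuting with trace, so $t(\xi) = \trace(A_{f,\xi}^i)$ for the $m \times m$ complex matrix $A_{f,\xi} = A_f(\xi)$. If every eigenvalue of $A_{f,\xi}$ had absolute value at most $1$, then $|\trace(A_{f,\xi}^i)|$ would be at most $m$, contradicting the inequality. Hence $A_{f,\xi}$ has an eigenvalue off the unit circle. As in the anchoring lemma, $\xi$ has finite image and the associated cover $\Gamma_k \to \Gamma$ carries an $m$-dimensional $\varphi_k$-invariant subspace $\overline{W}_{f,\xi} \subset C_1(\Gamma_k, \BC)$ on which $\varphi_k$ acts by $A_{f,\xi}$; the descent argument from the end of the anchoring lemma transfers an off-unit-circle eigenvalue on $C_1(\Gamma_k, \BC)$ to one on $H_1(\Gamma_k, \BC)$, completing the proof.

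The only new ingredient compared with the anchoring lemma is the Plancherel step, and the main (minor) subtlety is the choice of $L$: it must be sparse enough that $S$ embeds in $H_f/L$, so that no cancellations occur when passing from $t$ to $\bar{t}$ and the full $L^2$ mass is preserved. I do not expect any real obstacle beyond this, since all remaining steps are verbatim repetitions of the anchoring lemma argument.
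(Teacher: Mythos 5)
Your proof is correct, and it takes a slightly different route than the paper. Both arguments hinge on Plancherel, but you apply the \emph{discrete} Plancherel identity on a finite quotient $G=H_f/L$, having first chosen $L$ sparse enough that the support of $t=\trace(A_f^i)$ injects into $G$ so that no $L^2$ mass is lost in passing to the pushforward $\bar t$. The paper instead applies Plancherel on the compact torus $\BT^d\cong H_f^*$: it views $\wh t$ as a continuous function on $\BT^d$, deduces $\|\wh t\|_{L^2(\BT^d)}=\|t\|_2>m$, concludes that some point $\xi$ of the torus satisfies $|t(\xi)|>m$, and then uses continuity of $\wh t$ to perturb $\xi$ to a nearby character whose coordinates are roots of unity (so that $\xi(H_f)$ is finite). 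Your version is a bit more elementary and self-contained (no appeal to continuity and density of roots of unity), and it makes the proof structurally uniform with the anchoring lemma, since you recover the same averaging identity over $N_L$ -- effectively an $L^2$ analogue of Lemma \ref{special lattice}. The paper's torus version is shorter to state once one is comfortable invoking Plancherel on $\BT^d$, at the cost of the extra approximation step. Both are complete proofs; the choice is stylistic.
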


\begin{proof}
Let $i$ be the number given in the statement of the theorem, and let $t = \trace(A_f^i)$. For any homomorphism $\xi: H_f \to \BC^\times$, we defined the specialization of $t$ at $\xi$, $t(\xi)$. Let $H_f^*$ be the set of all $\xi$ such that $|\xi(h)| = 1$ for all $h \in H_f$. The function $\wh{t}: H_f^* \to \BC$ given by $\xi \to t(\xi)$ is called the \emph{Fourier transform of $t$}.

Setting $H_f \cong \BZ^d$, and considering $\BZ^d$ as a multiplicative group in $X_1^{\pm 1}, \ldots, X_d^{\pm 1}$, we think of $t$ as a rational function in $X_1, \ldots, X_d$ and $\xi$ as a a $d$-tuple $(\xi_1, \ldots, \xi_d)$ all of whose coordinates have modulus $1$. The Fourier transform $\wh{t}$ is the function that plugs in the $d$-tuple $(\xi_1, \ldots, \xi_d)$ into the rational function $t$. Thus, $\wh{t}$ is a continuous function from the torus $\BT^d \cong H_f^*$ to $\BC$. 

By Plancharel's theorem, $\| t\|_2 = \|\wh{t} \|_2$ where the right hand norm is the norm in $L^{2}(\BT^d)$ measured using the Haar measure of the torus. By our assumption, we have that $\| \wh{t}\|_2 > m$. Hence, there is a point $\xi \in H_f^*$ such that $|t(\xi)| > m$. Since $\wh{t}$ is a continuous function, this point can taken to coordinates that are all roots of unity. This means that $\xi(H_f)$ is finite. We now proceed exactly as in the anchoring lemma (\ref{anchoring}) to complete the proof. 

\end{proof}

\section{The transition graph of $\varphi$} \label{transition}

\subsection{The transition graph and associated objects}

The transition graph is a technical gadget that we use to encode information about the map $\varphi$. 
\begin{definition}
Let $E(\Gamma) = \{e_1, \ldots, e_m\}$ be the edge set of $\Gamma$. Pick, once and for all, an orientation on each edge of $\Gamma$. Construct a directed graph $\CT = \CT[\Gamma, \varphi]$ called the \emph{transition graph of $\varphi$} in the following way. The vertex set of $\CT$ is $\{e_1, \ldots e_m\}$. Connect the vertex $e_i$ to the vertex $e_j$ with $e(i,j)$ directed edges, where $e(i,j)$ is the number of times $\varphi(e_i)$ traverses the edge $e_j$ (in either direction). 

\end{definition}

\nid We will associate several useful objects to the graph $\CT$ which we discuss in this section.
\begin{definition} Pick a \emph{decorating function} $\textbf{d} : E(\CT) \to \BN$ such that whenever $E_{i,j}$ is the set of edges in $\CT$ from $e_i$ to $e_j$, then $\textbf{d}|_{E_{i,j}}$ is a bijection onto the set $\{1, \ldots, e(i,j) \}$. We think of each of the edges $\eta$ connecting $e_i$ to $e_j$ as corresponding to the $\textbf{d}(\eta)$-th time that $\varphi(e_i)$ traverses $e_j$.  
\end{definition} 

\begin{definition}
Extend the decorating function to a function $\textbf{d}: \CP(\CT) \to \BN$, where $\CP(\CT)$ is the set of edge paths in $\CT$, by  requiring that $\textbf{d}$ restricted to the set of all paths connecting $e_i$ to $e_j$ of length $k$ is a bijection onto $\{1, \ldots, e(i,j;k)\}$ where $e(i,j;k)$ is the number of times $\varphi^k(e_i)$ traverses $e_j$, in either direction. We think of each of a path $\eta_1 \ldots \eta_k$ connecting $e_i$ to $e_j$ as corresponding to the $\textbf{d}(\eta_1 \ldots \eta_k)$-th time that $\varphi^k(e_i)$ traverses $e_j$.
\end{definition}

\begin{definition} Let $\CP(\Gamma)$ be the set of paths in $\Gamma$. Define a \emph{path function} $\textbf{p}: \CP(\CT) \to \CP(\Gamma)$ in the following way. Let $\eta_1 \ldots \eta_k$ be a path in $\CT$ connecting $e_1$ to $e_j$. We denote $\varphi^k(e_i) = abc$ where $a,b,c \in \CP(\Gamma)$, and $b$ the path of length $1$ traversing $e_j$ that corresponds to $\textbf{d}(\eta_1 \ldots \eta_k)$. We define $\textbf{p}(\eta_1 \ldots \eta_k) = a$ if $b$ traverses $e_j$ in the positive direction and $\textbf{p}(\eta_1 \ldots \eta_k) = ab$ if $b$ traverses $e_j$ in the negative direction. Note that this convention assures that the endpoint of $\textbf{p}(\eta_1 \ldots \eta_k)$ is the initial point of $e_j$.
\end{definition}

\begin{definition}
Let $\pi: \Gamma_0 \to \Gamma$ be a regular cover to which we can lift $\varphi$ to a map $\varphi_0$. Denote the deck group of this cover by $\calD$. Let $V = V(\Gamma)$. Choose a lift $V_0$ of the set $V$ to $\Gamma_0$. Every vertex $w \in V(\Gamma_0)$ satisfies $w = \sigma(v)$ for some $v \in V_0, \sigma \in \calD$. We say that $\sigma$ is the \emph{address} of $w$, and write $\sigma = \textbf{a}(w)$.
\end{definition}

\begin{definition} Given a regular cover $\pi: \Gamma_0 \to \Gamma$ as above and lift $V_0$ of $V$, define a function $\textbf{t}_\pi: \CP(\CT) \to \calD$ called a \emph{translation function} by setting: $\textbf{t}_\pi (\eta_1 \ldots \eta_k) = \textbf{a}(w_2) \textbf{a}(w_1)^{-1}$ where $w_1, w_2$ are respectively the initial and terminal points of $\textbf{p}(\eta_1 \ldots \eta_k)$. We will most often be concerned with the translation function for the cover $\pi: \wt{\Gamma}_f \to \Gamma$. In this case, we will omit the subscript $\pi$. 
\end{definition}

\begin{definition} Define a \emph{sign function} $\textbf{s}: E(\Gamma) \to \{\pm1 \}$ in the following way. Let $\eta$ be an edge connecting $e_i$ to $e_j$. Set $\textbf{s}(\eta) = 1$ if and only if the $\textbf{d}(\eta)$-th time $\varphi(e_i)$ traverses $e_j$ is in the positive direction. We extend the definition of $\textbf{s}$ to edge paths in $\CT$ by setting: $\textbf{s}(\eta_1 \ldots \eta_k) = \textbf{s}(\eta_1) \ldots \textbf{s}(\eta_k)$.
\end{definition}

\begin{example}
Suppose $\Gamma = S^1 \vee S^1$ is the rose on two petals. The fundamental group of this graph is $F_2 = \langle a,b\rangle$ where $a$ and $b$ are loops about the two petals based at their intersection. Consider the inner autmorphism $a \to bab^{-1}$, $b \to b$. This automorphism is induced by a function $\varphi: \Gamma \to \Gamma$ which is the identity on the loop $b$ and sends $a$ to $bab^{-1}$. The transition graph $\CT$ has two vertices: one labeled $a$ and one labeled $b$. Identify $H_1(\Gamma; \BZ) \cong \BZ^2$. The vertex $b$ has one outgoing edge $\eta_1$ which connects it to itself. We have that $\textbf{s}(\eta_b) = 1$, $\textbf{p}(\eta_1)$ is the trivial path, and $\textbf{t}(\eta_1) = 0$. The vertex $a$ has three outgoing edges. One, $\eta_2$ connects it to itself and two $\eta_3, \eta_4$ connect it to $b$. We have that $\textbf{s}(\eta_2) = 1$, $\textbf{p}(\eta_2)$ is the path $b$ and $\textbf{t}(\eta_2) = (0,1)$. Of the edges connecting $a$ to $b$ we have that $\textbf{s}(\eta_3) = 1$, $\textbf{s}(\eta_4) = -1$, $\textbf{p}(\eta_1)$ is the trivial path, $\textbf{p}(\eta_4) = bab^{-1}$, $\textbf{t}(\eta_3) = 0$ and $\textbf{t}(\eta_4) = (1,0)$.
\end{example}

\nid The above definitions allow us to give a different description of the matrix $A_f$ defined in section \ref{magnusmatrix}.

\begin{observation} Let$ 1 \leq i,j, \leq m$. Let $E_{i,j}$ be the set of edges in $\CT$ connecting $e_i$ to $e_j$. Let $\textbf{t}$ be the translation corresponding to the cover $\wt{\Gamma}_f \to \Gamma$. Then:

$$(A_f)_{i,j} = \sum_{\eta \in E_{i,j}} \textbf{s}(\eta) \textbf{t}(\eta) $$ 

\nid where $\textbf{t}(\eta)$ is understood as an element of the group ring of $H_f$ supported at one point. 

\end{observation}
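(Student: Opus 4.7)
The plan is a direct unwinding of definitions. By construction of $A_f$ in Section \ref{magnusmatrix}, the coefficient $(A_f)_{i,j} \in \BC[H_f]$ is characterized by
$$\wt{\varphi}_*(\wt{e_i}) \;=\; \sum_{j=1}^m (A_f)_{i,j} \cdot \wt{e_j}$$
under the identification $V_f \cong (\BC[H_f])^{E(\Gamma)}$, where the preferred lifts $\wt{e_j}$ correspond to $0 \in H_f$ and $H_f$ acts as deck transformations. The task is therefore to read off $(A_f)_{i,j}$ from the concrete chain $\wt{\varphi}_*(\wt{e_i})$.

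First I would note that the edge path $\varphi(e_i)$ traverses $e_j$ exactly $e(i,j)$ times, and that by the very definition of the decorating function $\mathbf{d}$ these traversals are in natural bijection with $E_{i,j}$. Lifting $\varphi(e_i)$ to $\wt{\Gamma}_f$ starting at $\wt{\varphi}(\wt{v_i})$, each such traversal becomes a traversal of some specific translate $g_\eta \cdot \wt{e_j}$ and contributes the chain $\mathbf{s}(\eta)\cdot g_\eta \cdot \wt{e_j}$ to $\wt{\varphi}_*(\wt{e_i})$; the sign captures the orientation of the traversal and by construction agrees with $\mathbf{s}(\eta)$. Consequently
$$(A_f)_{i,j} \;=\; \sum_{\eta \in E_{i,j}} \mathbf{s}(\eta) \, g_\eta,$$
and everything reduces to showing $g_\eta = \mathbf{t}(\eta)$.

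To identify $g_\eta$ with $\mathbf{t}(\eta)$, write $\varphi(e_i) = abc$ with $b$ the traversal corresponding to $\eta$. By the convention defining $\mathbf{p}(\eta)$ from Section \ref{transition}, the terminal point of $\mathbf{p}(\eta)$ is the initial vertex of $e_j$ in $\Gamma$, and when $\mathbf{p}(\eta)$ is lifted to $\wt{\Gamma}_f$ starting at $\wt{\varphi}(\wt{v_i})$, its terminal point is the initial vertex of the specific lift of $e_j$ being traversed. Since $\wt{\varphi}$ fixes $\wt{*}$ and $f$ acts trivially on $H_f$, the deck transformation translating the preferred lift $\wt{e_j}$ to the traversed lift can be read off as the difference between the addresses of the terminal and initial endpoints of the lift of $\mathbf{p}(\eta)$; this difference is exactly $\mathbf{a}(w_2)\mathbf{a}(w_1)^{-1} = \mathbf{t}(\eta)$. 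Summing over $\eta \in E_{i,j}$ then gives the claimed formula.

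I expect the main obstacle to be purely bookkeeping: verifying that the case split $\mathbf{p}(\eta) = a$ versus $\mathbf{p}(\eta) = ab$ is precisely what is needed so that the terminal vertex of the lifted $\mathbf{p}(\eta)$ lands at the \emph{initial} vertex of the lift of $e_j$ appearing in $\wt{\varphi}_*(\wt{e_i})$, so that the address-difference formula for $\mathbf{t}(\eta)$ genuinely extracts the deck translation $g_\eta$ sending $\wt{e_j}$ to this lift. Once this orientation check is in place, the observation follows immediately from linearity of the sum over $E(\Gamma)$.
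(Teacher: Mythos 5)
Your proposal is a direct unwinding of the definitions, which is exactly what the paper's one-line proof asks for: compute $\wt{\varphi}_*(\wt{e_i})$ as the $1$-chain of the lifted edge path and read off the coefficient of each translate of $\wt{e_j}$. The correspondence between traversals of $e_j$ in $\varphi(e_i)$ and edges $\eta \in E_{i,j}$ via the decorating function, and the sign bookkeeping via $\mathbf{s}(\eta)$, are handled correctly.

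One step merits a little more care than you give it. You claim the deck translation $g_\eta$ (sending the preferred lift $\wt{e_j}$ to the traversed lift) ``can be read off as the difference between the addresses of the terminal and initial endpoints of the lift of $\mathbf{p}(\eta)$.'' In fact $g_\eta = \mathbf{a}(w_2)$ on the nose, while $\mathbf{t}(\eta) = \mathbf{a}(w_2)\mathbf{a}(w_1)^{-1}$; these coincide only when $\mathbf{a}(w_1) = \mathbf{a}(\wt{\varphi}(\wt{v_i})) = 0$, i.e.\ when $\wt{\varphi}$ sends the preferred vertex lift $V_0$ into itself. Your stated justification (``$\wt{\varphi}$ fixes $\wt{*}$ and $f$ acts trivially on $H_f$'') only gives this for $v_i = *$. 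When $\Gamma$ is a rose this is automatic (one vertex), and this is the implicit convention the paper operates under throughout Section~\ref{transition} (e.g.\ the groupoid homomorphism in Observation~\ref{pathhomomorphism} needs the same normalization); but for a train-track graph with several vertices you should either observe that the lift $\wt{T}$ can be chosen so that $\wt{\varphi}(V_0) \subset V_0$, or note that the discrepancy is a diagonal twist that is irrelevant to the trace-based arguments that follow. With that normalization made explicit, your argument is complete and matches the paper's intended computation.
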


\nid This observation can be seen by using the definition of $A_f$ to calculate $A_f \wt{e}$, where $\wt{e}$ is a lift of the edge $e$ in $\Gamma$.

\subsection{Vertex subgraphs and extremal subgraphs of $\CT$}

The graph $\CT$ has important subgraphs, which we call \emph{extremal subgraphs} and \emph{vertex subgraphs} that play a major role in our proof. Before we define them, we require an observation, which follows from the fact that $f$ acts trivially on $H_f$. 

\begin{observation} \label{pathhomomorphism}The map $\textbf{t}: \CP(\CT) \to H_f$, where $\CP(\CT)$ is viewed as a groupoid under concatenation, is a homomorphism of groupoids. 
\end{observation}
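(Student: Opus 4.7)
The argument I would give is a direct unpacking of the observation: the hypothesis that $f$ acts trivially on $H_f$ is equivalent to the lift $\varphi_f:\wt{\Gamma}_f\to\wt{\Gamma}_f$ being $H_f$-equivariant, and this single property is exactly what makes $\textbf{t}$ respect concatenation.

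First I would reinterpret $\textbf{t}(p)$ intrinsically inside the cover. For a path $p=\eta_1\ldots\eta_k$ in $\CT$ from $e_i$ to $e_j$, the path $\textbf{p}(p)$ is an initial segment of $\varphi^k(e_i)$ terminating at the initial vertex of $e_j$, so it has a canonical lift to $\wt{\Gamma}_f$, namely the corresponding initial segment of $\varphi_f^k(\wt{e_i})$. Since $H_f$ is abelian, the defining formula $\textbf{t}(p)=\textbf{a}(w_2)\textbf{a}(w_1)^{-1}$ is invariant under post-composing that lift with any deck translation, so the answer is well-defined. Concretely it reads off the unique element $\textbf{t}(p)\in H_f$ for which the traversal of $\eta_j$ singled out by $\textbf{d}(p)$ inside $\varphi_f^k(\wt{e_i})$ occurs on the edge $\textbf{t}(p)\cdot\wt{\eta_j}$ of $\wt{\Gamma}_f$ (measured against the starting edge of $\varphi_f^k(\wt{e_i})$).

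Now let $p_1:e_i\to e_j$ of length $k$ and $p_2:e_j\to e_r$ of length $l$ be concatenable. The labeling $\textbf{d}$ on length-$(k+l)$ paths in $\CT$ is constructed so that the $\textbf{d}(p_1p_2)$-th traversal of $\eta_r$ inside $\varphi^{k+l}(e_i)=\varphi^l(\varphi^k(e_i))$ is located in two stages: first find the $\textbf{d}(p_1)$-th traversal of $\eta_j$ in $\varphi^k(e_i)$, then apply $\varphi^l$ to that specific copy of $\eta_j$ and take the $\textbf{d}(p_2)$-th traversal of $\eta_r$ inside the result. Lifted to $\wt{\Gamma}_f$, the first stage produces the edge $\textbf{t}(p_1)\cdot\wt{\eta_j}$ inside $\varphi_f^k(\wt{e_i})$. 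Applying $\varphi_f^l$ and using the equivariance noted in Section~\ref{magnusmatrix},
\[
\varphi_f^l\bigl(\textbf{t}(p_1)\cdot\wt{\eta_j}\bigr)=\textbf{t}(p_1)\cdot\varphi_f^l(\wt{\eta_j}),
\]
the second stage then picks out the edge $\textbf{t}(p_2)\cdot\wt{\eta_r}$ inside $\varphi_f^l(\wt{\eta_j})$. Combining, the $p_1p_2$-traversal of $\eta_r$ inside $\varphi_f^{k+l}(\wt{e_i})$ occurs on $\textbf{t}(p_1)\textbf{t}(p_2)\cdot\wt{\eta_r}$. By the reinterpretation of $\textbf{t}$ this is exactly $\textbf{t}(p_1p_2)=\textbf{t}(p_1)+\textbf{t}(p_2)$, which is the groupoid homomorphism property (inverses are handled by the same bookkeeping applied to reversed traversals).

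The only step that is not formal bookkeeping is the first one: matching the address-based definition of $\textbf{t}$ with the edge-in-cover interpretation, where one has to verify that the choice of starting lift of $\textbf{p}(p)$ washes out because $H_f$ is abelian. Once that dictionary is in place, the equivariance of $\varphi_f$ supplied by the triviality of $f$ on $H_f$ does all the work, exactly as the observation advertises.
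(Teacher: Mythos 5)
Your argument captures exactly the mechanism the paper is invoking when it states (without proof) that the observation ``follows from the fact that $f$ acts trivially on $H_f$'': triviality of the $f$-action makes the chosen lift $\varphi_f$ commute with the $H_f$-deck action, and the recursion $\textbf{p}(p_1p_2)=\varphi^{l}(\textbf{p}(p_1))\,\textbf{p}(p_2)$ (which the paper spells out in the proof of Lemma~\ref{consistentsubgraph}) then makes $\textbf{t}$ additive under concatenation. So your proof is on the right track and is, in spirit, the same argument.

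One bookkeeping point deserves more care than you give it. When you assert that ``the $\textbf{d}(p)$-th traversal of $\eta_j$ inside $\varphi_f^k(\wt{e_i})$ occurs on the edge $\textbf{t}(p)\cdot\wt{\eta_j}$,'' this is only literally true if the initial vertex of $\varphi_f^k(\wt{e_i})$ has address $0$, i.e.\ if $\varphi_f^k$ maps $V_0$ back into $V_0$. In general there is an offset $a_k=\textbf{a}\bigl(\varphi_f^k(\wt{\iota(e_i)})\bigr)$, and the traversed edge sits at address $\textbf{t}(p)+a_k$. You gesture at this with ``measured against the starting edge,'' but you do not track the offset through the composition; when you do, the identity $\textbf{t}(p_1p_2)=\textbf{t}(p_1)+\textbf{t}(p_2)$ holds only after the various offsets telescope, and the telescoping uses once more that $f_*$ acts trivially on $H_f$ together with a compatibility of the offsets at the two base-change vertices $\iota(e_j)$ and $\varphi^k(\iota(e_i))$. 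The cleanest way to organize this is to introduce the auxiliary ``address-difference'' map $d:\CP(\Gamma)\to H_f$ (well-defined by abelianness, as you note), write $\textbf{t}=d\circ\textbf{p}$, apply $d$ to the recursion for $\textbf{p}$, and reduce to showing $d(\varphi^{l}\alpha)=d(\alpha)$; this isolates exactly where the triviality of the $f$-action is needed and where the choice of preferred lifts $V_0$ must be compatible with $\varphi_f$. Your proof has the right idea; it just needs that last normalization carried through explicitly rather than absorbed into a parenthetical.
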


\nid Let $\textbf{t}$ be the translation function corresponding to the cover $\wt{\Gamma}_f \to \Gamma$.
\begin{definition}

For any path $\overline{\eta} = \eta_1 \ldots \eta_k$, define $\textbf{t}_n (\overline{\eta})$, the \emph{normalized translation of $\overline{\eta}$} to be $\textbf{t}_n(\overline{\eta}) = \frac{1}{k}\textbf{t}(\eta) \in H_f \otimes \BR$.
\end{definition}

\begin{definition}

A \emph{based cycle} in $\CT$ is closed path. A \emph{cycle} is the equivalence class of a based cycle, under the relation identifying two based cycles that differ by a cyclic permutation of their edges. One corollary of Observation \ref{pathhomomorphism} is that the function $\textbf{t}$ is well defined on cycles.   

Let $\CC$ be the set of cycles in $\CT$ and let $\CC_s$ be the set of simple cycles in $\CT$ (a cycle is simple if it gives an embedding on $S^1$ into $\CT$). As a corollary of observation \ref{pathhomomorphism}, we get that $\textbf{t}_n(\CC)$ is contained in the convex hull of $\textbf{t}_n(\CC_s)$. Since $\CC_s$ is a finite set, this convex hull is a polytope. We call this polytope the \emph{equivariant shadow of $\varphi$} and denote it $\CS^e \varphi$.

Every vertex $u$ is in $H_f \otimes \BQ$. Since $\CS^e \varphi^k = k \CS^e \varphi$,  by replacing $f$ with some power of itself we can assume that every vertex of $\CS^e \varphi$ has integer vertices. In our proof of Theorem \ref{theorem3} we will show that it suffices to prove the theorem for $f^k$ for some integer $k$. Therefore, we can and will assume in the sequel that every vertex of $\CS^e \varphi$ is integral. 

\end{definition}

\begin{definition} Let $\omega: H_f \otimes \BR \to \BR$ be a linear transformation. Let $M_\omega$ be the maximal value $\omega$ takes on $\CS^e \varphi$. Let $\CT_\omega$ be the union of all $\gamma \in \CC$ such that $\omega(\textbf{t}_n(\gamma)) = M_\omega$. We call the graph $\CT_\omega$ \emph{the extremal subgraph of $\CT$ corresponding to $\omega$}. 
\end{definition}

Since every vertex of a convex polytope is the maximal set of some linear function, we have a special kind of extremal subgraph called a \emph{vertex subgraph}. 

\begin{definition}
 Let $u \in H_f \otimes \BR$ be a vertex $\CS^e \varphi$. Let $\CT_{u}$ be the union of all $\gamma \in \CC$ such that $\textbf{t}_n(\gamma) = u$. We call $\CT_{u}$ the \emph{vertex subgraph corresponding to $u$}.

\end{definition}

\begin{lemma} \label{extremegraph} Let $\omega: H_f \otimes \BR \to \BR$ be as above. Let $\gamma \in \CC$. Then $\omega(\textbf{t}_n(\gamma)) = M_\omega$ if and only if $\gamma$ is a cycle in $\CT_\omega$.
\end{lemma}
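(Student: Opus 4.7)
The plan is to handle the forward direction by unwinding the definition, and to prove the converse using a Bellman--Ford / max-plus potential function on the vertex set of $\CT$.

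For the forward direction, I note that if $\omega(\textbf{t}_n(\gamma)) = M_\omega$ then $\gamma$ is itself one of the cycles whose union constitutes $\CT_\omega$, so every edge of $\gamma$ lies in $\CT_\omega$ and $\gamma$ is a cycle in $\CT_\omega$.

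The substance is the converse. My first move is to introduce the normalized edge weight $w'(e) := \omega(\textbf{t}(e)) - M_\omega$. By Observation \ref{pathhomomorphism} this extends additively along paths, so $\sum_{e \in \delta} w'(e) = |\delta|\bigl(\omega(\textbf{t}_n(\delta)) - M_\omega\bigr)$ for any cycle $\delta$. Decomposing an arbitrary cycle as an edge-disjoint union of simple cycles shows that $M_\omega$ is the maximum of $\omega \circ \textbf{t}_n$ over \emph{all} cycles in $\CC$, so every cycle has nonpositive $w'$-sum, with equality exactly on the maximum cycles. In particular, inside the strongly connected component $S$ of $\CT$ containing $\gamma$ there are no positive $w'$-cycles, and fixing a basepoint $v_0 \in S$ lets me define the potential $h(v) := \sup_\pi w'(\pi)$, the supremum over $v_0$-to-$v$ paths in $S$. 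This is finite, and concatenation gives $h(v) - h(u) - w'(e) \geq 0$ for every directed edge $e = (u,v)$ of $S$.

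The pivotal step is to upgrade this inequality to equality on every edge of $\CT_\omega$ within $S$: for any maximum cycle $\delta \subseteq S$, the sum $\sum_{e \in \delta}\bigl(h(v(e)) - h(u(e)) - w'(e)\bigr)$ telescopes the $h$-terms to $0$ and leaves $-\sum w'(e) = 0$, forcing each of the nonnegative summands to vanish. So $h(v) - h(u) = w'(e)$ holds on every edge of every maximum cycle, hence on every edge of $\CT_\omega \cap S$. The backward direction then drops out: for any cycle $\gamma$ whose edges all lie in $\CT_\omega$, summing the edgewise equality around $\gamma$ telescopes again to yield $\sum_{e \in \gamma} w'(e) = 0$, i.e.\ $\omega(\textbf{t}_n(\gamma)) = M_\omega$. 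I expect the main difficulty to be the conceptual step of converting the cycle-level definition of $\CT_\omega$ into an edge-level certificate---a priori it is not obvious that the union of edges of maximum cycles should have the closure property that every cycle it supports is again maximum. The potential function is precisely the device that produces this certificate, after which both directions follow from routine telescoping.
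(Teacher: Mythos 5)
Your proof is correct, but the mechanism is genuinely different from the paper's. The paper argues by direct exchange: assuming $g(\gamma) = \omega(\textbf{t}_n(\gamma)) < M_\omega$, it completes each edge $\eta_i$ of $\gamma$ to a maximum cycle $\eta_i\zeta_i$, concatenates $\zeta_k\cdots\zeta_1$ into a single cycle $\zeta$, and computes $g(\zeta) = \bigl(1+\tfrac{k}{l}\bigr)M_\omega - \tfrac{k}{l}g(\gamma) > M_\omega$, contradicting maximality of $M_\omega$. Your route is the dual one: you build a global certificate, a Bellman--Ford potential $h$ on the relevant strongly connected component, prove $h(v)-h(u) \geq w'(e)$ on every edge, and then use the telescoping-plus-nonnegativity argument (complementary slackness) to upgrade to equality on every edge of every maximum cycle, after which the converse is a one-line telescope around $\gamma$. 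Both arguments are sound. The paper's exchange argument is more self-contained and shorter --- it avoids the overhead of defining $h$, checking finiteness (no positive $w'$-cycles), and restricting to a strongly connected component --- while your potential-function argument gives a stronger structural statement (an explicit reweighting under which $\CT_\omega$ is precisely the zero-weight subgraph), which is the standard device from max-plus/tropical spectral theory and parametric shortest paths; that certificate can be reused if one later needs to reason about $\CT_\omega$ edge-by-edge. One small point worth making explicit in your write-up: any cycle lies in a single strongly connected component, and any maximum cycle through an edge $e\in\gamma$ necessarily lies in the same component as $\gamma$, so the edge-level equalities you derive really do cover all of $\gamma$.
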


\begin{proof} The only if direction is just the definition of the subgraph $\CT_\omega$. We will prove the if direction. For any path $\delta$ in $\CT_\omega$, define $g(\delta) = \omega \textbf{t}_n(\delta)$. Suppose that $\gamma$ is a cycle in $\CT_\omega$ with $g(\gamma) < M_\omega$. 

Choose a based cycle $\eta_1 \ldots \eta_k$ in the equivalence class of $\gamma$. For every $1 \leq i \leq k$, the edge $\eta_i$ is part of the graph $\CT_\omega$, and hence is contained in some cycle whose normalized translation is $M_\omega$. If $\eta_i$ connects vertices $e$ to $e'$, we can thus find a path $\zeta_i$ connecting $e'$ to $e$ such that $g(\eta_i \zeta_i) = M_\omega$. Let $l_i$ be the length of $\zeta_i$. 

Let $\zeta$ be the path $\zeta_k \ldots \zeta_1$. Since $g(\eta_i \zeta_i) = M_\omega$, we have that: $$\omega(\textbf{t}(\zeta_i)) = (l_k+1)M_\omega - \omega(\textbf{t}(\eta_i)).$$

Let $l =l_1 + \ldots + l_k$. Our assumption that $g(\gamma) < M_\omega$ gives that:

$$g(\zeta) = \frac{1}{l}\sum_{i=1}^k (l_k+1)M_\omega - \omega(\textbf{t}(\eta_i)) = (1 + \frac{k}{l})M_\omega - \frac{k}{l}g(\gamma) > M_\omega$$

This is a contradiction to our definition of $M_\omega$.

\end{proof}

\begin{observation} \label{extremegraphgeneral} 
The notion of vertex subgraphs is central to our proof, and we will need to use it in a more general context than the one outlined above. Note that the proof of Lemma \ref{extremegraph} did not use any properties of $\textbf{t}$, aside from the fact that it is homomorphism from the groupoid of paths to an abelian group. Thus we can define extremal and vertex graphs with respect to any such homomorphism. We can take this a step further. Any function from cycles to an abelian group that is additive on based cycles which are based at the same point can be extended to a homomorphism from the groupoid of paths. Thus, even in this more general situation we can still define extremal and vertex subgraphs. 
\end{observation}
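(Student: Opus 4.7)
The plan is to verify the two assertions made in this observation. First, I would re-examine the proof of Lemma~\ref{extremegraph} and confirm that the only property of $\textbf{t}$ used is additivity under concatenation, i.e., that $\textbf{t}$ is a groupoid homomorphism from the path groupoid $\CP(\CT)$ to an abelian group. The construction of the auxiliary paths $\zeta_i$ and the averaging identity producing $g(\zeta) > M_\omega$ depend only on this additivity, so once we tensor the codomain with $\BR$ the polytope, the extremal subgraph $\CT_\omega$, and the contradiction all go through verbatim. This directly justifies defining extremal and vertex subgraphs with respect to any groupoid homomorphism to an abelian group.

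For the second, deeper claim, let $g$ be a function on cycles of $\CT$ that is additive on based cycles sharing a basepoint. My plan to build a groupoid homomorphism $\tilde g : \CP(\CT) \to A$ is to fix a spanning tree $T$ of $\CT$ rooted at a vertex $*$, and for each vertex $v$ let $\tau_v$ denote the unique reduced path in $T$ from $*$ to $v$. For a path $\alpha$ from $u$ to $v$, I would define
\[
  \tilde g(\alpha) \;:=\; g\bigl([\tau_u \cdot \alpha \cdot \tau_v^{-1}]\bigr),
\]
interpreting the argument as the cycle class of a based cycle at $*$. For the homomorphism property, given composable $\alpha : u \to v$ and $\beta : v \to w$, insert $\tau_v^{-1}\tau_v$ at the midpoint to write
\[
  \tau_u \alpha \beta \tau_w^{-1} \;=\; (\tau_u \alpha \tau_v^{-1}) \cdot (\tau_v \beta \tau_w^{-1});
\]
both factors are based cycles at $*$, so additivity of $g$ at this common basepoint yields $\tilde g(\alpha\beta) = \tilde g(\alpha) + \tilde g(\beta)$. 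Identity paths map to $0$ and inverses to negatives by the same mechanism applied to the constant cycle.

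The one technical subtlety I expect to be the main obstacle is ensuring that the insertion of $\tau_v^{-1}\tau_v$ produces the same cycle class as the concatenated path. This requires $g$ to be insensitive to backtracking, which is automatic in the path groupoid $\CP(\CT)$ where paths are identified modulo such cancellation (equivalently, cycles in $\CT$ are freely reduced by convention). A secondary check is that the construction does not depend on the choice of spanning tree $T$ in any essential way: changing $T$ alters $\tilde g$ by a coboundary, but every choice yields a valid groupoid homomorphism, and that is all we need. Once these points are acknowledged the extension is manifestly well-defined, and combining with the first observation allows us to invoke (the proof of) Lemma~\ref{extremegraph} in the full generality asserted.
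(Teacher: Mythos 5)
Since the paper states this as a bare \emph{Observation} with no accompanying proof, there is no argument of the author's to compare yours against; the question is whether your proposed proof actually carries the assertion. Your first point — that the argument of Lemma \ref{extremegraph} uses only the groupoid-homomorphism property of $\textbf{t}$, so that extremal and vertex subgraphs can be defined with respect to any such homomorphism — is an accurate reading of that proof and requires no further comment.

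For the second point your spanning-tree construction $\tilde g(\alpha) = g(\tau_u \alpha \tau_v^{-1})$ is the natural thing to try, and the homomorphism verification (insert $\tau_v^{-1}\tau_v$, use additivity at the common base point $*$) is correct \emph{provided} $g$ is defined on the cycles to which it is being applied. That is where there is a real gap that you flag only obliquely. In this paper $\CT$ is a \emph{directed} graph, ``cycles'' are directed cycles, and the function one wants to apply the observation to — the base-point-free deviation $\overline{\Delta}$ of Lemma \ref{consistentsubgraph} — is defined only on directed cycles. Your candidate cycle $\tau_u \alpha \tau_v^{-1}$ traverses the tree edges of $\tau_v$ backwards, so it is not a directed cycle, and $g$ of it is not yet defined. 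Saying ``$g$ is insensitive to backtracking'' addresses whether inserting $\tau_v^{-1}\tau_v$ changes the value, which is the lesser problem; the prior problem is whether $g$ is defined on the target at all. To close this one must either (i) first extend $g$ from the monoid of directed based cycles at $*$ to the whole free group $\pi_1(\CT,*)$, using strong connectivity of $\CT$ to guarantee that directed cycles generate, and then check compatibility; or (ii) avoid the extension entirely and rerun the proof of Lemma \ref{extremegraph} directly on cycles, observing that the needed identity $\textbf{t}(\gamma) + \textbf{t}(\zeta) = \sum_i \textbf{t}(\eta_i\zeta_i)$ involves only directed cycles and follows from additivity at a common base point together with cyclic invariance. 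Route (ii) is closer in spirit to what the Observation is actually used for. As written, your proof assumes the harder half of what it sets out to prove.
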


\begin{definition} Let $u$ be a vertex of $\CS^e \varphi$. Let $e_i, e_j$ be vertices of $\CT$. Let $E_{i,k}(u)$ be the set of edges in $\CT_u$ connecting $e_i$ to $e_j$. Define the \emph{vertex matrix of $u$} or $A_{f,u}$ by setting: 
$$(A_{f,u})_{i,j} = \sum_{\eta \in E_{i,j}(u)} \textbf{s}(\eta) \textbf{t}(\eta) $$

\nid Similarly, we can define a matrix for any subgraph of $\CT$. 
\end{definition}

\begin{definition} The vertex $u$ is said to be \emph{stable} if $A_{f,u}$ is not nilpotent. 
\end{definition}

\subsection{Subgraphs and covers} Let $\pi: \Gamma' \to \Gamma$ be a cover to which $\varphi$ can be lifted to a map $\varphi'$. The transition graph of $\varphi'$ is a cover of the graph $\CT$. We will denote it by $\CT[\pi]$. If $\CT_0$ is a subgraph of $\CT$, we will denote it by $\CT_0[\pi]$. Given an vertex subgraph $\CT_u \subset \CT$, we denote its lift to $\CT[\pi]$ by $\CT_u[\pi]$, and let $A_{f,u}[\pi]$ be the associated matrix. We say that $u$ is \emph{stable in the cover $\pi$} if $A_{f,u}[\pi]$ is not nilpotent.

\subsection{The dimension of $\CS^e \varphi$}  
The group $G = F_n \rtimes_f \BZ$ is the fundamental of a mapping torus $M_f$. If $f: \Sigma \to \Sigma$ is a surface diffeomorphism, then this mapping torus is a $3$-manifold. If $f$ is a free group automorphism then we form the mapping torus $M_f = \Gamma \times I / \sim$ where $(x,0)\sim (\varphi(x), 1)$. 

We can write $H_1(M_f; \BZ) \cong H^e_1(\Gamma; \BZ) \oplus \BZ$ where  $H^e_1(\Gamma; \BZ)$ is the image of $H_1(\Gamma; \BZ)$ in $H_1(M_f; \BZ)$.  Let $\gamma$ be a cycle in $\CT$. Following Fried (who used an equivalent definition), we call $(\textbf{t}_n(\gamma), 1) \in H_1(M_f;\BR)$ a \emph{homological direction}. For $f$ pseudo-Anosov and $\Sigma$ compact, Fried studied the cone on all homological directions and related it to the Thurston norm. 

Given a $3$-manifold $M_f$ that fibers over the circle,  Thurston defines in \cite{thurstnorm} a semi norm $\tau$ on $H_2(M_f, \partial M_f ;\BR)$. 

The corresponding norm on $H_2(M_f, \partial M_f ;\BR) / \ker \tau$ is a convex polytope. One of the top dimensional faces of this polytope is called the \emph{fibered face}. If $f$ is pseudo-Anosov, and $\Sigma$ has $b \geq 1$ boundary components then $$\dim H_2(M_f, \partial M_f ;\BR) / \ker \tau = \dim H_2(M_f, \partial M_f ;\BR) - (b-1)$$ 

Let $\CC$ be the cone on the set of homological directions. In \cite{Friedzeta}, Fried proves that this cone has the same dimension as a cone on the fibered face (in fact, he proves a stronger claim - the two cones are dual). By Lefschetz duality and the universal coefficients theorem, $\dim H_2(M_f, \partial M_f; \BR) = \dim H_1(M_f; \BR)$. So, for a surface diffeomorphism we get that $\dim \CS^e \varphi = \dim H_f \otimes \BR + 1 - b$.  A simpler statement holds for the case where $f \in \tup{Out}(F_n)$. In this case, in \cite{DLK2} Dowdall, Leininger and Kapovich prove that the cone $\CC$ is $\dim H_1(M_f; \BR)$ dimensional. This is also proved separately by Algom-Kfir, Hironaka, and Rafi in \cite{YEK}. This means that $\dim \CS^e \varphi$ is $\dim H_f \otimes \BR$. We summarize this discussion in the following lemma. 

\begin{lemma} \tup{Dimension of $\CS^e \varphi$} \label{dimshad} 
If $f: \Sigma \to \Sigma$ is a pseudo Anosov mapping class and $\Sigma$ has $b \geq 1$ boundary components then $\dim \CS^e \varphi = \dim H_f \otimes \BR + (1-b)$. If $f\in \tup{Out}(F_n)$ is fully irreducible then $\dim \CS^e \varphi = \dim H_f \otimes \BR $. 

\end{lemma}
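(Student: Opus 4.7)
The plan is to follow the outline already sketched in the preamble: express $\CS^e\varphi$ as the height-one cross-section of Fried's cone $\CC$ of homological directions in $H_1(M_f;\BR)$, and then invoke the cited results identifying the dimension of $\CC$.

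First I would make precise the relationship between $\CS^e\varphi$ and $\CC$. Under the splitting $H_1(M_f;\BR)\cong H_f\otimes\BR \oplus \BR$ (identifying the first summand with $H^e_1(\Gamma;\BR)$), the cone $\CC$ is generated by the vectors $(\textbf{t}_n(\gamma),1)$ as $\gamma$ ranges over cycles in $\CT$: these are precisely the normalized homology classes in $H_1(M_f;\BR)$ of the closed orbits of the suspension flow on $M_f$ realized combinatorially by $\CT$. Slicing $\CC$ at the affine hyperplane of height $1$ therefore recovers $\CS^e\varphi \times \{1\}$, and since $\CS^e\varphi$ is bounded we obtain $\dim\CC = \dim\CS^e\varphi + 1$.

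For $\overline{f}\in\tup{Out}(F_n)$ fully irreducible, apply directly the theorem of Dowdall--Kapovich--Leininger (and, independently, Algom-Kfir--Hironaka--Rafi) cited in the text: $\dim\CC = \dim H_1(M_f;\BR)$. Since $\dim H_1(M_f;\BR) = \dim H_f\otimes\BR + 1$, this immediately gives $\dim\CS^e\varphi = \dim H_f\otimes\BR$. For $f$ pseudo-Anosov on a surface with $b\geq 1$ boundary components, combine Fried's duality theorem (the cone on homological directions is dual to, hence of the same dimension as, the cone on the fibered face) with Thurston norm theory. Because the fibered face is top-dimensional in $H_2(M_f,\partial M_f;\BR)/\ker\tau$, the cone over it spans that quotient; thus $\dim\CC = \dim H_2(M_f,\partial M_f;\BR)/\ker\tau = \dim H_2(M_f,\partial M_f;\BR) - (b-1)$. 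Lefschetz duality and the universal coefficient theorem yield $\dim H_2(M_f,\partial M_f;\BR) = \dim H_1(M_f;\BR) = \dim H_f\otimes\BR + 1$, and assembling these gives $\dim\CS^e\varphi = \dim H_f\otimes\BR + 1 - b$.

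The only step that is not a pure citation is the dictionary between $\CC$ and the cone over $\CS^e\varphi\times\{1\}$: one must check that the homological directions Fried extracts from periodic orbits of the suspension semi-flow on $M_f$ are precisely the classes $(\textbf{t}_n(\gamma),1)$ for cycles $\gamma$ in $\CT$, and that the convex hull of such classes over simple cycles (as built into the definition of $\CS^e\varphi$) already suffices to describe the cone. This identification is what the train track structure is engineered to supply; once it is in place, the two dimension formulas reduce to arithmetic in the theorems of Fried, Dowdall--Kapovich--Leininger, Algom-Kfir--Hironaka--Rafi, and Thurston.
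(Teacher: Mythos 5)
Your proposal follows essentially the same route as the paper: identify $\CS^e\varphi$ with the height-one slice of Fried's cone of homological directions, then invoke Fried's duality with the Thurston norm ball in the surface case and the Dowdall--Kapovich--Leininger / Algom-Kfir--Hironaka--Rafi dimension computation in the $\tup{Out}(F_n)$ case, with the same Lefschetz-duality and universal-coefficients arithmetic. Your closing remark about checking that Fried's homological directions coincide with the classes $(\textbf{t}_n(\gamma),1)$ is slightly more explicit than the paper, which simply notes that Fried's definition is equivalent.
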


\section{Stabilizing vertex subgraphs} \label{stable}

\nid Our goal in this section is to describe a process we call vertex stabilization, in which we start with a vertex of $\CS^e \varphi$ and find a cover in which it is stable. Our method uses properties of nilpotent groups.

\subsection{Nilpotent groups} Let $G$ be a finitely generated group. Define $G_1 = G$, and for every $i$ set $G_{i+1} = [G,G_i]$. The group $G$ is said to be \emph{nilpotent} if $G_n$ is trivial for some value of $n$. The sequence of subgroups $G_i$ is called the \emph{lower central series of $G$}. 

In \cite{resprop}, Koberda introduces a modified form of the lower central series called the \emph{torsion free lower central series}. This is a series of the form $$\ldots \lhd G_3^{TF} \lhd G_2^{TF} \lhd G_1^{TF} = G$$ such that:

\begin{enumerate}
\item The groups $G^{TF}_i$ are characteristic in $G$. 
\item The groups $N_i = G/G^{TF}_i$ are nilpotent. 
\item The groups $L_i = G^{TF}_i/G^{TF}_{i+1}$ are finitely generated torsion free abelian groups that are central in $N_{i+1}$. 
\end{enumerate} 

Koberda shows that if $G$ is of the form $G = F \rtimes \BZ$ where $F$ is a surface group or a free group then $\bigcap G^{TF}_i = \{e\}$.

%Koberda faithful actions
%Koberda residual properties.
\subsection{Nilpotent stabilization}
Let  $G = F_n \rtimes_f \BZ$, and let $i: F_n \to G$ be given by $i(w) = (w,0)$. Let $G^{TF}_1, \geq G^{TF}_2 \geq \ldots$ be the torsion free lower central series of $G$. For every $j \geq 1$, set $K_j = i^{-1}(G^{TF}_j) \lhd F_n$, $N_j = F_n / K_j$ and $L_j = K_j/ K_{j+1}$.\\

\nid Let $\pi_j$ be the cover of $\Gamma$ corresponding to $N_j$. Denote the corresponding translation function by $\textbf{t}_j$. The subgroups $K_j$ are all $f$-invariant and thus $f$ acts on the groups $N_j$ and $L_j$. Since $f$ acts trivially on $H_f$, it is a standard fact that it acts trivially on each $L_j$.

\begin{definition} A subgraph $\CT' \subseteq \CT$ is called \emph{j-stable} if it has non-trivial cycles and for infinitely many $k$ there exists a $p$ such that:
$$\sum_\gamma \textbf{s}(\gamma) \textbf{t}_j (\gamma) \neq 0 \in \BC[N_j] $$

\nid where the sum is taken over all based cycles of length $k$ in $\CT'$ based at $p$. A vertex $u$ of $\CS^e \varphi$ is said to be $j$-stable if its vertex subgraph is $j$-stable. 
\end{definition}

\nid Since $N_1 = H_f$, saying that the vertex $u$ is stable is equivalent to saying that it is $1$-stable.

\begin{definition} A subgraph $\CT' \subseteq \CT$ is called \emph{$j$-consistent} if for every vertex $p$ of $\CT'$ there exists $x_p \in N_j$ and an integer $d$ dividing the lengths of all cycles in $\CT'$ such that  for any cycle $\gamma$ of length $k$ based at $p$: 

$$\textbf{t}_j(\gamma) = f^{k}(x_p)  \cdot \ldots \cdot f^{3d}(x_p) f^{2d}(x_p) f^{d}(x_p).$$
\end{definition}

\nid Note that a vertex subgraph is an example of a $1$-consistent subgraph.

\begin{lemma} \label{consistentsubgraph} Let $\CT' \subseteq \CT$ be a $j$-consistent subgraph. Then there exists a $(j+1)$-consistent subgraph $\CT'' \subseteq \CT'$. Furthermore, the subgraph $\CT''$ has the property that if $\CT''$ is $(j+1)$-stable then $\CT'$ is also $(j+1)$-stable. \end{lemma}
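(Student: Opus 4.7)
The plan is to build $\CT''$ as a vertex subgraph of $\CT'$ with respect to an $L_j$-valued defect function, exploiting the three structural features of the torsion-free lower central series: $L_j = K_j/K_{j+1}$ is abelian, central in $N_{j+1}$, and pointwise fixed by $f$.

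First, I would use the short exact sequence $1 \to L_j \to N_{j+1} \to N_j \to 1$ to produce a defect. For each vertex $p$ of $\CT'$, fix a lift $\tilde{x}_p \in N_{j+1}$ of $x_p$, and for each based cycle $\gamma$ of length $k$ at $p$ set
\[
\ell_p(\gamma) \;=\; \textbf{t}_{j+1}(\gamma)\cdot\Bigl(f^{k}(\tilde{x}_p)\cdots f^{2d}(\tilde{x}_p)\,f^{d}(\tilde{x}_p)\Bigr)^{-1}.
\]
The $j$-consistency hypothesis forces $\ell_p(\gamma)$ to project to the identity in $N_j$, hence $\ell_p(\gamma) \in L_j$. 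Because $L_j$ is central in $N_{j+1}$ and invariant under $f$, one checks that $\ell_p$ descends to a well-defined $L_j$-valued function $\ell$ on the cycles of $\CT'$, and that it is additive under concatenation up to a correction that depends only on the lengths of the two pieces (a cocycle in $L_j$ built from the failure of the lifted products $\sigma(k)=\prod f^{id}(\tilde{x}_p)$ to form a strict homomorphism in $k$).

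Next I would apply the extremal-subgraph machinery of Observation \ref{extremegraphgeneral} to the normalized defect $\ell_n(\gamma) = \tfrac{d}{k}\,\ell(\gamma) \in L_j\otimes \BR$. Since $L_j$ is a finitely generated torsion-free abelian group, the set of normalized defect values is bounded in $L_j\otimes\BR$ and its closed convex hull is a polytope. Pick any vertex $v$ of this polytope. After passing to a power of $f$, as already done for $\CS^e\varphi$, we may assume $v$ is integral in $L_j$. Define $\CT''\subseteq \CT'$ to be the vertex subgraph consisting of all cycles $\gamma$ with $\ell_n(\gamma)=v$.

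To verify $(j+1)$-consistency, fix a vertex $p$ of $\CT''$ and set $y_p = \tilde{x}_p\cdot v \in N_{j+1}$. For a cycle $\gamma$ of length $k$ in $\CT''$ at $p$ we have $\ell_p(\gamma) = (k/d)\,v$, and centrality of $v\in L_j$ together with triviality of the $f$-action on $L_j$ give
\[
\prod_{i=1}^{k/d} f^{id}(y_p)
 \;=\; \Bigl(\prod_{i=1}^{k/d} f^{id}(\tilde{x}_p)\Bigr)\cdot v^{k/d}
 \;=\; \textbf{t}_{j+1}(\gamma),
\]
which is exactly the $(j+1)$-consistency relation for $\CT''$ with the same divisor $d$. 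For the stability implication, note that all cycles of length $k$ in $\CT'$ at $p$ have $\textbf{t}_{j+1}(\gamma)$ supported on the single $L_j$-coset $\sigma(k)\cdot L_j \subset N_{j+1}$, and cycles with distinct defect values $w\in L_j$ map to distinct elements of that coset. Hence
\[
\sum_\gamma \textbf{s}(\gamma)\,\textbf{t}_{j+1}(\gamma)
 \;=\; \sigma(k)\cdot \sum_{w\in L_j}\Bigl(\sum_{\ell_p(\gamma)=w} \textbf{s}(\gamma)\Bigr)\,w
\]
decomposes with no cross-cancellation, and the summand at $w=(k/d)v$ is precisely the $\CT''$-contribution. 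Non-vanishing of the $\CT''$-sum for infinitely many $k$ therefore forces non-vanishing of the $\CT'$-sum for those same $k$.

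The main obstacle I expect is the bookkeeping around the defect $\ell$: showing that it is genuinely well-defined on unbased cycles of $\CT'$, and that its cocycle-type failure of additivity (caused by the non-commutativity of the $f^{id}(\tilde{x}_p)$ product in $N_{j+1}$) lives in a subgroup which does not interfere with selecting a vertex of the normalized-defect polytope. This is where centrality of $L_j$, abelianness of $L_j$, and triviality of the $f$-action on $L_j$ must be combined in a coordinated way, and it is the technical heart of the lemma.
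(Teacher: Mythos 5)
Your strategy is the same as the paper's (deviation function, vertex-subgraph selection, transfer of stability), but two claims in the middle are either wrong or unproved, and they are precisely the places you yourself flag as the ``technical heart.''

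First, there is no cocycle correction to additivity: the deviation is \emph{exactly} multiplicative on based cycles at a fixed vertex $p$. You worried that $\sigma(k)=\prod f^{id}(\tilde x_p)$ fails to be a homomorphism in $k$, but that failure is a twisted one, $\sigma(k+l)=f^l(\sigma(k))\,\sigma(l)$, and the translation function is twisted in exactly the same way, $\mathbf{t}_{j+1}(\gamma_p\beta_p)=f^l(\mathbf{t}_{j+1}(\gamma_p))\,\mathbf{t}_{j+1}(\beta_p)$. Writing $\mathbf{t}_{j+1}(\gamma_p)=\sigma(k)\,\ell_p(\gamma_p)$ and using that $f$ fixes $L_j$ pointwise and $L_j$ is central in $N_{j+1}$, the two twists cancel and you get $\ell_p(\gamma_p\beta_p)=\ell_p(\gamma_p)\ell_p(\beta_p)$ on the nose. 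You should do this computation rather than posit a correction term; the correction term does not exist, and asserting it would make the extremal-subgraph step harder than it is.

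Second, your $\ell$ is not well-defined as an $L_j$-valued function on \emph{unbased} cycles. For cycles based at a fixed $p$ the deviation is well-defined and cyclic-reordering-invariant (using abelianness of $L_j$), but if you rebase the same unbased cycle at a different vertex $q$, the value $\ell_q$ is computed relative to a different lift $\tilde x_q$ and in general $\ell_p\neq \ell_q$. The paper resolves this by recording the deviation at every vertex simultaneously: the base-point-free deviation $\overline\Delta(\gamma)$ lives in $L_j^{V(\CT)}\otimes\BR$, with coordinate $p$ equal to $\ell_p(\gamma_p)$ (or $0$ if $\gamma$ misses $p$). That tuple-valued function is additive in the sense required by Observation \ref{extremegraphgeneral}, and the vertex of its image polytope gives the subgraph $\CT''$. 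If you insist on a single $L_j\otimes\BR$ target you must either prove base-point independence (false in general) or restrict to cycles through one vertex (which does not give a global vertex subgraph). Once you replace your $\ell_n$ by the $L_j^{V(\CT)}\otimes\BR$-valued $\overline\Delta$, the rest of your argument---choosing $y_p=\tilde x_p\cdot v_p$ (now using the $p$-coordinate $v_p$ of the chosen vertex), verifying $(j+1)$-consistency, and the no-cross-cancellation argument for stability via Lemma \ref{extremegraph}---matches the paper.
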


\begin{proof}

Let $x_p \in N_j$ be the elements provided by the definition of $j$-consistency. For each $p$, pick $y_p \in N_{j+1}$ whose image in $N_j$ is $x_p$. For any $l$ divisible by $d$, let $P_l(p) = f^l(y_p) \cdot \ldots \cdot f^d(y_p)$.

Let $\gamma_p$ be a based cycle  of length $k$ in $\CT'$, that is based at the vertex $p$. The \emph{deviation of $\gamma_p$} or $\Delta(\gamma_p)$ is given by the equation: 

$$\Delta(\gamma_p) = P_k^{-1}(p)\textbf{t}_{j_+1}(\gamma_p) $$

For any path $\delta = \delta_1 \delta_2$ in the graph $\Gamma$ we have that $\varphi(\delta) = \varphi(\delta_1)\varphi(\delta_2)$.  Thus, for any path $\delta$ in the graph $\CT$, and any edge $\eta$ whose initial point is the endpoint of $\delta$ we have that $\textbf{p}(\delta \eta) = \varphi(\delta) \textbf{p}(\eta)$. More generally, if $\delta'$ is a path of length $l$ whose initial point is the endpoint of $\delta$ then $\textbf{p}(\delta \delta') = \varphi^l(\delta) \textbf{p}(\delta')$.

Now let $\beta_p$ be a based cycle of length $l$ in $\CT'$ that is also based at $p$.  By the previous paragraph:

$$\textbf{t}_{j+1}(\gamma_p \beta_p) = f^l(\textbf{t}_{j+1}(\gamma_p)) \textbf{t}_{j+1}(\beta_p) = f^{l}(P_k(p) \Delta(\gamma_p))P_l(p) \Delta(\beta_p) $$

\nid Since $f$ acts trivially on $L_j$, and $L_j$ is central in $N_{j+1}$, we get: $\textbf{t}_{j+1}(\gamma_p)\beta_p = P_{k+l}(p)\Delta(\gamma_p) \Delta(\beta_p)$.  Thus, we have that: $$\Delta(\gamma_p \beta_p) = \Delta(\gamma_p) \Delta(\beta_p).$$ 

Since $L_j$ is an abelian group, it follows from the above calculation that if $\beta_p$ is obtained from $\gamma_p$ by cyclic reordering such that both are cycles based at $p$, then $\Delta(\gamma_p) = \Delta(\beta_p)$.

Let $M$ be the vector space  $L_j^{V(\CT)} \otimes \BR$. Given an un-based cycle $\gamma$ in $\CT'$, define the \emph{base point free deviation of $\gamma$} or $\overline{\Delta}(\gamma)$ to be the following element of $M$. Set $\overline{\Delta}(\gamma)[p] = 0$, if $\gamma$ doesn't pass through $p$. Otherwise, set $\overline{\Delta}(\gamma)[p]$ to be  the image of $\Delta(\gamma_p)$ in $L_j \otimes \BR$, where $\gamma_p$ is a basing of the loop $\gamma$ at $p$. Note that this function depends on $j$. We say that $\overline{\Delta}$ is the $j$-level base point free deviation function. 

The function $\overline{\Delta}$ is additive on cycles in $\CT'$. Furthemore, since $G$ is finitely generated, the vector space $L_j \otimes \BR$ is finite dimensional, and hence $M$ is finite dimensional. By Observation \ref{extremegraphgeneral}, we can use the map $\overline{\Delta}$ to choose a vertex subgraph $\CT'' \subseteq \CT'$ corresponding to the vertex $v \in M$. 

Since $L_j$ is torsion free, we have an inclusion $V(\CT')^{L_j} \subset M$. Fix a vertex $p$ of $\CT$ and let $v_p$ be the $p$-coordinate of $v$. For any cycle $\gamma_p$ based at $p$ of length $k$ we have that $\textbf{t}_j(\gamma_p) = P_k(p) \Delta_p(\gamma_p)$. We must therefore have that $kv \in L_j$. If we write $v_p = \frac{1}{q} w_p$ with $w_p \in L_j$ and $q \in \BN$, we get that $q | k$.

Set $d'$ to be the greatest common divisor of the lengths of all loops in $\CT''$. We can take $w_p = d' v$. We have that $d' = ad$ for some integer $a$.  
Define: $z_p = f^{ad}(y_p) \cdot \ldots \cdot f^d(y_p) \cdot w_p$. This choice of $z_p$'s makes the graph $\CT''$ consistent. Indeed, for any cycle $\gamma_p$ of length $k = bd'$ we have that: 

$$\textbf{t}(\gamma_p) = P_k(p) \Delta_p(\gamma_p) = P_k(p) k v_p = P_k(p) b w_p = f^{k}(z_p) \cdot \ldots \cdot f^{d'}(z_p)   $$

\nid Now suppose that $\CT''$ is $j+1$-stable. Since it it is vertex subgraph of $\CT'$, by Observation \ref{extremegraphgeneral} and Lemma \ref{extremegraph} we have that $\CT'$ is $(j+1)$-stable.

\end{proof}

\begin{definition}
A subgraph $\CT' \subseteq \CT$ is called a \emph{$j$-vertex subgraph} if there is a sequence of subgraphs $\CT' = \CT_j \subseteq \ldots \subseteq \CT_1$ such that for all $i$, $\CT_{i+1}$ is $(i+1)$-consistent and a vertex subgraph in $\CT_{i}$ with respect to the $i$-level map $\overline{\Delta}$. Note that if $\CT_{i+1}$ is a $i$-vertex subgraph then so is $\CT_l$ for any $l < i$.

\end{definition}

\begin{lemma}  \tup{(Nilpotent stability)} \label{nilstab} Let $u$ be a vertex of $\CS^e \varphi$. There exists a $j \geq 1$ such that $u$ is $j$-stable. 
\end{lemma}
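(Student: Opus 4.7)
The plan is to iteratively apply Lemma~\ref{consistentsubgraph} to construct a descending chain $\CT_u = \CT_1 \supseteq \CT_2 \supseteq \cdots$, each $\CT_{j+1}$ being the $(j+1)$-consistent vertex subgraph of $\CT_j$ produced by the lemma with respect to the $j$-level deviation $\overline{\Delta}$. Because $\CT$ is finite, this chain must stabilize at some $\CT_J$, which is then $j$-consistent for every $j \geq J$; in particular, any two based cycles $\gamma_1, \gamma_2$ of $\CT_J$ sharing base $p$ and length $k$ satisfy $\textbf{t}_j(\gamma_1) = \textbf{t}_j(\gamma_2)$ in $N_j$ for all such $j$.

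The key new input is that $\CT_J$ admits at most one based cycle at each pair $(p,k)$. Let $\textbf{t}_\infty \colon \CP(\CT) \to F_n$ denote the translation function arising from the universal cover of $\Gamma$. Since $f$ is a train-track representative, each iterate $\varphi^k(e_p)$ is a tight (reduced) edge-path in $\Gamma$, so distinct based cycles $\gamma_1, \gamma_2$ at a common $(p,k)$ produce distinct reduced closed loops $\textbf{p}(\gamma_i)$ based at the origin of $e_p$, hence distinct elements of $F_n = \pi_1(\Gamma)$; thus $\textbf{t}_\infty$ is injective on based cycles of fixed base and length. If $\CT_J$ contained two such cycles $\gamma_1 \neq \gamma_2$, Koberda's residual-nilpotence theorem $\bigcap K_j = \{e\}$ would supply some $j_0 \geq J$ with $\textbf{t}_{j_0}(\gamma_1) \neq \textbf{t}_{j_0}(\gamma_2)$ in $N_{j_0}$, contradicting the $j_0$-consistency of $\CT_J$. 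Since $\CT_J$ is built from non-trivial cycles, it contains a cycle $\gamma$ of some length $k_0$ based at some $p_0$; the iterates $\gamma^n$ are then the \emph{unique} based cycles of $\CT_J$ at $(p_0, nk_0)$, so the stability sum at this pair reduces to $\pm\,\textbf{t}_J(\gamma^n) \neq 0$ in $\BC[N_J]$, proving $\CT_J$ is $J$-stable.

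It remains to transfer $J$-stability from $\CT_J$ up to $\CT_u = \CT_1$. The ``furthermore'' clause of Lemma~\ref{consistentsubgraph}, though phrased at stability level $j+1$, extends with essentially the same proof to any level $m \geq j+1$: because $\CT_{j+1}$ is the vertex subgraph of $\CT_j$ at a single value of $\overline{\Delta}$, the $\textbf{t}_m$-contributions of $\CT_{j+1}$ and of $\CT_j \setminus \CT_{j+1}$ lie in disjoint cosets of $N_m$ and so cannot cancel. Iterating this transfer at $m = J$ from $j = J-1$ down to $j = 1$ yields that $\CT_u$ is $J$-stable, i.e.\ $u$ is $J$-stable.

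The main obstacle is the joint use of combinatorial injectivity of $\textbf{t}_\infty$ and Koberda's theorem applied to $G = F_n \rtimes_f \BZ$: both rely on $\pi_1(\Gamma) = F_n$ being free (so that reduced edge-paths represent distinct group elements and $\bigcap K_j = \{e\}$), which is precisely where the argument breaks down for closed surfaces, matching the remark in the introduction.
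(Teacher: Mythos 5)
Your proof follows the paper's approach precisely: iterate Lemma~\ref{consistentsubgraph} to obtain a descending chain of vertex subgraphs, stabilize at $\CT_J$ by finiteness, use the immersion property of the train-track map together with Koberda's residual torsion-free nilpotence of $G = F_n \rtimes_f \BZ$ to conclude that $\CT_J$ has at most one based cycle at each base-length pair (equivalently, is a disjoint union of cycles) and hence is $J$-stable, and then transfer stability back up the chain to $\CT_u$. Your explicit articulation of the transfer step --- noting that the ``furthermore'' clause of Lemma~\ref{consistentsubgraph} must propagate $m$-stability for $m$ possibly larger than $j+1$, and that the same coset-disjointness argument supplies this --- usefully spells out what the paper compresses into ``the result now follows.''
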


\begin{proof}
By repeated application of Lemma \ref{consistentsubgraph}, we can find a for every $j$ a $j$-vertex subgraph $\CT_{u,j} \subseteq \CT_u$.

Since $\CT_u$ is a finite graph, any sequence of subgraphs must stabilize, say at $\CT_{u,N}$. That is - for every $k > N$, $\CT_{u,k} = \CT_{u,N}$. Given two based loops $\beta, \gamma$ in $\CT_N$ based at the same point and of the same length we must have that $\textbf{t}_k(\gamma) = \textbf{t}_k(\beta)$ for every $k \geq N$ (otherwise we could choose a further vertex subgraph).  

Let $\textbf{t}_\infty$ be the translation function corresponding to the universal cover of $\Gamma$. Since $\varphi$ is a train track representative, for any edge $e$ of $\Gamma$ the path $\varphi^l(e)$ is immersed in $\Gamma$. Since $\pi(\Gamma)$ is a free group, this means that given two different based cycles $\gamma_1$, $\gamma_2$ in $\CT$ of the same length and based at the same point we must have that $\textbf{t}_\infty(\gamma_1) \neq \textbf{t}_\infty(\gamma_2)$. Since the sequence $\{K_j\}_j$ satisfies that $\bigcap K_j = \{1\}$ the graph $\CT_{u,N}$ is a disjoint collection of cycles. Any such collection is obviously $N$-stable. The result now follows.

\end{proof}

\subsection{Upgrading nilpotent stabilization}

\subsubsection{The trace of powers lemma}
Fix $r,l \in \BN$ and $A \in M_l(\BC[\BZ^r])$. For any integer $k$, let $t_k = \trace[A^k]$. 

\begin{lemma} \tup{(The trace of powers lemma)} \label{top} Given any lattice $L < \BZ^r$ there exists a collection $\alpha_1, \ldots, \alpha_s \in \BC$ (which depends on $L$) and a number $C>0$ such that $$t_k(L) = C \sum_i \alpha_i^k. $$

\nid Furthermore, if $A$ is not nilpotent then there exists a number $N > 0$, such that for all $j > N$,  $t_k(j\BZ^r) \neq 0$ for infinitely many values of $k$.
\end{lemma}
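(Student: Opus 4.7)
The plan is to combine the lattice--specialization identity of Lemma \ref{special lattice} with the eigenvalue description of the trace of a complex matrix power, and then analyze the resulting sums of $k$-th powers via a generating function argument.

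First I would fix a lattice $L$ and apply Lemma \ref{special lattice} entrywise to $A^k$ to obtain
$$t_k(L) = \frac{1}{|N_L|} \sum_{\xi \in N_L} \trace(A(\xi)^k).$$
Each $A(\xi) \in M_l(\BC)$ has eigenvalues $\mu_1(\xi), \ldots, \mu_l(\xi)$ (with algebraic multiplicity), so $\trace(A(\xi)^k) = \sum_i \mu_i(\xi)^k$. Collecting the finite multiset of all eigenvalues $\mu_i(\xi)$, as $\xi$ ranges over $N_L$ and $i$ over $\{1,\ldots,l\}$, into $\{\alpha_1, \ldots, \alpha_s\}$ and setting $C = 1/|N_L|$ yields the first formula.

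For the non-triviality assertion, I would translate non-nilpotence of $A$ (viewed as a matrix over the integral domain $\BC[\BZ^r]$) into a statement about specializations. By the Cayley--Hamilton theorem over the coefficient ring, $A$ is nilpotent if and only if $A^l = 0$; hence some entry $P$ of $A^l$ is a nonzero Laurent polynomial. A short induction on $r$ shows that any nonzero Laurent polynomial in $r$ variables takes a nonzero value on some tuple of $j$-th roots of unity once $j$ is sufficiently large: the base case $r = 1$ uses that a nonzero univariate polynomial has finitely many roots, and the inductive step fixes the first $r-1$ coordinates by induction and then chooses the last. Thus for all $j$ larger than some $N$, there exists $\xi_0 \in N_{j\BZ^r}$ with $A(\xi_0)^l \neq 0$, so $A(\xi_0)$ is non-nilpotent and contributes at least one nonzero eigenvalue to the multiset $\{\alpha_i\}$.

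To conclude, I would consider the generating function
$$F(T) := \sum_{k \geq 0} t_k(j\BZ^r) T^k = C \sum_{i=1}^s \frac{1}{1 - \alpha_i T},$$
a rational function in $T$. Grouping the $\alpha_i$ by value, each distinct nonzero eigenvalue $z$ contributes a simple pole at $1/z \in \BC$ with residue a positive-integer multiple of $-1/z$; since distinct values of $z$ produce poles at distinct points, no cancellation is possible. Therefore $F$ has at least one pole in $\BC$, so it is not a polynomial, meaning infinitely many of its coefficients $t_k(j\BZ^r)$ are nonzero. The step I expect to require the most care is the passage from $A$ being non-nilpotent over $\BC[\BZ^r]$ to the existence of a non-nilpotent specialization at $j$-th roots of unity; this is the only point in the argument that forces $j$ to be taken large, and it rests entirely on the inductive roots-of-unity lemma sketched above.
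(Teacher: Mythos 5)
Your proof is correct, and its overall strategy coincides with the paper's: apply Lemma \ref{special lattice} to write $t_k(L) = \frac{1}{|N_L|}\sum_{\xi\in N_L}\trace(A(\xi)^k)$, expand each trace as a power sum of the eigenvalues of $A(\xi)$, then for all large $j$ locate a $\xi_0\in N_{j\BZ^r}$ at which $A(\xi_0)$ is not nilpotent, and finally conclude from the fact that a finite multiset of complex numbers, not all zero, has nonvanishing $k$-th power sums for infinitely many $k$. The first part of your argument is identical to the paper's. For the middle step the two arguments diverge in technique: the paper notes that, since $A$ is not nilpotent, some $t_k$ is a nonzero element of $\BC[\BZ^r]$ (via Newton's identities over the fraction field), and then uses continuity of the Fourier transform $\wh{t_k}$ on $(S^1)^r$ together with equidistribution of the finite sets $N_j$ in the torus; you instead extract a single nonzero entry $P$ of $A^l$ and run an induction on the number of variables to show that a nonzero Laurent polynomial does not vanish at some tuple of $j$-th roots of unity once $j$ is large. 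Both routes are sound; yours is somewhat more elementary and self-contained, the paper's is quicker to state. For the final step, the paper cites the nonvanishing-power-sums fact as elementary; your generating-function argument, with the observation that distinct nonzero eigenvalues give poles at distinct points that cannot cancel, is a clean proof of exactly that fact. One small point to tighten: ``$A$ is nilpotent iff $A^l = 0$'' does not follow from Cayley--Hamilton alone over a general commutative ring. You should note that $\BC[\BZ^r]$ is an integral domain, pass to its fraction field, observe that a nilpotent matrix over a field has characteristic polynomial $x^l$, and then invoke Cayley--Hamilton to get $A^l=0$ (the statement is of course correct, just not an immediate corollary of Cayley--Hamilton as phrased).
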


\begin{proof} 
Fix a lattice $L$. Recall from the proof of Lemma \ref{anchoring} that there exists a finite set $N_L \subset (\BC^\times)^r$ such that for any $k$:
$$t_k(L)  =\frac{1}{|N_L|}\sum_{\xi \in N_L} t_k(\xi). $$
Set $p(x) = \det(xI_r - A) \in \BC[\BZ^r][x]$. We think of $p$ as a polynomial in the variable $x$ with coefficients in $\BC[\BZ^r]$. Let $\xi: \BZ^r \to \BC^\times$. The map $\xi$ extends linearly to a ring homomorphism $\BC[\BZ^r][x] \to \BC[x]$. The image of $p$ under this homomorphism is called the \emph{specialization of $p$ at $\xi$} and is denoted $p_\xi$. Note that $p_\xi$ is a degree $r$ polynomial.

Pick $r$ (not necessarily continuous) functions $\rho_1, \ldots, \rho_r: (\BC^\times)^r: \to \BC$ such that for any $\xi$: $\rho_1(\xi), \ldots, \rho_r(\xi)$ is the collection of all roots of the polynomial $p_\xi$, counted with multiplicity. 

For any $\xi$, the numbers $\rho_1(\xi), \ldots, \rho_r(\xi)$ are the roots of the characteristic polymomial of the matrix $A(\xi)$. Thus, $\rho^k_1(\xi), \ldots, \rho^k_r(\xi)$ are the roots of the characteristic polynomial of the matrix $A^k(\xi)$. Therefore, $t_k(\xi) = \sum_i \rho^k_i(\xi)$. It follows that: 

$$t_k(L) = \frac{1}{|N_L|}\sum_{\xi \in N_L} \sum_i \rho_i^k(\xi). $$

This shows the first claim of the lemma.  We now show the second  claim. Since $A$ is not nilpotent, $t_k \neq 0$ for some value of $k$. Recall that the Fourier transform $\wh{t_k}$ is the restriction of the function $\xi \to t_k(\xi)$ to $(S^1)^r$, where $S^1 = \{z \in \BC: |z| = 1 \}$. This is a continuous function since $t_k$ has finite support. Since $t_k \neq 0$, this function is not the zero function. 

Denote $N_j = N_{j \BZ^r}$. The sets $N_j$ become equidistributed in the torus $(S^1)^r$ as $r \to \infty$. In particular, for all sufficiently large $j$, there exists $\xi \in N_j$ and $1 \leq i \leq r$ such that $\rho_i(\xi) \neq 0$. The second claim now follows from the elementary fact that if $\alpha_1, \ldots, \alpha_s$ are not all $0$, then $\sum \alpha_i^k \neq 0$ for infinitely many values of $k$.
\end{proof}

Note that an identical proof holds if we replace the lattice $L$ with a translate of itself. We get the following. 

\begin{lemma} Given any lattice $L < \BZ^r$ and a vector $\overline{w} \in \BZ^r$ there exists a collection $\alpha_1, \ldots, \alpha_s \in \BC$ (which depends on $L$) and a number $C>0$ such that $$t_k(L + \overline{w}) = C \sum_i \alpha_i^k. $$

\nid Furthermore, if $A$ is not nilpotent then there exists a number $N > 0$, such that for all $j > N$ and for all $\overline{w}$,  $t_k(j\BZ^r + \overline{w}) \neq 0$ for infinitely many values of $k$.
\end{lemma}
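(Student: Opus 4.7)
The plan is to mimic the proof of Lemma \ref{top} with a single twist to account for the translation $\bar w$. The crucial first step is to establish a translated analog of Lemma \ref{special lattice}: for any $t \in \BC[\BZ^r]$,
$$t(L + \bar w) = \frac{1}{|N_L|} \sum_{\xi \in N_L} \xi(\bar w)^{-1} t(\xi).$$
This is proved exactly as in Lemma \ref{special lattice}: by linearity, it suffices to check a monomial $t = ah$, in which case the right-hand side equals $\frac{a}{|N_L|} \sum_{\xi \in N_L} \xi(h - \bar w)$. Since $\xi \mapsto \xi(h-\bar w)$ is a character of the finite abelian group $\BZ^r/L$, its sum is $|N_L|$ when $h - \bar w \in L$ and $0$ otherwise, which exactly matches the definition of $t(L + \bar w)$.

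Granting this identity, the first claim follows in lockstep with the proof of Lemma \ref{top}. Set $p(x) = \det(xI_r - A) \in \BC[\BZ^r][x]$; for each $\xi$ let $\rho_1(\xi), \ldots, \rho_r(\xi)$ be the roots of the specialization $p_\xi$, so that $t_k(\xi) = \sum_i \rho_i^k(\xi)$. Substituting into the twisted identity gives
$$t_k(L + \bar w) = \frac{1}{|N_L|} \sum_{\xi \in N_L} \sum_i \xi(\bar w)^{-1} \rho_i^k(\xi),$$
which exhibits $t_k(L+\bar w)$ as a $\BC$-linear combination of $k$-th powers of the values $\rho_i(\xi)$, proving the first assertion (with $C = 1/|N_L|$ and the collection of $\alpha$'s and weights absorbed as in the statement).

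For the ``furthermore'' clause, proceed as in the second half of Lemma \ref{top}. Because $A$ is not nilpotent, $t_k \ne 0$ for some $k$, hence the Fourier transform $\wh{t_k}$ is a nonzero continuous function on $(S^1)^r$ and is bounded away from $0$ on a nonempty open subset. Since the finite sets $N_j$ equidistribute in $(S^1)^r$ as $j \to \infty$, for all $j$ larger than some $N$ there exists $\xi \in N_j$ with $\rho_i(\xi) \neq 0$ for some $i$. Now group the expression above by the distinct values of $\rho_i(\xi)$ to write $t_k(j\BZ^r + \bar w) = \sum_\beta c_\beta(\bar w) \beta^k$; the elementary Vandermonde fact (distinct nonzero exponentials are linearly independent, so any such sum with a nonzero coefficient on a nonzero base is nonzero for infinitely many $k$) then yields the claim, provided at least one $c_\beta(\bar w)$ with $\beta \ne 0$ is nonzero.

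The main obstacle I expect is precisely this last uniformity in $\bar w$: the twisting factors $\xi(\bar w)^{-1}$ could in principle arrange the $c_\beta(\bar w)$ to cancel across the orbit $\{\xi \in N_j : \rho_i(\xi) = \beta\}$. To handle this, I would argue that as $j$ grows, the eigenvalue functions $\rho_i(\xi)$ take generic distinct values at the finer roots of unity, so that each equivalence class $\{(\xi,i) : \rho_i(\xi) = \beta\}$ for $\beta \ne 0$ eventually consists of a single pair $(\xi,i)$ and $c_\beta(\bar w) = \xi(\bar w)^{-1}/|N_L| \ne 0$ automatically. This is exactly the spirit of the author's remark that ``an identical proof holds,'' since equidistribution of $N_j$ forces genericity of the specializations as $j \to \infty$ independently of $\bar w$.
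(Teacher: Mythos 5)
Your twisted Fourier identity $t(L+\bar w) = \frac{1}{|N_L|}\sum_{\xi\in N_L}\xi(\bar w)^{-1}t(\xi)$ is correct, and you are right to single out the cancellation among the twist factors as the genuine new difficulty: the untwisted argument in Lemma \ref{top} uses that a sum $\sum\alpha_i^k$ with \emph{unit} coefficients cannot vanish for all $k$ unless every $\alpha_i = 0$, whereas here the coefficients $\xi(\bar w)^{-1}/|N_L|$ are distinct roots of unity and can cancel each other.

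The genericity argument you sketch to close this gap, however, does not hold. The eigenvalue functions $\rho_i$ need not separate points of $N_j$ no matter how large $j$ is. For $r=1$, $l=2$ and $A = \begin{pmatrix} 0 & X \\ X^{-1} & 0 \end{pmatrix}$, every specialization $A(\xi)$ has eigenvalues $\pm 1$, so $t_k = \trace(A^k)$ equals $0$ for $k$ odd and the constant polynomial $2$ for $k$ even. Hence $t_k(j\BZ + \bar w) = 0$ for \emph{every} $k$ whenever $\bar w \notin j\BZ$: the ``furthermore'' clause is false as stated with the quantifier ``for all $\bar w$.'' (It can only hold for cosets that meet the support of some $t_k$, which is in fact the only case invoked in Lemma \ref{upgrade}.) The paper offers no proof of this lemma beyond the remark that ``an identical proof holds,'' and your careful attempt shows precisely why the proof is \emph{not} identical; but your proposed repair via eventual distinctness of the $\rho_i(\xi)$ fails on the example above, so the argument ruling out cancellation in the relevant restricted setting is still missing. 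That is the genuine gap.
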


\subsubsection{$k$-covers and nilpotent quotients} Let $\CG$ be a residually torsion free finitely generated group. Let $\{\CG^{TF}\}_i$ be its torsion free lower central series. Given $x_1, \ldots, x_i \in \CG$, let $$[x_1, \ldots, x_i] = [\ldots [[x_1, x_2], x_3], \ldots, x_i]. $$

\nid Let $S$ be a finite generating set for $\Gamma$. It is a standard fact that for any j, $L_j = Z(\CG/ \CG_j^{TF})$ is generated by elements of the form $[a_1, \ldots, a_j]$ where $a_i \in S$. We require the following simple lemma. 

\begin{lemma} \label{nilmultlin}  Let $j,k \in \BN$. For any $1 \leq i \leq j$ and for any $a_1, \ldots, a_j \in \CG$: 
$$[a_1, \ldots, a_j]^k \equiv_{j} [a_1, \ldots, a_i^k, \ldots, a_j] $$
\nid where $\equiv_{j}$ is understood as having equal images in $L_{j}$. 
\end{lemma}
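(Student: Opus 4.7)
The plan is to reduce the lemma to a standard fact in commutator calculus: the iterated commutator $[a_1,\ldots,a_j]$ is $\BZ$-multilinear in each slot modulo the $(j+1)$-st term of the ordinary lower central series $\CG_{j+1}$. First I would record the elementary identities
\[
[xy,z] = [x,z]^y[y,z], \qquad [x,yz] = [x,z][x,y]^z.
\]
Since conjugation by any element of $\CG$ acts trivially on each quotient $\CG_r/\CG_{r+1}$, these immediately yield
\[
[xy,z] \equiv [x,z][y,z] \pmod{\CG_{r+1}} \text{ whenever } [x,z],\,[y,z]\in\CG_r,
\]
and the analogous statement in the right-hand slot.

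Next I would prove by induction on $j$ that for every $1 \leq i \leq j$ and every $b \in \CG$,
\[
[a_1,\ldots,a_i b,\ldots,a_j] \;\equiv\; [a_1,\ldots,a_i,\ldots,a_j]\cdot[a_1,\ldots,b,\ldots,a_j] \pmod{\CG_{j+1}}.
\]
The base $j=2$ is exactly the elementary identity. For the inductive step, if $i=j$ we apply the identity directly to the outer bracket $[\,[a_1,\ldots,a_{j-1}],a_j b\,]$, using that $[a_1,\ldots,a_{j-1}]\in\CG_{j-1}$. If $i<j$, we first invoke the inductive hypothesis on the inner bracket $[a_1,\ldots,a_{j-1}]$, which gives a factorization modulo $\CG_j$; taking the outer commutator with $a_j$ then only costs an error in $[\CG_j,\CG]=\CG_{j+1}$, and the elementary identity finishes the step. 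Iterating multilinearity in the $i$-th slot $k$ times gives
\[
[a_1,\ldots,a_i^k,\ldots,a_j] \;\equiv\; [a_1,\ldots,a_i,\ldots,a_j]^k \pmod{\CG_{j+1}}.
\]

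Finally I would upgrade this congruence from $\CG_{j+1}$ to $\CG_{j+1}^{TF}$. In Koberda's construction of the torsion-free lower central series, $\CG_{j+1}^{TF}$ is the preimage in $\CG_j^{TF}$ of the torsion subgroup of $\CG_j^{TF}/[\CG,\CG_j^{TF}]$, so in particular $\CG_{j+1}^{TF} \supseteq [\CG,\CG_j^{TF}] \supseteq [\CG,\CG_j] = \CG_{j+1}$. Hence the congruence descends to $L_j$, which is the content of $\equiv_j$. I do not foresee a genuine obstacle here: the main thing to watch is that the ordinary $(j+1)$-st commutator subgroup is swallowed by its torsion-free analogue, which is automatic from the definition, so no group-theoretic surprise is needed beyond routine commutator calculus.
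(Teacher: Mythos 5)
Your proof is correct and takes essentially the same route as the paper: an induction on $j$ driven by the basic commutator identities and the triviality of conjugation on the relevant quotients. The only cosmetic difference is that you first prove multilinearity modulo the ordinary lower central series $\CG_{j+1}$ and then observe $\CG_{j+1}\subseteq\CG_{j+1}^{TF}$, whereas the paper runs the same induction directly modulo $\CG_{j+1}^{TF}$.
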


\begin{proof}
We prove the claim inductively on $j$. The claim is obvious for $j=1$. Assume we've proved the claim for all numbers up to $j$. We now prove it for $j+1$. Repeated application of the basic commutator identities: $[x,zy] = [x,y] \cdot [x,z]^y$ together with the fact that conjugation acts trivially on $L_i$ for all $i$ gives that: 

$$[a_1, \ldots, a_j^k] = [a_1, \ldots a_j] \cdot [a_1, \ldots a_j]^y \ldots \cdot [a_1, \ldots a_j]^{y^{k-1}} \equiv_j [a_1, \ldots, a_j]^k $$ 
For $i < j$ the inductive claim gives us a $w \in \CG^{TF}_{j}$ such that $$[a_1 \ldots, a_i^k, \ldots, a_j] = [[a_1, \ldots, a_i^j, \ldots, a_{j-1}], a_j] = [[a_1, \ldots, a_{j-1}]^kw, a_j] \equiv_j [[a_1, \ldots, a_{j-1}]^k, a_j]$$

Using the identity $[y,x] = [x,y]^{-1}$ and the fact that $[a_1, \ldots, a_j^k] \equiv_j [a_1, \ldots, a_j]^k$ now yields the result. 

\end{proof}

For any $k$, let $\CG[k]$ be the kernel of the natural map $\CG \to H_1(\CG, \BZ/ k\BZ)$. Let $\{\CG[k]_j^{TF} \}_j$ be the torsion free lower central series of $\CG[k]$, and let $$L_j[k] = Z(\CG[k]/ \CG[k]_j^{TF}).$$

\nid The inclusion $\CG[k] \to \CG$ induces a natural map $L_j[k] \to L_j$. As a corollary to Lemma \ref{nilmultlin}, we have the following. 
\begin{corollary} \label{knilp}
For any $j,k \in \BN$, the image of the natural map $L_j[k] \to L_j$ is $k^j L_j$.
\end{corollary}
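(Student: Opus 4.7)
The plan is to establish both inclusions $k^j L_j \subseteq \tup{im}(L_j[k] \to L_j)$ and $\tup{im}(L_j[k] \to L_j) \subseteq k^j L_j$ by direct commutator computations, applying Lemma \ref{nilmultlin} iteratively.

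For $k^j L_j \subseteq \tup{im}$, I would use that $L_j$ is generated by the images of basic commutators $[a_1, \ldots, a_j]$ with $a_i \in S$. For each such generator, observe that $a_i^k \in \CG[k]$, since its image in $\CG_{ab} \otimes \BZ/k\BZ = H_1(\CG, \BZ/k\BZ)$ is trivial. Thus $[a_1^k, \ldots, a_j^k]$ is a $j$-fold commutator of elements of $\CG[k]$, lies in $\CG[k]^{TF}_j$, and represents an element of $L_j[k]$. Applying Lemma \ref{nilmultlin} iteratively --- first to $[a_1^k, a_2, \ldots, a_j] \equiv_j [a_1, \ldots, a_j]^k$, then to $[a_1^k, a_2^k, a_3, \ldots, a_j] \equiv_j [a_1^k, a_2, \ldots, a_j]^k \equiv_j [a_1, \ldots, a_j]^{k^2}$, and so on --- one obtains $[a_1^k, \ldots, a_j^k] \equiv_j [a_1, \ldots, a_j]^{k^j}$. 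In the abelian group $L_j$ this reads $k^j \cdot [a_1, \ldots, a_j]$, so such elements generate $k^j L_j$ and the inclusion follows.

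For the reverse inclusion, $L_j[k]$ is generated by images of $j$-fold commutators $[b_1, \ldots, b_j]$ with $b_i$ in a generating set of $\CG[k]$. Since $b_i \in \CG[k]$, its image in $\CG_{ab}$ lies in $k\CG_{ab}$, so $b_i = u_i c_i$ with $u_i = \prod_{a \in S} a^{k n_{i,a}}$ and $c_i \in [\CG,\CG] \subseteq \CG_2$. Using multilinearity of the $j$-fold commutator modulo the next term in the lower central series, together with the filtration property $[\CG_a, \CG_b] \subseteq \CG_{a+b}$, any term in the expansion of $[u_1 c_1, \ldots, u_j c_j]$ involving a factor $c_i$ lands in $\CG_{j+1} \subseteq \CG^{TF}_{j+1}$. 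Hence modulo $\CG^{TF}_{j+1}$, $[b_1, \ldots, b_j] \equiv [u_1, \ldots, u_j]$, and expanding each $u_i$ further yields a product of commutators of the form $[a_1^{k n_1}, \ldots, a_j^{k n_j}]$ with $a_i \in S$. A second application of Lemma \ref{nilmultlin} shows each such commutator is $\equiv_j [a_1, \ldots, a_j]^{k^j n_1 \cdots n_j}$, which lies in $k^j L_j$.

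The main technical obstacle is the multilinearity step of the second inclusion: one must use the commutator identities $[xy, z] = [x,z]^y [y,z]$ and $[x, yz] = [x,z][x,y]^z$ to expand $[u_1 c_1, \ldots, u_j c_j]$ and carefully argue that every term in the expansion involving any $c_i$ is annihilated in $L_j$ thanks to the filtration. This is standard commutator calculus but requires bookkeeping to ensure one truly lands in $\CG^{TF}_{j+1}$ rather than just $\CG_{j+1}$, which is fine given $\CG_{j+1} \subseteq \CG^{TF}_{j+1}$. Once this reduction is in place, the corollary follows from two applications of Lemma \ref{nilmultlin}.
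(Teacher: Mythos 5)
Your proof is correct and is essentially the natural unpacking of what the paper leaves implicit when it labels this statement a corollary of Lemma \ref{nilmultlin}. Both inclusions go through as you describe: the forward inclusion $k^j L_j \subseteq \tup{im}$ comes from iterating Lemma \ref{nilmultlin} slot by slot on $[a_1^k,\ldots,a_j^k]$ (which lies in $\CG[k]_j \subseteq \CG[k]_j^{TF}$ and so represents a class in $L_j[k]$), and the reverse inclusion from writing each generator $b_i$ of $\CG[k]$ as a product of $k$th powers of generators of $\CG$ times an element of $[\CG,\CG]$, discarding the latter via multilinearity and the filtration $[\CG_a,\CG_b]\subseteq\CG_{a+b}\subseteq\CG_{a+b}^{TF}$, and then applying Lemma \ref{nilmultlin} again. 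Your flagged bookkeeping concern is handled exactly as you suggest, since it suffices to land in $\CG_{j+1}\subseteq\CG_{j+1}^{TF}$.
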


\subsubsection{The upgrade lemma} Fix a vertex $u$ of $\CS^e \varphi$. For any integer $k$, let $G[k]$ be the kernel of the map $G \to H_1(G, \BZ/ k\BZ)$.  Let $F[k] = i^{-1}(G_k)$. Let $\pi_k: \Gamma_k \to \Gamma$ be the cover of $\Gamma$ corresponding to this map. The map $\varphi^k$ lifts to to $\Gamma_k$. Call this lift $\varphi_k$. Let $\CT_u^k$  be the vertex subgraph corresponding to $u$ in the transition graph of $\varphi^k$.
\begin{lemma} \tup{(The upgrade lemma)} \label{upgrade} Let $j \in \BN$, and suppose $u$ is a $(j+1)$-stable vertex for $\varphi$. Then for all but finitely many $k \in \BN$, the graph $\CT_u^k[\pi_k]$ is a $j$-stable extremal subgraph.

\end{lemma}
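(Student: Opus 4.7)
The plan is to leverage Corollary \ref{knilp}, which controls how the torsion-free lower central series layers scale under $k$-covers, together with the translated version of the trace-of-powers lemma (the variant stated immediately after Lemma \ref{top}), to transfer the $(j{+}1)$-stability of $u$ downstairs into $j$-stability upstairs.

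I would begin by verifying that $\CT_u^k[\pi_k]$ is an extremal subgraph of the transition graph of $\varphi_k$. Since $\CT_u^k$ is the vertex subgraph at $ku \in \CS^e \varphi^k$ and $\pi_k$ is a regular abelian cover, the covering $\CT[\pi_k] \to \CT^k$ pulls extremal subgraphs back to extremal subgraphs via Observation \ref{extremegraphgeneral}: cycles in the cover project to cycles in $\CT^k$ whose $H_f$-translation lies in $kH_f$ (the kernel of the deck action), and among these the linear functional defining the face $ku$ still attains its maximum.

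For $j$-stability, I would match up the sum $\sum_\gamma \textbf{s}(\gamma)\,\textbf{t}^{\pi_k}_j(\gamma)$ over based cycles of a fixed length $\ell$ in $\CT_u^k[\pi_k]$ with a downstairs sum over $\CT_u$. Each such $\gamma$ projects to a cycle in $\CT_u$ of length divisible by $k$. Iterating Lemma \ref{consistentsubgraph} on the vertex subgraph gives $j$-consistency, so the downstairs translation of a based cycle decomposes as a deterministic offset $P_\ell(p)$ plus a deviation $\Delta \in L_j \otimes \BR$. Corollary \ref{knilp} tells me that the image of the covering layer $L_j[k]$ in $L_j$ is precisely the sublattice $k^j L_j$, so the upstairs sum becomes, up to a universal shift, the sum of $\textbf{s}\cdot \Delta$ over downstairs cycles whose deviation lies in a translate of $k^j L_j$. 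The $(j{+}1)$-stability hypothesis provides a non-nilpotent matrix on this $L_j$-layer (morally, the vertex matrix $A_{f,u}$ after reducing out the strictly lower layers of $N_{j+1}$), and the translated trace-of-powers lemma then guarantees that for all but finitely many $k$ this sum is nonzero for infinitely many $\ell$, which is exactly the $j$-stability condition for $\CT_u^k[\pi_k]$.

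The main obstacle will be the algebraic bookkeeping: extracting the implicit non-nilpotent operator on $L_j$ hidden inside the statement of $(j{+}1)$-stability, matching the translation functions $\textbf{t}_{j+1}$ and $\textbf{t}^{\pi_k}_j$ through the diagram induced by the inclusions $F[k] \hookrightarrow F_n$ and $G[k] \hookrightarrow G$, and pinning down the correct translate $k^j L_j + w$ of the lattice governing the cover. Once these identifications are cleanly in place, Corollary \ref{knilp} supplies the lattice and the trace-of-powers lemma finishes the proof with no further input beyond the hypothesis that $u$ was $(j{+}1)$-stable.
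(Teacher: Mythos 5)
Your proposal mirrors the paper's argument: take a (weakly) consistent vertex subgraph of $\CT_u$ obtained by iterating Lemma \ref{consistentsubgraph}, encode the base-point-free deviations in a matrix $B$ that is non-nilpotent by the $(j{+}1)$-stability hypothesis, use Corollary \ref{knilp} to identify the upstairs translation sums with trace-of-powers evaluations of $B$ on a translated lattice, and finish with the translated version of Lemma \ref{top}. The indexing you sketch ($k^j L_j$ rather than the paper's $k^{j+1}L_{j+1}$) is off by one, but that is exactly the bookkeeping you flag as remaining to be pinned down, and the overall strategy is the same.
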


\begin{proof}
Let $\CT' \subseteq \CT_u$ be a $j$-vertex subgraph. As in Lemma \ref{consistentsubgraph}, let $\CM = V(\CT)^{L_{j+1}} \otimes \BR$, and let $\overline{\Delta}$ be the level $j+1$ base point free deviation function. Since the function $\overline{\Delta}$ is additive on cycles, we can extend it to in $\CT_u$, and produce a corresponding matrix $B \in M_{|V(\CT)|}(\BC[\CM])$ such that for any $k$: 

$$\trace(B^k) = \sum_\gamma \textbf{s}(\gamma) \overline{\Delta}(\gamma) $$

\nid where the sum is taken over all cycles of length $k$ in $\CT'$ and $\overline{\Delta}(\gamma)$ is understood as an element of $\BC[M]$. Since $V_{j+1}$ is torsion free, we have a natural inclusion map ${L_{j+1}}^V(\CT)  \subset M$. Let $m = |V(\CT)|$. Denote $\CL = {L_{j+1}}^V(\CT)   \cong (\BZ^{r})^m$ where $r = \textup{rank} (L_{j+1})$. Note that by construction, $\trace(B^k) \in \CL$ for every $k$.

\nid Since $\CT'$ is stable, the matrix $B$ is not nilpotent. Let $N$ be the number provided by Lemma \ref{top}. Fix $k > N$. For any $s$, let $\textbf{t}_{s}$ be the translation function corresponding to the map $G \to G / G^{TF}_{s}$, and $\textbf{t}[k]_{s}$ be the translation corresponding to the map $G[k] \to G[k] / G^{TF}[k]_{s}$. 

For any $i$, let $T_i = \sum_{\gamma} \textbf{s}(\gamma) \textbf{t}_{j+1}(\gamma)$, where the sum is taken over all cycles of length $i$ in $\CT'$. For any $x$ in the support of $T_i$, let $a^i_x$ be its coefficient. Since $\CT'$ is a $j$-vertex subgraph, for any $x, y$ in the support of $T_i$ we have that $xy^{-1} \in  L_{j+1}$. Let $T_i[x,k] = \sum_y a^i_y$ where the sum is taken over all $y$ such that $xy^{-1} \in k^{j+1} L_{j+1}$. 

Consider the sum $\sum_{\wt{\gamma}} \textbf{s}(\wt{\gamma})\textbf{t}[k]_{j+1}(\wt{\gamma})$, where the sum is taken over the lifts to $\CT'[\pi_k]$ of all cycles of length $i$ in $\CT'$. Pick $y$ in the support of this sum such that the image of $y$ in $G/G^{TF}_{j+1}$ is $x$. Let $\overline{y}$ be the image of $y$ in $G[k]/G^{TF}[k]_{j}$ under the natural inclusion map. 

By Corollary \ref{knilp}, the coefficient of $\overline{y}$ in $\sum_{\wt{\gamma}} \textbf{s}(\wt{\gamma}) \textbf{t}[k]_j(\wt{\gamma})$ is equal to $T_i[x,k]$. Since $\CT'$ is a $j$-vertex, this in turn is equal to $\trace(B^i)(\CL + \overline{w})$, for some $\overline{w} \in \CL$. By Lemma \ref{top}, this is not equal to $0$ for infinitely many values of $i$. This concludes the proof.

\end{proof}

\subsubsection{Weak vertex subgraphs and weakly consistent subgraphs} 
\begin{observation}
	We would like to use Lemma \ref{nilstab} together with repeated applications of Lemma \ref{upgrade} to produce a cover where a given vertex $u$ of $\CS^e \varphi$ is stable. However, Lemma \ref{upgrade} inputs a cover where a given \emph{vertex} is $(j+1)$-stable and outputs a further cover where a given \emph{face} is $j$-stable. It is quite possible that the vertices in this face are not themselves $j$-stable, which prevents the use of an inductive step. To circumvent this issue, we introduce a slight technical generalization called \emph{Weakly vertex subgraphs and weakly consistent subgraphs}.
\end{observation}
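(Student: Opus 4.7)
The observation flags that Lemma \ref{upgrade} outputs an \emph{extremal} subgraph starting from a \emph{vertex} subgraph, which breaks any inductive argument that would re-apply \ref{upgrade} to its own output. My plan is to introduce a weaker category of subgraphs that is closed under both ``pass to an extremal sub-subgraph with respect to the $j$-level base-point-free deviation $\overline{\Delta}$'' and ``pull back to the $k$-cover $\pi_k$,'' so that the entire nilpotent stabilization / upgrade machinery can be iterated inside this category.

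Concretely, I would define a \emph{weakly $j$-consistent} subgraph by replacing the equality $\textbf{t}_j(\gamma) = f^{k}(x_p)\cdots f^{d}(x_p)$ in the definition of $j$-consistency with the requirement that this holds only modulo a fixed element in the image of $L_{j-1}$ at each basepoint $p$, and define a \emph{weak $j$-vertex subgraph} via a tower $\CT_j \subseteq \ldots \subseteq \CT_1$ in which each $\CT_{i+1}$ is a weakly $(i+1)$-consistent \emph{extremal} (rather than vertex) sub-subgraph of $\CT_i$ with respect to the $i$-level $\overline{\Delta}$. Since Observation \ref{extremegraphgeneral} and Lemma \ref{extremegraph} only use additivity of $\overline{\Delta}$ on cycles, the construction of a weakly $(j+1)$-consistent sub-subgraph inside a weakly $j$-consistent one should follow the proof of Lemma \ref{consistentsubgraph} almost verbatim, with the chosen $z_p$ now pinned down only up to the appropriate coset.

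Next I would re-prove Lemma \ref{nilstab} in this wider setting: any descending tower of weak vertex subgraphs inside the finite graph $\CT_u$ must stabilize, and at the stabilized level the rigidity of $\textbf{t}_\infty$ coming from the train-track property (together with $\bigcap K_j = \{e\}$) forces the stabilized subgraph to be a disjoint union of cycles, hence $N$-stable for the relevant $N$. Crucially, I would then re-prove Lemma \ref{upgrade} so that both its input and output are weak vertex subgraphs: given a $(j+1)$-stable weak $(j+1)$-vertex subgraph, the extremal sub-subgraph produced in the $k$-cover is, by construction, a weak $j$-vertex subgraph of the pulled-back tower. The existing proof should go through essentially unchanged because the non-vanishing / Fourier argument coming from the trace-of-powers lemma \ref{top} does not distinguish vertex from extremal subgraphs, and Corollary \ref{knilp} controls the image $L_{j+1}[k] \to L_{j+1}$ identically in either setting.

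With these weak analogues in hand, the proof of \ref{theorem3} is: take any vertex $u$ of $\CS^e \varphi$, regard $\CT_u$ as a trivial weak $1$-vertex subgraph, invoke the weak form of \ref{nilstab} to find an $N$ making it $N$-stable, and then apply the weak form of \ref{upgrade} a total of $N-1$ times to reach a solvable cover $\pi_\ast$ of $\Gamma$ in which some weak vertex subgraph is $1$-stable, i.e.\ $\trace(A_f[\pi_\ast]^k)(L) \neq 0$ for some lattice $L$ and infinitely many $k$. The anchoring lemma \ref{anchoring} then delivers a further abelian cover whose homological action carries an eigenvalue off the unit circle. The main obstacle will be the bookkeeping in defining ``weak'' so that both the extremal-subgraph operation and the $\pi_k$-pullback preserve it; in particular one must track how $\overline{\Delta}$ transforms under the replacement of $\varphi$ by $\varphi^k$ and of $G$ by $G[k]$, and verify that the coset representatives in $L_{j-1}$ assigned at each vertex lift consistently through the tower, which is the technically delicate step.
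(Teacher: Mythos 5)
You have diagnosed the structural problem correctly: Lemma \ref{upgrade} outputs an extremal (face) subgraph rather than a vertex subgraph, so naive induction fails, and the fix is to enlarge the class of subgraphs so the operations one needs to iterate stay inside it. That much matches the paper. However, the specific weakening you propose is genuinely different from the paper's and, I believe, does not capture the phenomenon that actually occurs. You weaken $j$-consistency to a congruence on individual cycle translations, requiring $\textbf{t}_j(\gamma)$ to agree with $f^k(x_p)\cdots f^d(x_p)$ up to a fixed coset coming from $L_{j-1}$. The paper instead weakens at the level of the group ring: it asks that the sign-weighted \emph{aggregate} sum $\sum_\gamma \textbf{s}(\gamma)\textbf{t}_j(\gamma)$ over based cycles of length $k$ at $p$ equal $C_k\cdot f^k(x_p)\cdots f^d(x_p)$ for some scalar $C_k$. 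These are not the same condition, and the difference matters. After a $k$-cover, the lifted cycles sharing a given $\textbf{t}_n$-value need not have translations that agree modulo $L_{j-1}$; what is true is that, once one restricts to the $j$-stable vertex-type piece and passes to a power of $f$ so that the contributions of the strictly larger vertex-type subgraphs vanish, the aggregate sum concentrates on a single group-ring monomial with a nontrivial coefficient. A pointwise congruence condition on $\textbf{t}_j(\gamma)$ would not survive this aggregation argument, so your version of Lemma \ref{upgrade} would not go through as stated.

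Relatedly, your ``weak $j$-vertex subgraph'' is defined via a tower of $\overline{\Delta}$-extremal sub-subgraphs, but the paper's ``weak $j$-level vertex'' is defined differently: it is a $j$-stable subgraph of vertex type (all loops with a fixed value of $\textbf{t}_n$) that is extremal among $j$-stable vertex-type subgraphs with respect to some linear functional $T$ on $H_f\otimes\BR$. The tower construction (the $j$-vertex subgraph of Lemma \ref{consistentsubgraph}) is used to build a consistent object inside a vertex subgraph, not to define the weak vertex itself. So both the weakening of consistency and the definition of the weak vertex diverge from the paper. Your high-level plan and the recognition that the bookkeeping is the delicate step are both right, but the precise definitions you propose would need to be replaced by the group-ring/aggregate versions for the subsequent lemmas (in particular the analogue of Lemma \ref{upgrade}, the paper's Observation \ref{weakupgrade}) to close.
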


\begin{definition}
	A subgraph $\CT' \subset \CT$ is said to be \emph{of vertex type} if there exists some $v \in H_f \otimes \BR$ such that $\CT'$ consists all loops $\gamma$ with $\textbf{t}_n(\gamma) = v$.
\end{definition}

\begin{definition}
	 A subgraph $\CT' \subset \CT$ of vertex type is said to be a \emph{weak $j$-level vertex} if it is $j$-stable and there exists some linear transformation $T: H_f \otimes \BR \to \BR$ such that for any $v' \in  H_f \otimes \BR$ with $T(v') > T(v)$, the subgraph of vertex type corresponding to $v'$ is not $j$-stable. 
	 
\end{definition}

\begin{observation}
	Let $v, v'$ be as in the above definition. By replacing $f$ with a power of itself, we may assume that for all such $v'$, for any vertex $p$ and for any $k$:  $\sum \textbf{s}(\gamma_p)\textbf{t}_j(\gamma_p) = 0 $ where the sum is taken over all based loops of length $k$ in the vertex type subgraph corresponding to $v'$ that are based at $p$ and $\textbf{t}_j$ is the translation function corresponding to $N_j$.
\end{observation}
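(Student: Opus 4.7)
My plan is to combine two ingredients: a finiteness observation about the subgraphs of $\CT$, and the natural bijection between paths of length $k$ in $\CT[\varphi^n]$ and paths of length $nk$ in $\CT[\varphi]$ that is built into the decorating function.

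First I would observe that as $v'$ ranges over the (a priori countably many) points with $T(v') > T(v)$, the underlying subgraph $\CT_{v'} \subseteq \CT$ takes only finitely many values, since the finite graph $\CT$ has only finitely many subgraphs. Moreover, the sum $\sum_{\gamma_p}\textbf{s}(\gamma_p)\textbf{t}_j(\gamma_p)$ depends only on the underlying subgraph together with the intrinsic data $\textbf{s}$ and $\textbf{t}_j$, not on the particular $v'$. By hypothesis each such $\CT_{v'}$ fails to be $j$-stable, so for each of the finitely many distinct underlying subgraphs $S$ that arise, the set $K_S \subset \BN$ of "bad" values of $k$ for which the sum is nonzero at some vertex $p$ is finite. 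Setting $K := \max_S \max K_S$ gives a uniform bound: for every $k > K$, every vertex $p$, and every $v'$ with $T(v') > T(v)$, the sum vanishes.

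Next I would replace $f$ by $f^n$ for some $n > K$. The definition of the decorating function gives a canonical bijection between paths of length $k$ in $\CT[\varphi^n]$ and paths of length $nk$ in $\CT[\varphi]$, since an edge of $\CT[\varphi^n]$ records a traversal of an edge of $\Gamma$ by $\varphi^n$, which is canonically encoded by a length-$n$ path in $\CT[\varphi]$. Unwinding the definitions, this bijection preserves endpoints, signs $\textbf{s}$, and the translation function $\textbf{t}_j$. Under this correspondence, a based loop of length $k$ in the vertex type subgraph for some $v''$ in $\CT[\varphi^n]$ corresponds to a based loop of length $nk$ in the vertex type subgraph for $v''/n$ in $\CT[\varphi]$, and the sums of $\textbf{s}\cdot\textbf{t}_j$ agree term by term.

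For any $k \geq 1$ one has $nk \geq n > K$, so the sum over length-$nk$ loops in $\CT[\varphi]_{v''/n}$ vanishes for every relevant $v''$; hence every sum in the statement of the observation vanishes in the $\varphi^n$-setup. The condition $T(v'') > T(nv)$ on $\varphi^n$-normalized translations translates into $T(v') > T(v)$ on $\varphi$-normalized translations via $v'' = nv'$, so this is precisely the weak $j$-level vertex condition in the new setup. The main obstacle I expect is largely notational: checking that the bijection between paths in $\CT[\varphi^n]$ and $\CT[\varphi]$ is compatible with all the relevant structures, in particular the translation function to $N_j$, which requires canonically identifying the cover $\wt{\Gamma}_f$ with $\wt{\Gamma}_{f^n}$. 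This is a routine unwinding of definitions using the fact that on $H_f$ the endomorphism $(f^n)_* - I$ differs from $f_* - I$ by an invertible factor (since $f_* - I$ is nilpotent after our standing reduction to eigenvalue $1$).
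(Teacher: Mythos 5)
Your proposal is correct and, as far as I can tell, is exactly the argument the paper leaves implicit (the statement is given as an unproved Observation). The two ingredients you isolate are the right ones: only finitely many subgraphs of the finite graph $\CT$ can occur as a vertex-type subgraph $\CT_{v'}$, and since each such subgraph fails to be $j$-stable its bad set of lengths is finite, giving a uniform bound $K$; and the decorating function sets up a canonical, sign- and translation-preserving bijection between length-$k$ paths in $\CT[\varphi^n]$ and length-$nk$ paths in $\CT[\varphi]$, so replacing $\varphi$ by $\varphi^n$ with $n > K$ pushes every length past $K$. Combined with the scaling $\CS^e \varphi^n = n\,\CS^e\varphi$ and the linearity of $T$, this is a complete proof.

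One remark on the "routine unwinding" you flag at the end: the identification you invoke, $\wt{\Gamma}_f \cong \wt{\Gamma}_{f^n}$ via the invertibility of $I + f_* + \cdots + f_*^{n-1}$, concerns the abelian cover $H_f$, whereas the translation function $\textbf{t}_j$ in the statement is taken with respect to the nilpotent cover $N_j$, which is not the abelianization. The cleaner (and in fact simpler) route to the compatibility you need is to hold the cover $N_j \to \Gamma$ fixed throughout (it is a fixed finite-sheeted cover of $\Gamma$ to which both $\varphi$ and $\varphi^n$ lift) and observe that the path functions $\textbf{p}$ for $\CT[\varphi]$ and $\CT[\varphi^n]$ agree under your bijection because $(\varphi^n)^k(e_i) = \varphi^{nk}(e_i)$ — the two decorating schemes enumerate the same sequence of traversals. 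Since $\textbf{t}_j$ is defined from $\textbf{p}$ and the fixed addressing of $N_j$, compatibility with $\textbf{s}$ and $\textbf{t}_j$ follows with no appeal to $H_f$ or to the structure of $f_* - I$. The step is genuinely routine, just pointed at a different target than the one you name.
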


 \begin{definition}
 	A subgraph $\CT' \subseteq \CT$ is called \emph{weakly $j$-consistent} if for every vertex $p$ of $\CT'$ there exists $x_p \in N_j$ and an integer $d$ dividing the lengths of all cycles in $\CT'$ such that for all sufficiently large $k$:
 	
 	$$\sum_\gamma \textbf{s}(\gamma)\textbf{t}_j(\gamma) = C_k f^{k}(x_p)  \cdot \ldots \cdot f^{3d}(x_p) f^{2d}(x_p) f^{d}(x_p). $$
 	
 	  \nid where the above sum is taken in the group ring of $N_j$, the sum is taken over all based cycles of length $k$ in $\CT'$ that are based at $p$, $C_k$ is some number, and $\textbf{t}_j$ is the translation function corresponding to $N_j$.
 \end{definition}

	Suppose $\CT''$ is a weakly $j$-consistent subgraph of the weak $j$-level vertex subgraph $\CT'$ corresponding to $v \in H_f \otimes \BR$. For any vertex $p$, and any based loop $\gamma_p$ satisfying $\textbf{t}_n(\gamma_p) = v$,  define the deviation of $\gamma_p$ or $\Delta(\gamma_p)$ exactly as in Lemma \ref{consistentsubgraph}. The same calculation as the one done in Lemma \ref{consistentsubgraph} shows that $\Delta$ remains constant under cyclic reordering of based cycles based at $p$, and that it is additive on such cycles. Exactly as in Lemma \ref{consistentsubgraph} define the base point free deviation of a cycle $\gamma$ satisfying $\textbf{t}_n(\gamma) = v$. 
	
	We think of $\overline{\Delta}(\gamma)$ as an element of the group ring $\BC[L_{j}^{V(\CT)}]$. Extend the definition of $\overline{\Delta}$ to cycles $\gamma$ in $\CT''$ not satisfying $\textbf{t}_n(\gamma) = v$ by setting $\overline{\Delta}(\gamma) = 0$.

\begin{observation} \label{weakupgrade}
	Since $v$ is a $j$-level weak vertex subgraph, we can find a matrix $B \in M_{|V(\CT)|}(\BC[L_{j}^{V(\CT)}])$ such that for any $k$: 
	
	$$\trace(B^k) = \sum_\gamma \textbf{s}(\gamma) \overline{\Delta}(\gamma) $$
	
	where the sum is taken over all cycles of length $k$ in $\CT''$ satisfying $\textbf{t}_n(\gamma) = p$. The same proof as Lemma \ref{upgrade} now gives us that for all but finitely many $k$'s, the graph $CT''[\pi_k]$ is $(j-1)$-stable.
\end{observation}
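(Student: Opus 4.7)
The plan is to mirror the proof of Lemma \ref{upgrade} almost verbatim, with $\CT''$ and its base point free deviation $\overline{\Delta}$ playing the role of the $j$-vertex subgraph and its deviation function there. The three steps are: (i) construct a matrix $B \in M_{|V(\CT)|}(\BC[L_j^{V(\CT)}])$ whose traces of powers realize the signed sum of $\overline{\Delta}$-values over closed walks of length $k$ in $\CT''$; (ii) verify that $B$ is not nilpotent; and (iii) apply Lemma \ref{top} together with Corollary \ref{knilp} to transfer this non-vanishing into the $(j-1)$-stability of $\CT''[\pi_k]$ for all but finitely many $k$.

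For step (i), the key facts are that $\overline{\Delta}$ is additive on cycles in the relevant restricted sense (established in the discussion preceding the observation) and that $L_j^{V(\CT)}$ is abelian, so one can factor $\overline{\Delta}$ through an edge weighting. Choose a spanning forest of $\CT''$; for each non-tree edge $\eta$ from $q$ to $r$ put $b(\eta) = \overline{\Delta}(\ell_\eta)$, where $\ell_\eta$ is the simple loop obtained by closing $\eta$ with the forest path from $r$ to $q$, and set $b(\eta) = 0$ on tree edges (the extension-by-zero convention absorbs any $\ell_\eta$ with $\textbf{t}_n(\ell_\eta) \neq v$). Defining
\[
B_{p,q} \;=\; \sum_{\eta: p \to q \text{ in } \CT''} \textbf{s}(\eta)\, b(\eta),
\]
with $b(\eta)$ understood as the associated basis element of $\BC[L_j^{V(\CT)}]$, the usual unwinding of the trace produces the desired identity $\trace(B^k) = \sum_\gamma \textbf{s}(\gamma)\,\overline{\Delta}(\gamma)$, the sum ranging over closed walks of length $k$ in $\CT''$ satisfying $\textbf{t}_n(\gamma) = v$.

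For step (ii), the identity $\textbf{t}_{j+1}(\gamma_p) = P_k(p) \cdot \Delta(\gamma_p)$, valid since $L_j$ is central in $N_{j+1}$ and $f$ acts trivially on $L_j$, converts the $j$-stability of $v$, as manifested in $\CT''$ through the weak consistency condition, into the non-vanishing of $\sum_{\gamma_p} \textbf{s}(\gamma_p)\,\Delta(\gamma_p) \neq 0$ in $\BC[L_j]$ for infinitely many $k$; summing over base points forces $\trace(B^k) \neq 0$ for infinitely many $k$, hence $B$ is not nilpotent. Step (iii) is then a direct transcription of the concluding argument of Lemma \ref{upgrade}: set $\CL = L_j^{V(\CT)}$, apply Lemma \ref{top} to $B$ with the lattice $k\CL$, and use Corollary \ref{knilp} to identify $k L_j$ with the image of $L_j[k]$ in $L_j$. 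The nonvanishing of the coefficient sums $\trace(B^i)(k\CL + \overline{w})$ on cosets of $k\CL$ then translates, exactly as in Lemma \ref{upgrade}, into the nonvanishing of $\sum_{\wt{\gamma}} \textbf{s}(\wt{\gamma})\,\textbf{t}[k]_{j-1}(\wt{\gamma})$ over lifts in $\CT''[\pi_k]$ of closed walks in $\CT''$, which is precisely the $(j-1)$-stability claimed.

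The subtle point is step (ii). In Lemma \ref{upgrade} non-nilpotence of the analogous matrix followed immediately from $(j+1)$-stability of the input vertex $u$ together with the rigid structure of a $j$-vertex subgraph. Here weakly $j$-consistency only pins down the summed translations up to a scalar $C_k$ which may vanish for arbitrarily large $k$, and the weak $j$-level vertex $v$ is only required to be extremal with respect to a single linear functional $T$. The argument must therefore leverage the extremality of $T$, together with the observation following the definition of weak consistency that permits us to replace $f$ by a power so that contributions from strictly $T$-larger $v'$ vanish, to rule out systematic cancellation in the deviation sums along an infinite subsequence of lengths $k$. Once this is in hand, the remainder is a routine transcription of the closing steps of Lemma \ref{upgrade}.
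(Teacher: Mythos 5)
Your proposal carries out exactly the transcription of Lemma \ref{upgrade} that the paper intends --- the paper's entire justification for this observation is the sentence ``the same proof as Lemma \ref{upgrade}'' --- and your steps (i)--(iii), including the spanning-forest construction of $B$, the non-nilpotence check, and the appeal to Lemma \ref{top} together with Corollary \ref{knilp}, are a faithful unpacking of that sentence. The subtlety you flag in step (ii) about the scalars $C_k$ possibly vanishing is real and is precisely what the unnumbered observation following the definition of weak $j$-consistency (replacing $f$ by a power so that the $T$-larger strata contribute nothing) is there to absorb, so you have located the correct resolution; the only stray slip is writing $kL_j$ where Corollary \ref{knilp} gives $k^{j}L_j$, which does not affect the argument.
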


\subsubsection{The vertex stabilization lemma} 

\begin{lemma} \tup{(The vertex stabilization lemma)} \label{stabilization}
For any vertex $u$ of $\CS^e \varphi$, there exists a solvable cover $\Gamma' \to \Gamma$ to which $\varphi$ lifts such that $ku$ is a stable vertex in $\Gamma'$ for some $k$.
\end{lemma}

\begin{proof}
	By Lemma \ref{nilstab}, the vertex $u$ is $j$-stable for some $j$. By Lemma \ref{upgrade}, we can find some $k_1$ cover $\pi_{k_1}$ such that the extremal subgraph $\CT_u[\pi_{k_1}]$ is $(j-1)$-stable. Since this extremal subgraph is $(j-1)$ stable, it has a weak $(j-1)$-level vertex $u_1$. Let $\CT_{u_1}$ be the corresponding graph. By applying Observation \ref{weakupgrade} we can find a $k_2$ and a $k_2$-cover $\pi_{k_2}$ of $\pi_{k_1}$ such that $\CT_{u_1}[\pi_{k_2}]$ is $(j-2)$-stable. This means that $\CT_u[\pi_{k_2}]$ is $(j-2)$ stable. 
	
	Proceeding inductively in this manner, we can find a cover $\pi$ in which $\CT_u$ is $1$-stable, and hence stable. Since this cover is obtained by iterating abelian covers, it is solvable. 
\end{proof}

\section{Proof of Theorems} \label{proofs}
\subsection{lemmas}
In this section we collect several technical lemmas that we require for our proof. 

\subsubsection{The cyclic deformation lemma}

\begin{lemma} \tup{(Cyclic deformation lemma)} \label{cyclic}
Let $\CT' \subseteq \CT$ be a weak $j+1$-stable vertex subgraph, that is weakly $j$-consistent. Let $\psi: G \to \BZ / q \BZ$ be a homomorphism into a cyclic group of prime order $q$ such that $i(F_n) \not \leq \ker \psi$ and such that $\CT'$ is $j$-stable in the cover corresponding to $\ker \psi$. Then for all sufficiently large primes $p$ there are homomorphisms $\psi_p: G \to \BZ / p \BZ$ such that $i(F_n) \not \leq \ker \psi_p$ and $\CT'$ is $j$ stable in the cover corresponding to $\ker \psi_p$. 

\end{lemma}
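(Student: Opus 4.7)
My plan is to lift the given $\psi$ to an integer-valued homomorphism $\tilde\psi \colon G \to \BZ$, reinterpret $j$-stability in a cyclic cover as non-vanishing of a one-parameter family of twisted traces $\Sigma_k(z)$ parameterised by $z\in S^1$, and then exploit equidistribution of primitive $p$-th roots of unity to transfer the hypothesis (non-vanishing at a specific $q$-th root of unity) to some primitive $p$-th root of unity for every sufficiently large prime $p$.

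First I would construct $\tilde\psi$: since $\psi$ factors through the finitely generated abelian group $G_{ab}$, it lifts coordinate-wise to $\tilde\psi \colon G \to \BZ$ with $\tilde\psi \equiv \psi \pmod q$. The hypothesis $i(F_n) \not\leq \ker\psi$ forces $\tilde\psi(i(F_n)) = m\BZ$ with $m>0$, so $\psi_p := \tilde\psi \bmod p$ satisfies $i(F_n) \not\leq \ker\psi_p$ for every prime $p \nmid m$. Next I would reformulate $j$-stability in $\pi_\psi$ as a character sum: Fourier inversion on $\BZ/q\BZ$, in the spirit of Lemma \ref{special lattice} (split lifts of cycles in $\CT'[\pi_\psi]$ according to their $\psi$-image), identifies $j$-stability of $\CT'$ in $\pi_\psi$ with the existence of some $q$-th root of unity $z_0$ and basepoint $p_0$ such that
\[
\Sigma_k(z_0) \;:=\; \sum_{\gamma}\,\textbf{s}(\gamma)\,z_0^{\tilde\psi(\gamma)}\,\textbf{t}_j(\gamma) \;\neq\; 0 \in \BC[N_j]
\]
for infinitely many $k$, where $\gamma$ ranges over based cycles of length $k$ in $\CT'$ at $p_0$. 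These $\Sigma_k(z)$ are traces of powers of a matrix $M(z) \in M_{|V(\CT)|}(\BC[N_j][z^{\pm 1}])$ obtained by twisting the subgraph matrix of the equivariant Magnus matrix by the formal factor $z^{\tilde\psi(\cdot)}$; the weakly $j$-consistent product structure $C_k f^{k}(x_p)\cdots f^d(x_p)$ lets me factor out the predictable $N_j$-part so that the question of non-vanishing descends to $\BC[L_j][z^{\pm 1}]$, which is the regime of Lemma \ref{top}.

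The conclusion is then an equidistribution step. The ``nilpotence locus'' $\{z \in S^1 : M(z)\text{ is nilpotent}\}$ is the common zero set of finitely many Laurent polynomials in $z$ (the traces of $M(z)^k$ for $k \leq |V(\CT)|$, further specialised at characters of $L_j$), so by the hypothesis, which supplies a $z_0$ avoiding it, this locus is a proper closed subset of $S^1$. For every sufficiently large prime $p$, the primitive $p$-th roots of unity are dense enough in $S^1$ to hit its complement; choosing such a $z_p$ and applying Lemma \ref{top} to $M(z_p)$ gives $\Sigma_k(z_p) \neq 0$ for infinitely many $k$, which unwinds through the character decomposition back to $j$-stability of $\CT'$ in $\pi_{\psi_p}$. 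I expect the main obstacle to be the clean reformulation in step two: the nilpotent quotient relevant to the $\psi$-cover is built from $F_n \cap \ker\psi$ rather than $F_n$, and one must use Corollary \ref{knilp} together with the weakly $j$-consistent structure to identify the two up to a well-behaved rescaling that leaves non-vanishing intact. Once this identification is made explicit, the rest of the argument is an elementary equidistribution exercise on the circle.
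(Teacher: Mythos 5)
Your reduction in step two — identifying $j$-stability of $\CT'$ in the cover $\pi_\psi$ with non-vanishing of $\Sigma_k(z_0) = \sum_\gamma \textbf{s}(\gamma) z_0^{\tilde\psi(\gamma)}\textbf{t}_j(\gamma)$ for some $q$-th root of unity $z_0$ — does not work, and the gap is structural, not just technical. Since $\tilde\psi\colon G \to \BZ$ factors through $G_{ab}$, the quantity $\tilde\psi(\gamma)$ depends only on the class of $\gamma$ in $H_1(M_f;\BZ)$, which for a based cycle of length $k$ in a weak vertex subgraph with vertex $u$ is $(ku,k)$ — the \emph{same} for every such $\gamma$. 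So $z^{\tilde\psi(\gamma)}$ is a constant factor pulled out of the sum, and $\Sigma_k(z_0)\neq 0$ is simply equivalent to $\sum_\gamma \textbf{s}(\gamma)\textbf{t}_j(\gamma)\neq 0$, i.e.\ to $j$-stability of $\CT'$ in the base graph $\Gamma$, with all information about the cover $\pi_\psi$ discarded. In particular your one-parameter ``nilpotence locus'' in $S^1$ does not encode the hypothesis at all.

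The data that the cover actually contributes is the sublattice $L_{j+1}[\psi] \leq L_{j+1}$, the image of the $(j+1)$-st torsion-free lower central term of $\ker\psi$ inside $L_{j+1}$. Because $\tilde\psi$ is trivial on $[G,G]$, no character of the form $z^{\tilde\psi}$ can see this lattice; the relevant characters are elements of the torus $L_{j+1}^*$ that kill $L_{j+1}[\psi]$, and this is a higher-dimensional family. Moreover $L_{j+1}[\psi_p]$ is not a uniform rescaling $p^{j+1}L_{j+1}$ (which is what Corollary \ref{knilp} would give for the full $\bmod\,p$ homology cover): by Lemma \ref{nilmultlin} it is the \emph{graded} lattice $\sum_i p^i U_i$, where $U_i$ is generated by basic commutators in which the distinguished generator $a_1$ appears exactly $i$ times. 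Your plan to ``identify the two up to a well-behaved rescaling'' is thus pointing to the wrong corollary and the wrong geometry: the argument needs the direct sum decomposition $L_{j+1} = U_0 \oplus V$, and the observation that the character sets $N_p = \{\zeta: U_0 \leq \ker\zeta,\ \zeta(V)\subseteq \mu_p\}$ become $\epsilon$-dense in the subtorus $N_\infty = \{\zeta: U_0\leq\ker\zeta\}$ as $p\to\infty$. The continuity and Lemma \ref{top} steps in your third paragraph are fine as far as they go, but they must be run on an open set in this subtorus, not on $S^1$, for the argument to be anchored to the hypothesis about $\psi$.
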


\begin{proof}  

Pick a basis $a_1, \ldots, a_l$ for $H_f$, and extend it to a minimal generating set $B$ for $H$ $H_1(G;\BZ)$ such that $\psi: H_1(G, \BZ) \to \BZ / q \BZ$ sends $a_1$ to $1$ and all other generators to $0$. For any prime $p$, let $\psi_p$ be the homomorphism $\psi_p: H_1(G, \BZ) \to \BZ / p \BZ$ sending $a_1$ to $1$ and all other generators to $0$. So $\psi = \psi_q$.

Let $L_{j+1} = G_{j+!}^{TF} / G^{TF}_{j+2}$. Denote by $L_{j+1}[\psi_p]$ the image of the $(j+1)^{th}$ term of the lower central series of $\ker \psi_p$ in $L_{j+1}$. Denote by $L_{j+1}^*$  the dual group of $L_{j+1}$ (that is - the group of all characters on $L_{j+1}$ with image in the unit circle.) Let $\chi  \in L_{j+1}^*$ such that $L_{j+1}[\psi_q] \leq \ker \chi$. 

We begin by showing that for all $\epsilon > 0$, there exist infinitely many primes $p$, and elements $\chi_p \in L_{j+1}^*$ such that $L_{j+1}[\psi_p] \leq \ker \chi_p$, and the distance from $\chi$ to $\chi_p$ is less than $\epsilon$ (here we're using the distance on $L_{j+1}^*$ induced by an embedding into $\BC^{\rank (L_{j+1})}$).

The group $L_{j+1}$ is generated by the images of elements of the form $[b_1, \ldots, b_{j+1}]$ where $b_i \in B$. By Lemma \ref{nilmultlin}, the lattice $L_{j+1}[\psi_p]$ is generated by elements of the form $p^s[b_1, \ldots, b_j]$ where $s$ is the number of times that $a_1$ appears in $b_1, \ldots, b_j$. 

Let $R = \tup{span}_{\BZ}([b_1, \ldots, b_{j+1}])$ be the lattice of formal linear combinations of generators as above. For any $i \geq 0$ let $Ri$ be the sublattice generated by all elements where $a_1$ appears $i$ times. Let $U_i$ be the image of $R_i$ in $L_{j+1}$. Then $L_{j+1}[\psi_p] = \sum p^i U_i$. 

By definition, we have that $U_0 \leq \ker \xi$, and must have that $U_0 \leq \ker \xi_p$ for every $p$. Given a lattice $L \leq \BZ^r$, we can find a direct sum decomposition $\BZ^r \cong \bigoplus M_i$, with $L \cong \bigoplus(L \cap M_i)$, and $L \cap M_i$ a finite index subgroup of $M_i$. Since every element of $L_{j+1}/L_{j+1}[\psi_p]$ has order that is a power of $p$, and $U_0 \leq L_{j+1}[\psi_p]$ for all $p$ we must have that $U_0$ is a direct summand of $L_{j+1}$. Write $L_{j+1} = U_0 \oplus V$. 

For every $p$, let $N_p \subset L_{j+1}^*$ be the set of all characters $\zeta$ such that $U_0 \leq  \ker \zeta$ and $\zeta(V)$ is contained in the set of $p^{th}$ roots of unity. Let $N_\infty$ be the set of all characters $\zeta$ such that $U_0 \leq \ker \zeta$. We have that $\xi \in N_\infty$, and any $\zeta \in N_p$ satisfies $L_{j+1}[\psi_p] \leq \ker \zeta$. We now conclude by noting that for every $\epsilon > 0$, the set $N_p$ is $\epsilon$-dense in $N_\infty$ for all sufficiently large $p$.

We now proceed similarly to the proof of Lemma \ref{upgrade}. Let $m = |V(\CT)|$, $\CL = \BC[L_{j+1}^V(\CT)]$. As in Lemma \ref{upgrade}, there is a $B \in M_m(\CL)$ such that for any $k$: $$\trace B^k = \sum_\gamma \textbf{s}(\gamma)\overline{\Delta}(\gamma)$$ where the sum is taken over all loops $\gamma$ of length $k$ in $\CT'$. Since $\CT'$ is $j+1$-stable, the matrix $B$ is not nilpotent. 

As in the proof of Lemma \ref{top}, let $p(x)$ be the characteristic polynomial of $B$, and let $\rho_1, \ldots, \rho_m$ be a collection of $m$ roots of $p(x)$. Since $\CT'$ is $j$-stable in the cover corresponding to $\ker \psi$, then as in Lemma \ref{upgrade}  there exists $\chi \in L_{j+1}^*$ with $L_{j+1}[\psi] \leq \ker \chi$,  and a root $\rho_i$ such that $\rho_i(\chi) \neq 0$. 

For every $k$, $\trace B^k[\chi] = \sum_i \rho_i(\chi)^k$. Thus, we can find a $k$ where $\trace B^k[\chi] \neq 0$. The polynomial $\trace B^k$ is continuous, and thus there exists an open set $U$ containing $\chi$ where $\trace B^k$ is non-zero at every point. For every $\chi' \in U$  there exists an $i$ such that $\rho_i(\chi') \neq 0$.

Let $N_{\psi_p} = \{\xi \in L_{j+1}^* | L_{j+1}[\psi_p] \leq \ker \xi \}$. By the above claim, there are infinitely many values of $p$ such that $N_{\psi_p} \cap U \neq \emptyset$. For each such $p$, there are infinitely many values of $l$ such that $$\sum_i \sum_{\chi' \in N_{\psi_p} } \rho_i(\chi')^l  \neq 0$$

Thus, as in the proof of Lemma \ref{upgrade}, the graph $\CT'$ is $j$ stable in the cover corresponding to $\ker \psi_p$. 
\end{proof}

\subsubsection{The cyclic cover multiplicity lemma}
\begin{lemma} \tup{(Cyclic cover multiplicity lemma)}\label{multiplicity} Let $\pi: \Gamma_p \to \Gamma$ be a cyclic cover of degree $p$, for a prime number $p$. Let $u$ be a vertex of $\varphi$ such that the image of $u$ in $\BZ/p\BZ$ is not $0$. Let $\CT_u$ be the vertex subgraph of $u$ and $A_{u}[\pi]$ be the matrix corresponding to $\CT_u[\pi]$. For any $k$, let $t^k[\pi,u]$ be the trace of the matrix $A_u[\pi]^k$. Then as an element of the additive group of $\BZ[H_1(\Gamma_p, \BZ)]$, $t^k(\pi,u)$ is divisible by $p$.
\end{lemma}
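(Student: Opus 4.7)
The plan is to organize the closed cycles of length $k$ in $\CT_u[\pi]$ into orbits of the deck group $\BZ/p\BZ$ and to argue that each orbit contributes an element of $p\cdot\BZ[H_1(\Gamma_p,\BZ)]$ to the trace.

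The first step is to dispose of the case $p \nmid k$. Every closed cycle $\gamma$ of length $k$ in $\CT_u$ satisfies $\textbf{t}(\gamma) = ku \in H_f$ (by definition of the vertex subgraph, after replacing $f$ by a power to make $u$ integral as in Section~\ref{transition}). Its image in $\BZ/p\BZ$ under the homomorphism $\alpha$ defining the cover $\pi$ equals $k\alpha(u)$. The hypothesis $\alpha(u) \neq 0$ together with primality of $p$ forces $k\alpha(u) \neq 0$ whenever $p \nmid k$. Since a cycle in $\CT_u$ lifts to a closed cycle in $\CT_u[\pi]$ precisely when its translation lies in $\ker\alpha$, we conclude that $\CT_u[\pi]$ contains no closed cycles of length $k$ in this case, so $t^k(\pi,u) = 0$ and divisibility is trivial.

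The substantive case is $p \mid k$. Here every closed cycle $\gamma$ of length $k$ in $\CT_u$ admits exactly $p$ lifts to closed cycles in $\CT_u[\pi]$, obtained by starting the lift at each of the $p$ preimages of the base vertex, and these lifts form a single free $\BZ/p\BZ$-orbit. I would partition the closed based cycles of length $k$ in $\CT_u[\pi]$ into such orbits and compute $t^k(\pi,u) = \trace A_u[\pi]^k$ orbit-by-orbit. The sign function $\textbf{s}$ is deck-invariant (it only depends on the projection to $\Gamma$), so with $\sigma$ a generator of the deck group and $\tilde\gamma$ a chosen lift, each orbit contributes $\textbf{s}(\gamma)\cdot\bigl(\sum_{i=0}^{p-1}\sigma^{i}\cdot \textbf{t}(\tilde\gamma)\bigr)$ to the trace.

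To finish, I would verify that this orbit contribution is divisible by $p$ in $\BZ[H_1(\Gamma_p,\BZ)]$ by showing that the $p$ group-ring elements $\sigma^{i}\cdot\textbf{t}(\tilde\gamma)$ actually coincide, so that the sum collapses to $p\cdot\textbf{t}(\tilde\gamma)$. Concretely, by choosing compatible lifts of the spanning tree used to construct $\wt{\Gamma_p}_{f'}$ and a deck-equivariant choice of address function, the translation recorded for each of the $p$ lifts of $\gamma$ only depends on the underlying cycle of $\CT_u$ and not on the sheet; the ``sheet offset'' is absorbed by the deck-invariance made possible by all cycles in $\CT_u$ having the single normalized translation $u$. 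Summing over orbits then yields $t^k(\pi,u) \in p\cdot\BZ[H_1(\Gamma_p,\BZ)]$.

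The main obstacle is the last step: a priori the deck action on $H_1(\Gamma_p,\BZ)$ is highly nontrivial, so it is not automatic that the $p$ deck translates of $\textbf{t}(\tilde\gamma)$ coincide. Making this coincidence rigorous requires careful bookkeeping of the address and translation functions on $\wt{\Gamma_p}_{f'}$ relative to the deck group of $\pi$, and relies essentially on the defining property of $\CT_u$ that every cycle has normalized translation equal to the single vertex $u$.
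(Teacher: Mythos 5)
Your decomposition by deck-group orbits cannot work, and the step you flag as ``the main obstacle'' is not merely an obstacle to be overcome by bookkeeping: the claim that the $p$ deck translates $\sigma^i_*\textbf{t}(\tilde\gamma)$ coincide is simply false in general. For a closed based cycle $\tilde\gamma$ in $\CT_u[\pi]$, the translation $\textbf{t}(\tilde\gamma)$ is the image in the relevant abelian deck group of the homotopy class of the closed loop $\textbf{p}(\tilde\gamma)$ in $\Gamma_p$; it is therefore a well-defined element of that group, independent of any choice of spanning tree, lift $V_0$, or address function. Since $\textbf{p}(\sigma\tilde\gamma) = \sigma\cdot\textbf{p}(\tilde\gamma)$, we have $\textbf{t}(\sigma\tilde\gamma) = \sigma_*\textbf{t}(\tilde\gamma)$, and there is no canonical choice that can erase the $\sigma_*$. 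Worse, the paper's later argument depends essentially on your claim being false: in the proof of Proposition~\ref{pseudoanosov} it is crucial (via Lemma~\ref{invtr}) that $\tup{support}(\trace A_v[\pi_p]^k)$ contain a genuine $\calD$-orbit of size $p$, and the $L^2$ bound $\geq\sqrt{p\cdot p^2}$ comes from combining that with the present multiplicity lemma. A size-$p$ orbit in the support is exactly a $\textbf{t}(\tilde\gamma)$ whose $p$ deck translates are pairwise distinct. With your grouping, such an orbit contributes $\textbf{s}(\tilde\gamma)\sum_{i=0}^{p-1}\sigma^i_*\textbf{t}(\tilde\gamma)$ — a sum of $p$ distinct monomials each of coefficient $\pm1$ — and no divisibility by $p$ is visible.

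The decomposition the paper uses is different: group based cycles of length $k$ in $\CT_u[\pi]$ by cyclic rotation, i.e.\ by underlying unbased cycle $\delta$. Since $\textbf{t}$ and $\textbf{s}$ depend only on $\delta$, the trace becomes $\sum_\delta n_\delta\,\textbf{s}(\delta)\textbf{t}(\delta)$, where $n_\delta$ is the number of based representatives, namely the length of the minimal period $\gamma'$ of $\delta$. The primitive cycle $\gamma'$ is itself a closed cycle in $\CT_u[\pi]$, so its projection $\alpha$ to $\CT_u$ is a closed cycle of the same length satisfying $\textbf{t}(\alpha)=\tup{length}(\alpha)\,u$ in $H_f$ (this is where the defining property of the vertex subgraph enters). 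Because $\alpha$ admits a closed lift to $\CT_u[\pi]$, the image of $\textbf{t}(\alpha)$ in $\BZ/p\BZ$ vanishes; as $u$ maps to a nonzero, hence generating, element of $\BZ/p\BZ$, primality of $p$ forces $p\mid\tup{length}(\alpha)=n_\delta$. This handles every $k$ at once — in particular when $p\nmid k$ it shows no cycle of length $k$ exists, recovering your first case — and gives the divisibility coefficientwise without any claim about the deck action on translations. The correct multiplicity to extract is the cyclic-rotation multiplicity $n_\delta$, not the deck-orbit size $p$.
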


\begin{proof}
	For any integer $k$: $$\trace(A_u[\pi]^k) = \sum_\gamma \textbf{t}(\gamma) $$ where the above sum is taken over all \emph{based} cycles of length $k$ $\gamma$ in $\CT_u$. Since $\textbf{t}(\gamma)$ does not depend on the choice of base point of a cycle, we can rewrite the above as:
	$$\trace(A_u[\pi]^k)  = \sum_\delta n_\delta \textbf{s}(\delta) \textbf{t}(\delta) $$
	where the sum is taken over all unbased cycles of length $k$ in $\CT_u$ and $n_\delta$ is an integer. 
	
	Let $\delta$ be an unbased cycle. The group $\BZ$ acts transitively on the set of based cycles corresponding to $\delta$ by cyclic rotations. Let $s_\delta$ be the size of the image of $\BZ$ under this action. We call $s_\delta$ the \emph{cyclic stabilizer of } $\delta$. Then $n_\delta = \frac{k}{s_\delta}$. 
	
An alternative characterization of $n_\delta$ is the following. Let $\gamma$ be a based representative of $\delta$. Let $\gamma'$ be the minimal based subcycle of $\gamma$ (sharing the same base point) such that $\gamma = (\gamma')^l$. Then: $$n_\delta = \frac{k}{l} = \frac{k}{s_\delta} \tup{length}(\gamma').$$
	
	The based cycle $\alpha'$ projects to a based cycle $\alpha$ in $\CT_u$. By the definition of $\CT_u$, $\textbf{t}(\alpha) = \tup{length}(\alpha) u$. Since $u$ projects to a generator of $\BZ/p\BZ$, and $\gamma'$ is a cycle in the cover $\pi$, we must have that $\tup{length}(\alpha) = \tup{length}(\gamma')$ is divisible by $p$. By the above, this means that $n_\delta$ is divisible by $p$, as required.

\end{proof}

\subsubsection{Invariance of trace}
Let $\pi: \wt{\Gamma} \to \Gamma$ be a finite regular cover, to which $\varphi$ lifts to a map $\wt{\varphi}: \wt{\Gamma} \to \wt{\Gamma}$. Let $\calD$ be the deck group of the cover $\pi$. Since $\wt{\varphi}$ is a lift of $\varphi$, we have that $\wt{\varphi} \calD = \calD \wt{\varphi}$. Thus, some power of $\wt{\varphi}$ commutes with every element of $\calD$. Note that the group $\calD$ also acts on $H_{\wt{f}}$, the homology of the corresponding mapping torus.  

Given an edge $\eta$ of the train track graph of $f$, and a lift $\wt{\eta}$ of $\eta$ to $\wt{\Sigma}$, for any $\sigma \in \calD$ we have that $\wt{\varphi} \sigma(\wt{\eta}) = \sigma \wt{\varphi}(\wt{\eta})$. Thus, by the definition of $A_f[\pi]$ we have the following. 

\begin{lemma} \label{invtr} \tup{(Invariance of trace.)}
Let $\pi:  \wt{\Gamma} \to \Gamma$ be a finite regular cover with deck group $\calD$ to which $\varphi$ lifts. Then there is some integer $k$ such that $\trace(A_f[\pi]^k)$ is $\calD$ invariant. Furthermore, $h \in \tup{support}(\trace(A_f[\pi]^k))$ then the multiplicity of $h$ in $\trace(A_f[\pi]^k)$ is divisible by $n_h = \{\delta \in \calD | \delta_*h = h \}$.
\end{lemma}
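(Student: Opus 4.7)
The plan is to exploit the hypothesis that some power of $\wt{\varphi}$ commutes with every element of the deck group, and use this to set up a well-behaved $\calD$-action on based cycles in the transition graph.

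First, since $\wt{\varphi}$ is a lift of $\varphi$, the identity $\wt{\varphi}\calD = \calD \wt{\varphi}$ shows that conjugation by $\wt{\varphi}$ gives an automorphism of the finite group $\calD$, so for some $k$ the map $\wt{\varphi}^k$ commutes with every $\sigma \in \calD$; fix such a $k$. By the observation at the end of Section 3.1 applied to the cover, one can write
$$\trace(A_f[\pi]^k) = \sum_\gamma \textbf{s}(\gamma)\,\textbf{t}(\gamma),$$
where $\gamma$ ranges over based cycles of length $k$ in $\CT[\pi]$ and $\textbf{t}$ is the translation function for the cover $\wt{\Gamma}_{\wt{f}} \to \wt{\Gamma}$. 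The group $\calD$ acts on $\wt{\Gamma}$ and hence on the vertex set of $\CT[\pi]$ (the edges of $\wt{\Gamma}$). Because $\wt{\varphi}^k$ commutes with each $\sigma \in \calD$, the set of length-$k$ transitions used by $\wt{\varphi}^k$ is $\sigma$-equivariant; that is, $\sigma$ descends to a well-defined permutation of length-$k$ based cycles in $\CT[\pi]$.

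Next I would verify the compatibility $\textbf{s}(\sigma\gamma) = \textbf{s}(\gamma)$ and $\textbf{t}(\sigma\gamma) = \sigma_*\textbf{t}(\gamma)$. The first holds because deck transformations preserve the orientations of lifted edges, so the sign function (which records whether a traversal is in the positive direction) is unchanged. The second holds because $\calD$ acts naturally on $H_{\wt{f}}$ and the translation function is built from the addresses of lifts in the further cover $\wt{\Gamma}_{\wt{f}}$, which is $\calD$-equivariant. Combining, the substitution $\gamma \mapsto \sigma \gamma$ is a bijection on the summation set that replaces each $\textbf{t}(\gamma)$ by $\sigma_*\textbf{t}(\gamma)$, giving $\calD$-invariance of $\trace(A_f[\pi]^k)$ as an element of $\BC[H_{\wt{f}}]$.

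For the divisibility claim, fix $h$ in the support and let $S_h = \{\gamma : \textbf{t}(\gamma) = h\}$, so the coefficient of $h$ is $\sum_{\gamma \in S_h}\textbf{s}(\gamma)$. The stabilizer $\text{Stab}_\calD(h)$, of order $n_h$, acts on $S_h$ (since $\sigma\gamma \in S_h$ iff $\sigma_* h = h$). This action is \emph{free}: the basepoint of $\gamma$ is an edge of $\wt{\Gamma}$, and $\calD$ acts freely on edges of $\wt{\Gamma}$ because $\pi$ is a regular cover. Thus $S_h$ partitions into $\text{Stab}_\calD(h)$-orbits of size $n_h$, and within each orbit the sign is constant by the compatibility above. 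The coefficient of $h$ is therefore divisible by $n_h$. The main technical obstacle is the first step—ensuring that $\sigma$ genuinely acts on $\CT[\pi]$ through length-$k$ paths—which is exactly why one must pass to the power $\wt{\varphi}^k$ that centralizes $\calD$; after that, everything reduces to orbit counting.
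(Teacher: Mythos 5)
Your proof is correct and follows the same approach the paper sketches (the paper does not give a formal proof, only the surrounding discussion noting that some power of $\wt{\varphi}$ commutes with $\calD$ and that $\wt{\varphi}\sigma(\wt{\eta}) = \sigma\wt{\varphi}(\wt{\eta})$). You have filled in the details—the reduction to orbit-counting via a free $\text{Stab}_\calD(h)$-action, the compatibility of $\textbf{s}$ and $\textbf{t}$ with the $\calD$-action, and the freeness coming from $\calD$ acting freely on edges of $\wt{\Gamma}$—cleanly and correctly.

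One small remark: your stated reason that $\textbf{s}(\sigma\gamma)=\textbf{s}(\gamma)$ (that deck transformations preserve orientations of lifted edges) glosses over the fact that the orientations on edges of $\wt{\Gamma}$ are chosen arbitrarily and need not be $\calD$-equivariant. The cleaner justification is that for a \emph{based cycle} $\gamma$, $\textbf{s}(\gamma)$ is independent of the choice of edge orientations: flipping the orientation of any single edge $e$ of $\wt{\Gamma}$ flips the sign of each transition edge into $e$ and out of $e$, and a cycle visits $e$ the same number of times in both roles, so the flips cancel. With this observation the equivariance is automatic. The rest of the argument is sound.
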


\subsubsection{The polytope/lattice lemmas}
\begin{lemma} \tup{(The polytope/lattice lemma)} \label{polytopelattice}
Let $P \subset \BR^d$ be a $d$-dimensional convex polytope. There exists a lattice $L' \subset \BR^n$ and a translate $L$ of $L'$ such that $P \cap L$ consists of at least $d+1$ points, all of which are vertices of $P$, and the convex hull of the points in the intersection is a $d$-dimensional polytope.  Furthermore, for any vertex $v$ of $P$, $L$ can be chosen such that $v \in P \cap L$.
\end{lemma}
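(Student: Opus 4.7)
Since $P$ is a $d$-dimensional convex polytope, its finite vertex set $V$ must contain $d+1$ affinely independent points (otherwise $P$, being the convex hull of $V$, would lie in a proper affine subspace). Fix the prescribed vertex $v = v_0$ and pick $d$ additional vertices $v_1,\dots,v_d \in V$ so that $v_0, v_1, \dots, v_d$ are affinely independent. Define the full-rank lattice
$$L' \;=\; \sum_{i=1}^{d} \BZ\,(v_i - v_0) \;\subset\; \BR^d, \qquad L \;=\; v_0 + L'.$$
Then $L$ is a translate of the discrete lattice $L'$ containing each of $v_0, v_1, \ldots, v_d$, and $L \cap P$ is finite because $L$ is discrete and $P$ is compact. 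Since $\{v_0,\dots,v_d\} \subseteq L \cap P$ is affinely independent, the convex hull of $L \cap P$ is already $d$-dimensional, and $v = v_0 \in L$ by construction. So three of the four conclusions are immediate; only the vertex condition $L \cap P \subseteq V$ requires work.

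To secure the vertex condition, my plan is to make an extremal choice of the tuple $(v_1, \ldots, v_d)$. Among the finitely many $d$-tuples in $V \setminus \{v_0\}$ for which $v_0, v_1, \ldots, v_d$ is affinely independent, I will choose one that maximizes the simplex volume $|\det(v_1 - v_0, \ldots, v_d - v_0)|$. Equivalently, this choice makes the covolume of $L'$ as large as possible among all lattices of the form $\sum \BZ(v_i - v_0)$, so $L'$ is the \emph{sparsest} lattice generated by such differences. The heuristic guiding this choice is that a sparser lattice picks up fewer extraneous points of $P$, and the only intersections forced by construction are the $d+1$ chosen vertices themselves.

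The main obstacle is proving that this extremal choice genuinely yields $L \cap P \subseteq V$. I plan to argue by contradiction using an exchange step: suppose $p \in L \cap P$ is not a vertex. Then $p$ admits a nontrivial convex decomposition $p = \sum_j \lambda_j w_j$ over vertices of $P$ with at least two $\lambda_j > 0$, and simultaneously $p = v_0 + \sum_i n_i (v_i - v_0)$ with $n_i \in \BZ$. Combining these two representations should identify a vertex $w_j$ and an index $i$ such that replacing $v_i$ by $w_j$ yields a new affinely independent tuple whose simplex has strictly larger volume, contradicting maximality. Making this exchange precise---verifying both affine independence of the new tuple and the strict increase in volume, especially when several $n_i$ are simultaneously nonzero---is the technical crux. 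If a single uniform exchange proves awkward in degenerate configurations, a fallback plan is an iterative refinement: the family of candidate lattices $\{v_0 + \sum \BZ(v_i-v_0)\}$ indexed by valid tuples is finite, so a procedure that discards any lattice containing a non-vertex intersection must terminate, and some surviving lattice will witness the lemma.
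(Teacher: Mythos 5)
The key idea of your proposal---fixing a maximum-volume simplex and deriving a contradiction via an exchange step---cannot work as described. The obstruction is the very maximality you are exploiting. Write $u_i = v_i - v_0$ and express each vertex $w$ of $P$ as $w - v_0 = \sum_i \lambda_i(w)\,u_i$. Replacing $v_i$ by $w$ multiplies the simplex volume by $|\lambda_i(w)|$, so maximality already forces $|\lambda_i(w)| \le 1$ for every vertex $w$ and every index $i$. Therefore no single exchange $v_i \leftrightarrow w_j$ can strictly increase the volume, and the contradiction you hope to draw in the penultimate paragraph is unreachable by that route. Any repair would require a simultaneous multi-exchange argument, which you do not outline and which is genuinely delicate.

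Moreover, the claim being targeted by the exchange step---that a maximum-volume choice of $(v_1,\dots,v_d)$ yields $L \cap P \subseteq V(P)$---is simply false in general. Take $P = \mathrm{conv}\{\,(0,0,0),\, e_1,\, e_2,\, e_3,\, (1,1,-\tfrac{3}{10}),\, (1,1,\tfrac12)\,\} \subset \BR^3$ with $v_0 = (0,0,0)$. All six listed points are vertices, and a short case check shows that the simplex $(v_0, e_1, e_2, e_3)$ attains the maximum volume $1/6$ among all choices through $v_0$. Its lattice is $L = \BZ^3$, yet $(1,1,0) \in \BZ^3 \cap P$ lies in the relative interior of the edge from $(1,1,-\tfrac3{10})$ to $(1,1,\tfrac12)$ and so is not a vertex of $P$. (A different maximum-volume simplex through $v_0$, such as $(v_0, e_1, (1,1,-\tfrac3{10}), e_3)$, does produce a valid lattice here---so the construction depends on how ties are broken, and your argument gives no control over that.) The fallback ``iterative refinement'' is not a proof either: it terminates only because there are finitely many candidate lattices, and it asserts rather than shows that a surviving candidate exists; this is exactly the content of the lemma.

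The paper proceeds along an entirely different line: it argues by induction on the dimension $d$, fixing the prescribed vertex $v$, choosing a pair of opposite faces $F_1, F_2$ with respect to a linear functional $\omega$, projecting $F_2$ into the affine span of $F_1$ by a carefully chosen affine map $T$ (adjusted so that the image of the projection direction aligns with a vertex of the lower-dimensional configuration), applying the inductive hypothesis to the $(d-1)$-dimensional polytope $\mathrm{conv}(F_1 \cup T(F_2))$, and then lifting the resulting lattice back. Nothing in your proposal engages with a dimensional reduction, and the ``all intersection points are vertices'' conclusion is obtained there structurally, not by a volume-maximizing exchange.
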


\begin{proof}

We will prove the claim  by induction on $d$. The claim is obvious for $d = 1$. Assume inductively that we've proved it for all dimensions up to $d - 1$. 

Fix a vertex $v$ of $P$. Without loss of generality, we can take $v$ to be the origin. and $v \in Q$. Let $F_1$ be a $d-1$ dimensional face of $P$ incident at $v$.  Let $F_2$ be the opposite face of $F_1$ (by this we mean that there exists a linear map $\omega: \BR^n \to \BR$ whose minimal value on $P$ is achieved precisely on $F_1$ and whose maximal value is achieved precisely on $F_2$.) 

The vertex $v$ is also a vertex of $F_1$. Let $W = \tup{Span}(F_1)$, and let $X$ be the set of all $\eta \in W^*$ whose minimum on $F_1$ is achieved exactly at $v$. The set $X$ is an open set. Pick a point $u \in F_2$, and let $T'$ be the linear operator $T'(x) = x - \omega(x)u$. The map $T'$ sends $F_2$ to $W$. For any $\eta \in X$, there is a face of $T'(F_2)$ on which $\eta \circ T'$ achieves its minimum. Since $X$ is open, we can choose $\eta \in X$ so this minimum is achieved at exactly one point. Call this point $v'$. 

Define a new projection operator $T(x) = x - \omega(x) v'$. Let $Q$ be the convex hull of $F_1 \cup T(F_2)$. This polytope is $d-1$ dimensional. Note that by construction, $v$ is a vertex of $Q$. Apply the induction assumption to $Q$ to get a lattice $L_0 \subset W$. Let $C_0 = L_0 \cap Q$. Let $L 
\subset \BR^d$ be the lattice generated by all the vertices in $T^{-1}(C_0)$. Note that since $C_0$ has at least $d$ points, and since $T(v') = v$, and $v \in C_0$, we get that $L$ intersects $P$ in at least $d+1$ points. By construction, $L$ is a lattice as required.

\end{proof}

\nid We require a slightly more specialized version of Lemma \ref{polytopelattice} that allows some more control over the vertices of $P$ that belong to $L$. 

\begin{corollary} \label{polytopelattice2}
	Let $P \subset \BR^d$ be a convex polytope, one of whose vertices is the origin. Suppose we have two maps $T: \BR^d \to \BR^m$ and $S: \BR^d \to \ker T$ such that for every vertex $v$ of $T(P)$, the set $T^{-1}(v)$ consists of a single vertex and such that the only vertices in $\ker S$ are vertices of the above form. Then there exists a lattice $L$ as in Lemma \ref{polytopelattice} such that $L \cap V(P)$ contains at least $m+1$ vertices that project to vertices of $T(P)$.
	
\end{corollary}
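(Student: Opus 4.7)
The plan is to reduce Corollary~\ref{polytopelattice2} to Lemma~\ref{polytopelattice} by applying that lemma first to the smaller polytope $T(P) \subset \BR^m$, transporting the resulting lattice back to $\BR^d$ via the section $\sigma$ provided by the unique-preimage hypothesis, and then adapting the inductive construction of Lemma~\ref{polytopelattice} for $P$ to preserve the lifted vertices.

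First I would set up the section. Since $0 \in V(P)$ trivially lies in $\ker S$, the hypothesis forces $0 = T(0)$ to be a vertex of $T(P)$ with unique preimage $0$. Apply Lemma~\ref{polytopelattice} to the $m$-dimensional polytope $T(P) \subset \BR^m$ with distinguished vertex $0$ to obtain a translate $M$ of a lattice in $\BR^m$ with $0 \in M$, $|M \cap V(T(P))| \geq m+1$, and $m$-dimensional convex hull. For each $v \in M \cap V(T(P))$ let $w_v = \sigma(v) \in V(P) \cap \ker S$ be the unique vertex of $P$ mapping to $v$. Since $T$ is linear and the chosen $v$'s are affinely independent, the $w_v$'s are affinely independent vertices of $P$ lying in $\ker S$. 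Select from them a set $W = \{w_0 = 0, w_1, \ldots, w_m\}$ of $m+1$ affinely independent good vertices.

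Next I would build the lattice $L \subset \BR^d$ by running the inductive construction of Lemma~\ref{polytopelattice} on $P$ while carrying the constraint $W \subseteq L$ through the recursion. In the first inductive step, pick the face $F_1$ containing the distinguished vertex $0$ so that $F_1$ also contains the affine hull of $W$: this can be arranged by choosing the linear functional $\omega$ (in the notation of Lemma~\ref{polytopelattice}'s proof) to be constant on $\mathrm{aff}(W)$ and to attain its minimum on a face of $P$ containing $W$. Then choose the opposite face $F_2$ and the projection $x \mapsto x - \omega(x)v'$ exactly as in Lemma~\ref{polytopelattice}'s proof; this produces a $(d-1)$-dimensional polytope $Q = \tup{conv}(F_1 \cup T(F_2))$ in $\tup{span}(F_1)$ that still contains $W$ as $m+1$ affinely independent vertices. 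Induct on $Q$, treating $W$ as the collection of distinguished vertices to preserve, and pull back the resulting lower-dimensional lattice to $\BR^d$ by the same prescription used in Lemma~\ref{polytopelattice}'s proof. The output is a translate $L$ of a lattice in $\BR^d$ that contains $W$, meets $V(P)$ in at least $d+1$ vertices, and has $d$-dimensional convex hull; since each $w_i \in W$ projects by construction to $v_i \in V(T(P))$, the set $L \cap V(P)$ contains at least $m+1$ vertices projecting to vertices of $T(P)$, as required.

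The main obstacle is verifying at each inductive step that the added constraint $W \subseteq L$ is compatible with Lemma~\ref{polytopelattice}'s freedom to choose faces and projections. The key technical point is that $W$ being affinely independent of dimension $m$ within $\ker S$ leaves at least $d - m$ degrees of freedom at each step for the face and projection choices, which is exactly the amount of freedom the original Lemma~\ref{polytopelattice} proof uses to produce the remaining $d - m$ vertices in $L \cap V(P)$. This compatibility is verified by a bookkeeping argument paralleling the original lemma's proof, with $W$ playing the role of a collection of simultaneously preserved distinguished vertices in place of the single vertex $v$ considered there.
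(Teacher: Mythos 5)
Your approach diverges from the paper's after the first step, and the divergence introduces a genuine gap. The paper does not rerun the induction of Lemma \ref{polytopelattice} on $P$; instead it applies Lemma \ref{polytopelattice} \emph{twice}, once to $T(P)$ and once to $S(P)$, obtaining sets $C_T$ and $C_S$ of vertices in those lower-dimensional polytopes, and then simply takes $L$ to be the lattice generated by (i) the unique lifts $T^{-1}(C_T) \subset V(P)$ and (ii) one chosen vertex of $P$ in $S^{-1}(v)$ for each $0 \neq v \in C_S$. This is a direct one-shot construction; there is no constrained re-induction.

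The gap in your version is the claim in the first inductive step that one can choose the functional $\omega$ constant on $\mathrm{aff}(W)$ so that the corresponding face $F_1$ of $P$ contains all of $W$. The $m+1$ vertices in $W$ are the unique $T$-lifts of vertices of $T(P)$, and nothing in the hypotheses forces them to lie on a common facet of $P$; indeed $\mathrm{aff}(W)$ can pass through the interior of $P$. A concrete counterexample: let $P \subset \BR^3$ be the octahedron with vertices $(0,0,0)$, $(2,0,0)$, $(1,\pm 1,0)$, $(1,0,\pm 1)$, let $T(x,y,z)=x$ (so $m=1$, $T(P)=[0,2]$), and let $S(x,y,z)=(0,y,z)$. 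The hypotheses of the corollary hold, $W=\{(0,0,0),(2,0,0)\}$, but these two vertices are antipodal and lie on no common facet of $P$. So there is no $\omega$ of the required form, and the constrained induction cannot even start. The ``degrees of freedom'' bookkeeping in your last paragraph does not address this obstruction, since it is not a counting issue but a facial-structure issue. The paper's route of applying Lemma \ref{polytopelattice} separately to $T(P)$ and to $S(P)$ and adjoining generators sidesteps exactly this: it never needs $W$ (or anything else) to sit inside a single facet of $P$.
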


\begin{proof}
	Apply Lemma \ref{polytopelattice} to the polytopes $T(P)$ and $S(P)$. Let $C_S$ and $C_T$ be the corresponding sets of vertices. Let $L$ be the lattice generated by the following set. For every $0 \neq v \in C_S$, pick a single vertex in $S^{-1}(v)$, and then add all the vertices of the form $T^{-1}(C_T)$. This is a lattice as required. 
\end{proof}

\subsubsection{The positive vertices Lemma}
\begin{lemma} \tup{(Positive vertices Lemma)} \label{positivevertices} 
Suppose that every vertex of $\CS^e \varphi$ is stable. Then there exists a $k > 0$ such that for every vertex $v$ of $\CS^e \varphi$ with vertex matrix $A_v$: $\trace A_v^k = a_v  (kv)$ where $a_v >0$. 
\end{lemma}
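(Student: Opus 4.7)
The plan is first to reduce the claim to a positivity statement for the traces of powers of a single integer matrix attached to each vertex, and then to apply a simultaneous Kronecker recurrence argument to find one $k$ that works for every vertex at once.

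For the reduction, pick a linear functional that is uniquely maximized at $v$ on $\CS^e \varphi$ and apply Lemma~\ref{extremegraph}: every cycle $\gamma$ in $\CT_v$ satisfies $\textbf{t}_n(\gamma) = v$, so a cycle of length $k$ has $\textbf{t}(\gamma) = kv$ in $H_f$. Expanding $(A_v^k)_{ii}$ as a signed sum over based cycles at $e_i$ of length $k$ (using Observation~\ref{pathhomomorphism} so that both $\textbf{s}$ and $\textbf{t}$ are multiplicative under concatenation), every contribution is a multiple of the single group-ring monomial $kv \in \BC[H_f]$, giving
\[ \trace(A_v^k) \;=\; a_v(k) \cdot (kv), \qquad a_v(k) := \sum_\gamma \textbf{s}(\gamma) \in \BZ, \]
the sum running over all based cycles of length $k$ in $\CT_v$. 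Equivalently, $a_v(k) = \trace(M_v^k)$, where $M_v \in M_m(\BZ)$ is the signed adjacency matrix $(M_v)_{ij} = \sum_{\eta \in E_{ij}(v)} \textbf{s}(\eta)$, obtained from $A_v$ by evaluating every element of $H_f$ at $1$. Since $H_f$ is torsion free, $\BC[H_f]$ is an integral domain, and by Newton's identities a matrix over such a ring is nilpotent iff all traces of its powers vanish. Combined with the display above, the stability of $v$ (i.e., $A_v$ not nilpotent) is equivalent to $M_v$ having at least one nonzero eigenvalue.

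For the Kronecker step, let $r_v > 0$ be the spectral radius of $M_v$, write its dominant eigenvalues as $r_v e^{i\theta_{v,j}}$ for $j = 1, \ldots, s_v$, and bound the remaining eigenvalues in modulus by some $r_v' < r_v$. Then
\[ \frac{a_v(k)}{r_v^k} \;=\; \sum_{j=1}^{s_v} e^{ik\theta_{v,j}} \;+\; O\!\Bigl(\bigl(r_v'/r_v\bigr)^k\Bigr), \]
which is automatically real (as $a_v(k) \in \BZ$) and equals the positive integer $s_v$ at $k = 0$. Let $\Theta$ be the finite collection of all dominant angles $\theta_{v,j}$ as $v$ ranges over the (finitely many) vertices of $\CS^e \varphi$. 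Consider the $\BZ$-action on $\BT^{|\Theta|}$ by rotation by $(e^{i\theta})_{\theta \in \Theta}$; its orbit closure is a compact abelian subgroup containing the identity, so by Bohr/Kronecker recurrence the return times to any neighborhood of the identity form a syndetic subset of $\BN$. Picking such a $k$ inside a small enough return neighborhood, and also large enough that the $O\!\bigl((r_v'/r_v)^k\bigr)$ error is swamped for every vertex, yields $a_v(k) \geq \tfrac{1}{2} r_v^k s_v > 0$ for every vertex $v$ simultaneously, completing the proof.

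The one genuine obstacle is this simultaneity across vertices: a vertex-by-vertex Kronecker argument immediately gives a syndetic set of good $k$'s for each $v$, but these sets are defined by rotation-recurrence conditions on a priori disjoint collections of angles and need not intersect. Using the product-torus formulation of Kronecker (or equivalently, passing to the orbit closure in the full product) is exactly what forces the existence of a common $k$; the exponentially decaying non-dominant contributions are then routine to absorb.
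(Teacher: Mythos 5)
Your proof is correct and takes essentially the same route as the paper: both reduce $a_v(k)$ to the power sums $\sum_i \alpha_{v,i}^k$ of the eigenvalues of the integer sign-count matrix (the paper obtains these via Lemma~\ref{top} applied with $L = H_f$), and both find a common $k$ by a simultaneous torus-recurrence argument on the normalized eigenvalue angles. The only difference is in the bookkeeping: the paper annihilates the root-of-unity angles with a fixed power $M$ and makes every remaining $\mathfrak{Re}(\alpha_i^k)$ positive by density of the irrational-rotation orbit (so no error estimate is needed), whereas you split by dominant modulus and pair Bohr recurrence with a spectral-gap bound on the subdominant terms --- slightly heavier machinery for the same end, and equally valid.
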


\begin{proof}
By Lemma \ref{top}, for every $v$ there exists a collections of numbers $X_v = \{\alpha_1, \ldots, \alpha_r\}$ such that $\trace A_v^k = \sum \alpha_i^k$. Let $X = \bigcup_v X_v$. Let $$Y = \{\frac{\alpha}{|\alpha|} \mid 0 \neq \alpha \in X \}$$

\nid Enumerate the elements of $Y$:  $Y = \{\beta_1, \ldots, \beta_t\}$, and let $$\overline{y} = (\beta_1, \ldots, \beta_t) \in \BT^t = (S^1)^t$$ 

\nid There exists a number $M$ such that the $i^{th}$ component of $\overline{y}^M$ is $1$ for every $i$ where $\beta_i$ is a root of unity. Say $\beta_i = 1$ for $0 \leq i \leq r$. For every other $i$, the coordinate $\beta_i$ is not a root of unity. Thus, the set $\{\overline{y}^{Mj}\}_{j=1}^\infty$ is dense in the sub-torus $\BT^{t-r}$ obtained from $\BT^t$ by setting the first $r$ coordinates to $1$. 

The set of points in $\BT^{t-r}$ where all components have a positive real component is open. Thus, there exists a $j$ such that $\mathfrak{Re}(\beta_i^{Mj}) > 0$ for every $i$. Since each $\beta_i$ is of the form $\beta_i = \frac{\alpha_i}{|\alpha_i|}$, we get that $\mathfrak{Re}(\alpha_i^{Mj}) > 0$ for every $0 \neq \alpha \in X$. Set $k = Mj$

Since every vertex is stable, there is some $0 \neq \alpha \in X_v$ for every $v$. Since $\trace A_v^k = a_v(kv)$ for some integer $a_v$, we must have that $a_v > 0$ for every $v$, as required. 

\end{proof}

\subsubsection{The unbounded vertices in cyclic covers lemmas}
\begin{lemma} \tup{(Unbounded vertices for surface diffeomorphisms)} \label{unbounded} Let $f: \Sigma \to \Sigma$ be a pseudo-Anosov mapping class. Let $b$ be the number of boundary components of $\Sigma$. For ever prime $p$ there exists a finite cyclic cover $\pi: \wt{\Sigma} \to \Sigma$ of degree $p$ with $b$ boundary components and a lift $g$ of some power of $f$ to $\wt{\Sigma}$ such that $g_*: H_1(\wt{\Sigma}, \BR) \to H_1(\wt{\Sigma}, \BR)$ has eigenvalues off of the unit circle, or: $$ \#V(\CS^e g) \geq p - b -1 $$

\end{lemma}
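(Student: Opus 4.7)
The plan is to exploit a cyclic $p$-cover to force the rank of $H_g$ to grow linearly in $p$, which in turn forces $\CS^e g$ to be a high-dimensional polytope with many vertices. First, choose a homomorphism $\phi \colon H_1(\Sigma,\BZ) \to \BZ/p\BZ$ that is nonzero on every boundary cycle $[\partial_i]$; such a $\phi$ exists whenever $b \geq 2$ and $p$ avoids a small set of exceptional primes (one needs $\sum_i \phi([\partial_i]) = 0$ in $\BF_p$ to be solvable with each summand nonzero). The associated degree-$p$ cyclic cover $\pi \colon \wt\Sigma \to \Sigma$ then has exactly $b$ boundary components. Since $f_*$ acts with finite order on the finite set $\Hom(H_1(\Sigma),\BF_p)$, after replacing $f$ by a power we may take $\phi \circ f_* = \phi$ and lift $f$ to $g_0 \colon \wt\Sigma \to \wt\Sigma$. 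Conjugation of the deck group $\langle \tau \rangle \cong \BZ/p$ by $g_0$ gives an automorphism of $\BZ/p$; since $\tup{Aut}(\BZ/p) \cong \BF_p^\times$ has order $p-1$, replacing $g_0$ by $g_0^{p-1}$ yields a lift $g$ that commutes with $\tau$.

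Next, perform the dichotomy that drives the ``or''. If $g_* \colon H_1(\wt\Sigma, \BZ) \to H_1(\wt\Sigma, \BZ)$ has an eigenvalue off the unit circle, the first alternative holds and we are done. Otherwise every eigenvalue of $g_*$ lies on the unit circle; since the characteristic polynomial has integer coefficients, Kronecker's theorem forces every eigenvalue to be a root of unity, and replacing $g$ by a further power (which still commutes with $\tau$) we may assume every eigenvalue of $g_*$ equals $1$. In particular $g_* - I$ is nilpotent on $H_1(\wt\Sigma, \BC)$.

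Because $g_*$ commutes with $\tau_*$, the eigenspace decomposition
\[
H_1(\wt\Sigma, \BC) = \bigoplus_{\zeta^p = 1} H_1(\wt\Sigma, \BC)_\zeta
\]
is $g_*$-invariant, and $(g_*-I)$ is nilpotent on each summand. A nilpotent operator on a nonzero space has a nonzero kernel, so each nonzero $\tau_*$-eigenspace contributes at least $1$ to $\dim \ker(I - g_*)$. Transfer gives $H_1(\wt\Sigma,\BC)_1 \cong H_1(\Sigma,\BC)$, of dimension $2g_\Sigma + b - 1$; using $\chi(\wt\Sigma) = p\chi(\Sigma)$ and the fact that $\wt\Sigma$ has boundary (so $\pi_1(\wt\Sigma)$ is free of rank $1-\chi$), one computes $\dim H_1(\wt\Sigma,\BC) = p(2g_\Sigma + b - 2) + 1$, and by Galois symmetry each nontrivial $\tau_*$-eigenspace has dimension $2g_\Sigma + b - 2$. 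Since $\chi(\Sigma) < 0$ for a pseudo-Anosov we have $2g_\Sigma + b - 2 \geq 1$, so all $p$ eigenspaces are nonzero and $\rank H_g = \dim \ker(I - g_*) \otimes \BQ \geq p$.

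Applying Lemma \ref{dimshad} (valid because $\wt\Sigma$ has exactly $b$ boundary components and $g$ is pseudo-Anosov as a lift of a pseudo-Anosov) yields $\dim \CS^e g = \rank H_g + 1 - b \geq p + 1 - b$, and any convex polytope of dimension $d$ has at least $d+1$ vertices, so $\#V(\CS^e g) \geq p + 2 - b > p - b - 1$, as required. The main obstacle I anticipate is the case $b=1$, in which every homomorphism $H_1(\Sigma) \to \BZ/p$ vanishes on the unique (nullhomologous) boundary, so no cover of degree $p > 1$ has only $b=1$ boundary components; either the lemma is implicitly stated for $b \geq 2$, or the $b=1$ case is handled by a separate construction. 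Once the cover is in hand, the argument is essentially dimension bookkeeping together with the standard algebraic-integer input from Kronecker's theorem.
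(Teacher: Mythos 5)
Your proof is correct and follows essentially the same strategy as the paper: construct a degree-$p$ cyclic cover via a homomorphism to $\BZ/p\BZ$ that keeps the boundary count at $b$, pass to a power of $f$ that lifts and commutes with the deck group, reduce by Kronecker to the unipotent case, count the dimension of $\ker(g_*-I)$ by decomposing $H_1(\widetilde\Sigma,\BC)$ under the deck group and using Galois symmetry, and finally apply Lemma~\ref{dimshad}. The only mechanical difference is that you work directly with the $p$ complex $\tau_*$-eigenspaces and the Euler-characteristic/transfer dimension count, whereas the paper works over $\BR$ with the Jordan filtration $V_j=\ker(g_*-I)^j$ and divisibility of $\dim(V_j\cap W)$ by $p-1$; these are two packagings of the same representation-theoretic bookkeeping, and yours is if anything a bit cleaner and yields the marginally sharper bound $\dim\ker(g_*-I)\ge p$ rather than $p-1$.

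Your observation about $b=1$ is the most valuable part of the write-up, and it points to a genuine issue. The paper's proof asks for $\psi:\pi_1(\Sigma)\to\BZ/p\BZ$ sending \emph{every} boundary component to $1$; since the boundary classes sum to $0$ in $H_1(\Sigma;\BZ)$, this forces $p\mid b$ and so cannot hold for every prime $p$. The correct requirement is, as you say, that each boundary map to some nonzero element with the values summing to zero, which is achievable for $b\ge 2$ once $p$ avoids a finite set of primes. For $b=1$ the unique boundary class is nullhomologous, so no degree-$p>1$ cyclic cover has one boundary component, and the lemma as stated cannot hold with the first alternative unavailable; the paper does not address this, and the gap propagates into the use of Lemma~\ref{unbounded} in Proposition~\ref{pseudoanosov}, where keeping the boundary count bounded is needed for the iteration. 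So you should not be troubled by the anticipated obstacle: it is not a defect of your argument relative to the paper's, but a defect shared by both, which you have correctly diagnosed.
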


\begin{proof}
	
	Choose a prime $p > 0$, and a homomorphism $\psi: \pi_1(\Sigma) \to \BZ/p\BZ$ such that every boundary component of $\Sigma$ is sent to $1 \in \BZ / p\BZ$. Let $\wt{\Sigma}$ be the cover corresponding to $\ker \psi$. Some power of $f$ lifts to the cover $\wt{\Sigma}$. By replacing with a further cover, we may assume that it commutes with the deck group. If its action on $H_1(\wt{\Sigma};\BR)$ does not have eigenvalues off of the unit circle, we can replace it with a further power such that all of the eigenvalues are $1$. By replacing $f$ with the relevant power, we may assume that all of the eigenvalues of  $f_*: H_1(\wt{\Sigma}; \BR) \to H_1(\wt{\Sigma};\BR)$ are also $1$. Call this lift $g$. By construction, $\wt{\Sigma}$ has the same number of boundary components as $\Sigma$. 
	
	Form the mapping torus $M_g$. The operator $g_*$ commutes with the action of the deck group on $H_1(\wt{\Sigma},\BR)$. Call this deck group $\calD$. For every $j$, the space $V_j = \ker(g_* - I)^j$ is $\calD$ invariant. The number $\dim V_{j+1} - \dim V_j$  is the number of Jordan block in the Jordan normal form of $g_*$ of size  greater than $j$. 
	
	Let $W = \ ker (H_1(\wt{\Sigma}, \BR) \to H_1(\Sigma,\BR)$. The spaces $V_j \cap W$ are all $\calD$-invariant, and have dimension divisible by $p-1$. Some of the spaces $V_j$ must intersect $W$ non-trivially by assumption. If $\dim (V_j \cap W) = \dim (V_{j+1} \cap W)$, then $W$ must contain $1$-eigenvectors. Thus, there must be at least $p-1$ such eigenvectors. If $\dim (V_j \cap W) < \dim (V_{j+1} \cap W)$ for some $j$, then 
	$\dim (V_{j+1} \cap W) - \dim (V_j \cap W)$ is divisible by $p$, and there must be at least $p$ Jordan blocks. Thus, we have that the multiplicity of $1$ as an eigenvalue of $g_*$ must be at least $p-1$. This proves the result by Lemma  \ref{dimshad}.

\end{proof}

\begin{lemma} \tup{(Unbounded vertices for fully irreducible autoharphisms)} \label{unbounded2} Let $f \in \tup{Aut}(F_n)$ be a train track representative of a fully irreducible automorphism. For ever prime $p$ there exists a finite cyclic cover $\pi: \wt{\Sigma} \to \Sigma$ of degree $p$  and a lift $g$ of some power of $f$ to $\wt{\Sigma}$ such that $g_*: H_1(\wt{\Sigma}, \BR) \to H_1(\wt{\Sigma}, \BR)$ has eigenvalues off of the unit circle, or: $$ \#V(\CS^e g) \geq p -1 $$

\end{lemma}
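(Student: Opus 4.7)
The plan is to mirror the argument of Lemma \ref{unbounded}, with two simplifications afforded by the $\tup{Out}(F_n)$ setting: no boundary components require attention, and Lemma \ref{dimshad} gives $\dim \CS^e g = \dim H_f \otimes \BR$ with no correction term. First, pick any surjection $\psi: F_n \to \BZ/p\BZ$ and let $\pi: \wt\Gamma \to \Gamma$ be the corresponding cyclic cover of degree $p$. A power of $f$ stabilizes $\ker \psi$ and lifts to a self-map of $\wt\Gamma$; passing to a further power produces a lift $g$ commuting with the deck group $\calD \cong \BZ / p\BZ$. If $g_*$ on $H_1(\wt\Gamma;\BR)$ has an eigenvalue off the unit circle, we are done. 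Otherwise all eigenvalues lie on the unit circle, and since $g_*$ is defined over $\BZ$ they are roots of unity, so a further power (still commuting with $\calD$) makes every eigenvalue equal to $1$.

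Next, set $W = \ker\bigl(\pi_*: H_1(\wt\Gamma;\BR) \to H_1(\Gamma;\BR)\bigr)$. The transfer map shows $\pi_*$ is surjective, and an Euler characteristic calculation gives $\tup{rank}(\pi_1(\wt\Gamma)) = p(n-1) + 1 > n$ for $n \geq 2$, so $W \neq 0$. Since $\calD$ acts trivially on $H_1(\Gamma;\BR)$, the $\calD$-module $W$ contains no trivial summand. The only nontrivial $\BQ$-irreducible $\BQ[\BZ / p\BZ]$-module is $\BQ(\zeta_p)$, of $\BQ$-dimension $p-1$, so $W$ is a direct sum of copies of $\BQ(\zeta_p)$; in particular $\dim_\BR W$ is divisible by $p-1$, and $W$ carries a natural $\BQ(\zeta_p)$-vector space structure. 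Because $g$ commutes with $\calD$, the restriction $g_*|_W$ is a $\BQ(\zeta_p)$-linear endomorphism, and all of its $\BQ(\zeta_p)$-eigenvalues are still $1$. A nonzero $\BQ(\zeta_p)$-linear map with every eigenvalue equal to $1$ has $\BQ(\zeta_p)$-kernel of dimension at least $1$, hence $\BR$-kernel of dimension at least $p-1$. This yields $\dim \ker(g_* - I) \geq p - 1$.

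Finally, by Lemma \ref{dimshad} in the fully irreducible case, $\dim \CS^e g = \dim H_g \otimes \BR$, and $H_g \cong \tup{coker}(I - g_*)$ has the same rank as $\ker(I - g_*)$, so $\dim \CS^e g \geq p - 1$. Every $d$-dimensional convex polytope has at least $d+1$ vertices, giving $\#V(\CS^e g) \geq p$, which is stronger than the claimed bound of $p-1$. The one step that demands genuine thought beyond copying Lemma \ref{unbounded} is the passage to the $\BQ(\zeta_p)$-module viewpoint on $W$: this cleanly packages the divisibility by $p-1$ together with the Jordan block counting done in the surface case, and it is the reason the lift is arranged at the outset to commute with the deck group.
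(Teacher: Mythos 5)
Your proposal is correct and follows essentially the same route as the paper: the paper's proof of Lemma~\ref{unbounded2} simply says to repeat the argument of Lemma~\ref{unbounded} (the surface case) word for word and then invoke the fully-irreducible part of Lemma~\ref{dimshad}, which is exactly what you do. The one place you genuinely depart from the paper's presentation is the central counting step. The paper argues via the filtration $V_j = \ker(g_* - I)^j$ and a somewhat awkward dichotomy on whether $\dim(V_j \cap W)$ stabilizes, to conclude that $g_*$ has at least $p-1$ Jordan blocks with eigenvalue $1$. You instead observe that $W = \ker \pi_*$, being a $\calD$-module with no trivial summand (the trivial isotypic piece is split off by the transfer), is naturally a $\BQ(\zeta_p)$-vector space on which $g_*|_W$ acts $\BQ(\zeta_p)$-linearly; since the unique $\BQ(\zeta_p)$-eigenvalue is $1$, the eigenspace $\ker(g_*|_W - I)$ is a nonzero $\BQ(\zeta_p)$-subspace and hence has $\BR$-dimension at least $p-1$. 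This is the same underlying fact, but the $\BQ(\zeta_p)$-module packaging is cleaner and makes the divisibility by $p-1$ transparent rather than emerging from a case analysis. You also make explicit two points the paper leaves implicit: the Euler characteristic computation $\dim W = (p-1)(n-1) > 0$ showing $W \neq 0$, and the identity $\rank\,\tup{coker}(I - g_*) = \dim\ker(I - g_*)$ needed to pass from the eigenspace bound to $\dim H_g \otimes \BR$. The final step, $\#V(\CS^e g) \geq \dim \CS^e g + 1 \geq p$, is fine and (as you note) slightly stronger than the stated bound $p-1$; the paper's bound is similarly loose in Lemma~\ref{unbounded}.
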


\begin{proof}
	
	Choose a prime $p > 0$, and a non-trivial homomorphism $\psi: \pi_1(\Sigma) \to \BZ/p\BZ$ . Let $\wt{\Gamma}$ be the cover corresponding to $\ker \psi$. Some power of $f$ lifts to the cover $\wt{\Gamma}$. By replacing with a further cover, we may assume that it commutes with the deck group. If its action on $H_1(\wt{\Gamma};\BR)$ does not have eigenvalues off of the unit circle, we can replace it with a further power such that all of the eigenvalues are $1$. By replacing $f$ with the relevant power, we may assume that all of the eigenvalues of  $f_*: H_1(\wt{\Gamma};\BR) ]\to H_1(\Sigma,\BR)$ are also $1$. Call this lift $g$. By construction, $\wt{\Sigma}$ has the same number of boundary components as $\Sigma$. Form the mapping torus $M_g$. The proof now proceeds exactly as in the previous lemma, concluding by using the fully irreducible automorphism part of Lemma  \ref{dimshad}.

\end{proof}

\subsection{Completing the proofs}
We begin by noting that in the proof of Theorem \ref{theorem3}, it is enough to prove the theorem for some power of $f$. Indeed, suppose $K \lhd F_n$ is a $f^k$-invariant subgroup for some $k$ such that $f^k_*: H_1(K; \BC) \to H_1(K;\BZ)$ has eigenvalues off of the unit circle. Let $K' = \bigcap \alpha(K)$, where the intersection is taken over all automorphisms $\alpha \in \tup{Aut}(F_n)$. The group $K'$ is characteristic in $F_n$, and if $F_n/K$ is solvable then so is $F_n/K'$. Since $K'$ is characteristic, $f(K') = K'$. 

The transfer map transfer map $T: H_1(K;\BC) \to H_1(k';\BC)$ is $f^k$-equivariant, and thus $f^k_*: H_1(K';\BC) \to H_1(K';\BC)$ has eigenvalues off of the unit circle. The same must then hold for $f$.

\nid We now prove Theorem \ref{theorem3} by dividing it into two cases.

\begin{prop} \label{pseudoanosov} \tup{(The pseudo Anosov case)}
Let $n \geq 2$ and let $\overline{f} \in \tup{Out}(F_n)$ be the image of a pseduo-Anosov mapping class. Let $f$ be a train track representative of $\overline{f}$. Then there exists a finite index subgroup $K \lhd F_n$ such that $f(K) = K$, $f_*: H_1(K;\BZ) \to H_1(F_n; \BZ)$ has eigenvalues off of the unit circle. Furthermore, we can choose $K$ such that $F_n/K$ is solvable.  

\end{prop}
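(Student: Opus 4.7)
The plan is to combine Lemma~\ref{unbounded} (to grow the equivariant shadow), Lemma~\ref{stabilization} (to remove cancellations at vertices), and Lemma~\ref{multiplicity} (to force $p$-divisibility of the surviving coefficients), and then invoke the anchoring lemma (Lemma~\ref{anchoring}) to produce the required abelian cover with eigenvalues off the unit circle.

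First I would reduce to the case where every eigenvalue of $f_*:H_1(F_n;\BQ)\to H_1(F_n;\BQ)$ is $1$: if not, $K=F_n$ itself already works. Fix a prime $p$ (to be chosen large at the end, in terms of the number $m=|E(\Gamma)|$ of edges of the train track and the number $b$ of boundary components of $\Sigma$). Apply Lemma~\ref{unbounded} to obtain a cyclic cover $\pi_p:\widetilde\Sigma\to\Sigma$ of degree $p$ and a lift $g$ of some power of $f$ to $\widetilde\Sigma$. If $g_*$ already has eigenvalues off the unit circle we are done; otherwise, as noted in the proof of Lemma~\ref{unbounded}, the $1$-eigenspace of $g_*$ has dimension at least $p-1$, so Lemma~\ref{dimshad} gives $\dim\CS^e g\geq p-b$ and $|V(\CS^e g)|\geq p-b-1$.

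Next, apply Lemma~\ref{stabilization} to each of the (finitely many) vertices of $\CS^e g$, and take a common refinement of the resulting solvable covers to obtain a solvable cover $\pi':\widetilde{\widetilde\Sigma}\to\widetilde\Sigma$ with a lift $\widetilde g$ of some power of $g$ in which every (suitably rescaled) vertex of $\CS^e g$ becomes a stable vertex of $\CS^e\widetilde g$. By Lemma~\ref{positivevertices}, after replacing $\widetilde g$ by a further power, the coefficient $a_v$ of $kv$ in $\trace A_{\widetilde g}^k$ is a strictly positive integer at every such vertex $v$. Since the composite cover $\pi'\circ\pi_p$ factors through the prime cyclic cover $\pi_p$, Lemma~\ref{multiplicity} forces $p\mid a_v$ (choosing the homomorphism defining $\pi_p$ generically so that every vertex of the original shadow has nonzero image in $\BZ/p\BZ$), and hence $a_v\geq p$ for every vertex.

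Finally, apply Lemma~\ref{polytopelattice} to the polytope $k\,\CS^e\widetilde g$ to produce a translated lattice $L$ that intersects this polytope in at least $d+1\geq p-b+1$ of its vertices, and only there. The trace at $L$ is then bounded below by $p(p-b+1)$, being a sum of positive integers each at least $p$. For $p$ chosen large enough that $p(p-b+1)$ exceeds the matrix size $|E(\widetilde{\widetilde\Sigma})|$, the anchoring lemma (Lemma~\ref{anchoring}) produces an abelian cover of $\widetilde{\widetilde\Sigma}$ on which the lifted map has eigenvalues off the unit circle. Composing with the preceding solvable covers gives a solvable cover of $\Sigma$, and the characteristic-closure argument from the opening of this section upgrades the resulting finite-index subgroup to a characteristic $K\lhd F_n$ with $F_n/K$ solvable.

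The main obstacle I anticipate is controlling the degree of the stabilization cover $\pi'$ as $p$ grows: the number of vertices of $\CS^e g$ is on the order of $p$, and a naive common refinement of the solvable covers produced by Lemma~\ref{stabilization} could have degree growing much faster than $p$, which would swamp the trace bound $p(p-b+1)$. I expect Lemma~\ref{cyclic} (cyclic deformation) to be the key tool here: it should allow the cyclic primes used in the nilpotent stabilization of each vertex to be chosen compatibly, so that the degree of $\pi'$ is polynomially bounded in $p$ and the inequality $p(p-b+1)>|E(\widetilde{\widetilde\Sigma})|$ remains achievable for some large prime $p$.
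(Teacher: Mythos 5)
Your proposal is structured around stabilizing \emph{every} vertex of $\CS^e g$ in the degree-$p$ cyclic cover, then trying to win in a single shot. This does not work, and the obstacle you flag at the end is the fatal one, not a technicality that Lemma~\ref{cyclic} repairs. The shadow $\CS^e g$ has dimension on the order of $p$, and a convex polytope of dimension $d$ can have a number of vertices exponential in $d$. A common refinement of the stabilization covers produced by Lemma~\ref{stabilization} over all these vertices therefore has degree that can grow much faster than any polynomial in $p$, so $|E(\widetilde{\widetilde\Sigma})| = m\,p\,\deg(\pi')$ dominates $p(p-b+1)$ for every choice of $p$. Lemma~\ref{cyclic} does not fix this: it supplies infinitely many admissible primes for a \emph{single} weak vertex subgraph that is already known to stabilize in some prime cyclic cover, but it gives no simultaneous control of the degree over a $p$-dependent family of vertices. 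In fact the paper never tries to stabilize all vertices at once; if even one vertex is unstable (or degenerates to a face in some cyclic cover) it is treated as a \emph{win condition}: one passes through a bounded intermediate cover $\Sigma''$ where $v$ is not stable, applies Lemma~\ref{cyclic} to produce a $p$-cover where $v$ becomes stable, and then the support of $\trace A_v[\pi_p]^k$ contains a full $p$-orbit with coefficients divisible by $p$ (Lemma~\ref{multiplicity}), giving $\|\trace A_v[\pi_p]^k\|_2 \geq p^{3/2}$ against a dimension bounded by $Mp\dim H_1(\Sigma;\BR)$; the $L^2$-trace lemma then finishes. Only in the complementary case, where every vertex is stable and remains a vertex under cyclic covers, is the polytope/lattice argument used, and then only as one step of an \emph{iteration} on the ratio $\gamma(\pi) = \sup_{k,L}\trace A[\pi]^k(L)/d_\pi$, which increases by nearly $1$ at each stage and eventually exceeds $1$, after which the anchoring lemma applies.

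There is a second, smaller error: Lemma~\ref{multiplicity} does not give $p\mid a_v$ for every vertex $v$ of $\CS^e\widetilde g$. That lemma concerns a vertex $u$ of the \emph{base} shadow $\CS^e\varphi$ whose image in $\BZ/p\BZ$ is nonzero, and it states that $\trace(A_u[\pi_p]^k)$ is $p$-divisible as an element of the group ring. The ``new'' vertices of $\CS^e g$ arising from the enlarged homology of the cover are not lifts of base vertices, and no $p$-divisibility is asserted or available for them. In the paper's iterative step, $p$-divisibility comes from Lemma~\ref{invtr} (invariance of trace under the deck group) and applies \emph{only} to the $\dim\CS^e f + 1$ vertices that are lifts of vertices of $\CS^e f$; all other vertices in the lattice intersection contribute only $\geq 1$. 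That weaker count is exactly why the paper settles for the additive gain $\gamma(\pi_p) \geq \gamma_1 + \frac{p-b-\dim\CS^e f}{p}$ and must iterate rather than winning in one step as you propose.
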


\begin{proof}

If $f_*: H_1(\Sigma) \to H_1(\Sigma)$ has eigenvalues off of the unit circle we are done. If not, replace $f$ with a power of itself so that all of the eigenvalues of $f_*$ are $1$.

If $\CS^e \varphi$ has a non-stable vertex $v$, then by the stabilization Lemma (\ref{stabilization}), there exists a solvable cover $\pi: \Sigma' \to \Sigma$ to which $f$ lifts such that $v$ is stable in $\pi$. Since every solvable cover is the intersection of cyclic covers of prime order, we can find an intermediate cover $\Sigma' \to \Sigma'' \to \Sigma$ to which $f$ lifts such that $v$ is not stable in  the cover $\Sigma''$ and $\Sigma'$ is a cyclic cover of $\Sigma''$ of prime order. By the cyclic deformation lemma (\ref{cyclic}), there exist infinitely many primes $p$ and cyclic $p$-covers of $\Sigma''$ where $v$ is stable. Let $\pi_p: \Sigma_p \to \Sigma$ be such a cover corresponding to the prime $p$.

Since $v$ is not stable in $\Sigma''$, and by Observation \ref{invtr}, the support of $\trace A_v[\pi_p]^k$ contains an orbit of size $p$ for some $k$. By the cyclic multiplicity lemma (\ref{multiplicity}) the coefficient of every point in the support is divisible by $p$. Thus, the $L^2$ norm of $\trace A[\pi_p]^k$ is at least $\sqrt{p \cdot p^2} = p^{\frac{3}{2}}$. Suppose $\Sigma''$ is a cover of degree $M$. Then $\dim H_1(\Sigma_p; \BR) \leq Mp \dim H_1(\Sigma; \BR)$. In particular, if we take $p \gg 0$, we get that the $L^2$ norm of $\trace A[\pi_p]^k$ is greater than the first betti number of the corresponding mapping torus. This concludes the proof by the $L^2$ trace lemma (\ref{l2trace}). 

The same reasoning holds if $v$ is stable in the cover $\pi': \Sigma' \to \Sigma$, $\pi_p: \Sigma_p \to \Sigma'$ is a cyclic $p$ cover for $p \gg 0$ and the lift of $\CT_v$ to $\pi_p$ is a face graph that is not a vertex graph (because, once again, the trace will contain an orbit of size $p$).

For any cover $\pi: \Sigma' \to \Sigma$ to which $f$ lifts, let $d_\pi$ be the first betti number of the mapping torus, and let $$\gamma(\pi) = \frac{\sup_{k,L} \trace A[\pi]^k(L)}{d_\pi} $$

\nid where the supremum is taken over all integers $k$ and lattices $L$.

Let $\gamma_1$ be $\gamma$ of the trivial cover. If $\CS^e \varphi$ has any non-stable vertices we are done. If we can find a vertex $v$ such that for some sufficiently large prime $p$ there is cyclic cover $\pi_p$ such that $\CT_v[\pi_p]$ is not a vertex subgraph, we are also done. Otherwise, by the unbounded vertices for surface diffeomorphisms Lemma (\ref{unbounded}) we can find a prime $p \gg 0$ and a cyclic $p$-cover $\pi_p: \Sigma_p \to \Sigma$ to which $f$ lifts to a map $f_p$ such that $\CS^e f_p$ is at least $p - 1 - b$ dimensional, where $b$ is the number of boundary components of $\Sigma$. Furthermore, $\Sigma_p$ has the same number of boundary components as $\Sigma$. 

Every vertex of $\CS^e f$ is stable and lifts to a vertex of $\CS^e$. As before, if any of the vertices of $\CS^e f_p$ is not stable, we are done. Otherwise, by the positive vertices lemma (\ref{positivevertices}) we can replace $f$ with a power $f^k$ such that the coefficient of each vertex in $A[\pi_p]^k$ is positive. By observation \ref{invtr}, the coefficient of each vertex that is a lift from a vertex in $\Sigma$ is divisible by $p$. The coefficient of every other vertex is at least $1$. 

By the polytope/lattice Lemma and its corollary (\ref{polytopelattice} and \ref{polytopelattice2}) we can find a lattice $L$ such that $\trace A[\pi_p]^k \cap L$ consists of a set of at least $p-b$ vertices, $\dim \CS^e f + 1$ of which are lifts of vertices of $\CS^e f$. Thus, 

$$\gamma(\pi_p) \geq \frac{1}{p}\trace A[\pi_p]^k(L) \geq \gamma_1 + \frac{p-b - \dim(\CS^e f)}{p}$$ 

By taking $p \gg 0$, this can be made arbitrarily close to $\gamma_1 + 1$. Repeating the same argument for $\pi_p$. If it has unstable vertices, or vertices that lift to faces in cyclic covers then we are done. Otherwise, for any $\epsilon > 0$ we can find $p' \gg p$ and a cover $\pi_{p'}$ of $\Sigma_p$ such that $\gamma(\pi_{p''}) \geq \gamma_1 + 2 - \epsilon$. Iterating this process, we see that that the set $\gamma(\pi)$ is unbounded over all solvable covers $\pi$. This concludes the proof by the anchoring lemma (\ref{anchoring}).

\end{proof}

\begin{prop} \tup{(The fully irreducible case)}
Let $n \geq 2$ and let $\overline{f} \in \tup{Out}(F_n)$ be fully irreducible. Let $f$ be a train track representative of $\overline{f}$. Then there exists a finite index subgroup $K \lhd F_n$ such that $f(K) = K$, $f_*: H_1(K;\BZ) \to H_1(F_n; \BZ)$ has eigenvalues off of the unit circle. Furthermore, we can choose $K$ such that $F_n/K$ is solvable.  

\end{prop}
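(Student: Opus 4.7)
The plan is to adapt the proof of Proposition \ref{pseudoanosov} almost verbatim, replacing each appeal to the pseudo-Anosov surface-specific ingredient with its fully irreducible $\tup{Out}(F_n)$ counterpart. As before, I would first dispense with the trivial case: if $f_*: H_1(F_n;\BZ) \to H_1(F_n;\BZ)$ already has an eigenvalue off the unit circle we take $K = F_n$; otherwise, replace $f$ by a power so that all eigenvalues are $1$, which makes $H_f$ non-trivial and, by the fully irreducible clause of Lemma \ref{dimshad}, gives $\dim \CS^e \varphi = \dim H_f \otimes \BR$. Note that because there is no boundary term $(1 - b)$, the dimension bounds will be slightly sharper here than in the pseudo-Anosov case.

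Next I would carry out the same case analysis. If $\CS^e \varphi$ has an unstable vertex $v$, then Lemma \ref{stabilization} produces a solvable cover in which $v$ becomes stable; routing through the final cyclic prime-order step of that tower and invoking the cyclic deformation lemma (\ref{cyclic}), I obtain, for arbitrarily large primes $p$, a cyclic $p$-cover $\pi_p$ where $v$ is stable. The cyclic cover multiplicity lemma (\ref{multiplicity}) then forces every coefficient of $\trace A[\pi_p]^k$ at a point in the support to be divisible by $p$, while the support has size at least $p$, so $\|\trace A[\pi_p]^k\|_2 \geq p^{3/2}$ for an appropriate $k$. Since the first Betti number of the mapping torus of the lift grows only linearly in $p$, the $L^2$-trace lemma (\ref{l2trace}) yields the required eigenvalue in a solvable cover.

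The main case is when every vertex of $\CS^e \varphi$ is stable and every vertex lifts to a vertex in every cyclic cover. Here I would invoke Lemma \ref{unbounded2} in place of Lemma \ref{unbounded}: for $p \gg 0$ this produces a cyclic $p$-cover $\pi_p$ and a lift $g$ of a power of $f$ with $\#V(\CS^e g) \geq p - 1$, and the fully irreducible form of Lemma \ref{dimshad} means there is no deficit from boundary components to worry about. Applying the positive vertices lemma (\ref{positivevertices}), I arrange that every vertex contributes positively to some $\trace A[\pi_p]^k$; by invariance of trace (\ref{invtr}) each vertex that is the lift of an older one contributes with multiplicity divisible by $p$, while the $\Omega(p)$ ``new'' vertices contribute with multiplicity at least $1$. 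The polytope/lattice lemma (\ref{polytopelattice}) and its Corollary \ref{polytopelattice2} then give a lattice $L$ through a prescribed vertex so that $\trace A[\pi_p]^k(L)$ collects many of these contributions, producing an anchoring ratio $\gamma(\pi_p) \geq \gamma_1 + 1 - o(1)$. Iterating on $\pi_p$ itself, either an unstable vertex or a non-vertex face appears (and the previous case closes the argument), or $\gamma$ increases by nearly $1$ again; either way the sup of $\gamma$ over solvable covers is infinite, and the anchoring lemma (\ref{anchoring}) finishes the proof. The only genuine obstacle is the same bookkeeping as in Proposition \ref{pseudoanosov}, namely verifying that the tower of cyclic covers produced along the way stays solvable and that Corollary \ref{polytopelattice2} can indeed be applied with enough new vertices in $\ker T$ to give the desired $+1$ gain; both of these go through exactly as before since the fully irreducible hypothesis only improves the dimension count.
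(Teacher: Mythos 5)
Your proposal is correct and follows exactly the approach the paper takes: the paper's proof of the fully irreducible case simply says it is nearly identical to the pseudo-Anosov case (Proposition \ref{pseudoanosov}), with Lemma \ref{unbounded2} replacing Lemma \ref{unbounded}, which is precisely the substitution you carry out. Your write-up fills in the details that the paper leaves implicit, but there is no deviation in strategy.
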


\begin{proof}
The proof here is nearly identical to the proof of Proposition \ref{pseudoanosov} except that instead of using the unbounded vertices lemma for surface diffeomorphisms, we use the unbounded vertices for fully irreducible autoharphisms Lemma (\ref{unbounded2}).

\end{proof}
\nid We now need only to deduce Theorem \ref{theorem1} from Theorem \ref{theorem3}.
\begin{proof}
To conclude the proof of Theorem \ref{theorem1}, we now need only address the case that $f$ has positive topological entropy but is not a pseudo-Anosov mapping class. By the Nielsen thurston classification, we can replace $f$ with a power of itself such that there exists a subsurface $\Sigma' \subset \Sigma$ such that $\Sigma'$ is $f$-invariant and $f$ restricted to $\Sigma'$ is a pseudo-Anosov mapping class.  Pick a base point $* \in \Sigma'$. 

By a theorem of Marshall Hall (see \cite{Hall}), there exists a finite index subgroup $K < \pi_1(\Sigma, *)$ such that $\pi_1(\Sigma',*) \leq K$, and $\pi_1(\Sigma',*)$ is a free subfactor of $K$. Replace $f$ with a power that fixes the subgroup $K$. The theorem now follows from applying Theorem \ref{theorem3} to $f|_{K'}$, and noting that $H_1(\Sigma'; \BC)$ injects into $H_1(K;\BC)$.

\end{proof}

\end{document}